\documentclass[12pt,twosided]{scrartcl}

\usepackage[utf8]{inputenc}
\usepackage{graphicx}
\usepackage[all]{xy}
\usepackage{amssymb}
\usepackage{amsmath}
\usepackage{verbatim}
\usepackage{latexsym}
\usepackage{mathrsfs}
\usepackage{enumerate}
\usepackage{amsthm}

\makeindex

\newcommand{\aval}[1]{\left] #1 \right[} 
\newcommand{\sval}[1]{\left[ #1 \right]} 

\renewcommand{\Cup}{\bigcup}
\renewcommand{\Cap}{\bigcap}


\renewcommand{\b}{\beta}
\newcommand{\g}{\gamma}
\newcommand{\e}{\varepsilon}
\renewcommand{\d}{\delta}

\renewcommand{\l}{\lambda}

\newcommand{\s}{\sigma}


\newcommand{\es}{\varnothing}

\newcommand{\A}{\mathcal{A}}
\newcommand{\B}{\mathcal{B}}

\newcommand{\K}{\mathcal{K}}

\newcommand{\LO}{{\operatorname{LO}}}

\newcommand{\N}{\mathbb{N}} 
\newcommand{\Q}{\mathbb{Q}} 
\newcommand{\R}{\mathbb{R}} 
\newcommand{\C}{\mathbb{C}}




\newcommand{\Hom}{{\operatorname{Hom}}}

\newcommand{\id}{\operatorname{id}}

\newcommand{\pr}{\operatorname{pr}}
\newcommand{\rest}{\!\restriction\!}
\newcommand{\dom}{\operatorname{dom}}
\renewcommand{\Im}{\operatorname{Im}}

 
\renewcommand{\le}{\leqslant} 
\renewcommand{\ge}{\geqslant} 
 



\DeclareMathOperator*{\comp}{\raisebox{0pt}{$\bigcirc$}}


\swapnumbers
\newtheorem*{Thm*}{Theorem}
\newtheorem{Thm}{Theorem}[section]
\newtheorem{Lemma}[Thm]{Lemma}

\newtheorem{Prop}[Thm]{Proposition}
\newtheorem{Fact}[Thm]{Fact}

\theoremstyle{definition}
\newtheorem{Def}[Thm]{Definition}

\newtheorem{Example}[Thm]{Example}

\theoremstyle{remark}
\newtheorem*{Remark}{Remark}

\author{Vadim Kulikov%
  \footnote{Also publishes as Vadim K. Weinstein.  Affiliations at the
    time of writing: KGRC, U. Vienna, Austria, and Department of
    Mathematics and Statistics, U. Helsinki, Finland. This paper
    has been published by the TAMS     http://dx.doi.org/10.1090/tran/6960
  }} \title{A Non-classification Result
  for Wild Knots}

\date{April 19, 2016}

\pagestyle{headings}

\addtolength{\oddsidemargin}{10pt}
\addtolength{\textwidth}{-10pt}

\begin{document}

\maketitle

\begin{abstract}
  Using methods of descriptive theory 
  it is shown that the classification problem for wild knots is strictly harder
  than that for countable structures.
\end{abstract}

\section{Introduction}

A knot is a homeomorphic image of $S^1$ in $S^3$. Two knots $k$ and
$k'$ are equivalent if there exists a homeomorphism 
$h\colon S^3\to S^3$ 
such that $h[k]=k'$, where $f[A]$ denotes the image of the set
$A$ under~$f$. 
Denote this equivalence relation by $E_K$.
If a knot is equivalent to a finite polygon, then
it is \emph{tame} and otherwise \emph{wild}. There has been a
significant effort to classify various subclasses of wild knots: 
from early \cite{HarFox,DoHo,Lom, McP} till recent
\cite{JuLa, Fri, Nanyes}. Also higher dimensional wild knots have been
addressed~\cite{Fri1,Fri2}, and replacing $S^1$ by the Cantor set one
obtains the related study of the so-called wild Cantor sets whose classification
(up to an analogously defined equivalence relation)
has also been of interest because of their importance in dynamical systems~\cite{GRWZ}.

In this paper it is shown that $E_K$ does not admit classification by
countable structures while the isomorphism on the latter is
classifiable by~$E_K$.  More precisely we show that $E_K$ is strictly
above the isomorphism relation on countable structures in the Borel
reducibility hierarchy.  This is proved by Borel reducing both the
isomorphism on linear orders (which is sufficient by a theorem of
L. Stanley and H. Friedman, cf. Theorem~\ref{thm:FrSt} of this paper)
and a turbulent equivalence relation to $E_K$ (Theorems~\ref{thm:LO}
and~\ref{thm:Main}).

This implies in particular that wild knots cannot be completely
classified\footnote{That is, not in any \emph{reasonable} way,
  cf. \cite{Gao,Hjorth,Rosendal} for the theory of classification
  complexity. An example of a \emph{non-reasonable} classification
  would be to attach real numbers to the knot types using the Axiom of
  Choice.} 
by real numbers considered up to any Borel equivalence
relation (see remark after Theorem~\ref{thm:FrSt}).

\paragraph*{Acknowledgements}

I would like to thank Rami Luisto without whom the proof of Theorem~\ref{thm:Main}
would have been considerably more cumbersome, if existed at all. 
The research was supported
by the Austrian Science Fund (FWF) under project number P24654.

\section{Preliminaries}

In this Section we introduce the basic notions and theorems
that we need in the paper. 

\subsection*{Polish Spaces and Borel Reductions}

For any metric space $(X,d)$ we adopt the following notations:
\begin{eqnarray*}
B_X(x,\d)&=&\{y\in X\mid d(x,y)<\d\},\\
\bar B_X(x,\d)&=&\{y\in X\mid d(x,y)\le\d\},\\
B_X(A,\d)&=&\{y\in X\mid d(A,y)<\d\},\\
\bar B_X(A,\d)&=&\{y\in X\mid d(A,y)\le\d\},
\end{eqnarray*}
where $d(A,y)=\inf\{d(x,y)\mid x\in A\}$. We drop ``$X$'' from 
the lower case when it is obvious from the context.

\begin{Def}\label{def:Polish}
  A \emph{Polish} space is a separable topological space which is
  homeomorphic to a complete metric space.  The collection of Borel
  subsets of a Polish space is the smallest $\sigma$-algebra
  containing the basic open sets. A \emph{Polish} group
  is a topological group which is a Polish space and both the group
  operation and the inverse are continuous functions.

  A pair $(A,\B)$ is a \emph{standard Borel space} if $\B$ is a $\s$-algebra
  on $A$ and there exists a Polish topology on $A$ for which $\B$ is precisely
  the collection of Borel sets. 

  A function $f\colon A\to B$, where $A$ and $B$ are standard Borel
  spaces, is \emph{Borel}, if the inverse image of every Borel set
  is Borel.

  Suppose $A$ and $B$ are standard Borel spaces. Then an equivalence relation 
  $E\subset A\times A$ is \emph{Borel reducible} to an equivalence
  relation $E'\subset B\times B$, if there exists a Borel map 
  $f\colon A\to B$ such that for all $x,y\in A$ we have
  $$xEy\iff f(x)E'f(y).$$
  In this case we can also say that the elements of $A$ considered up to
  $E$-equivalence are classified in a Borel way by the elements of $B$ considered up
  to $E'$-equivalence, or just denote it by $E\le_B E'$. This is a quasiorder
  (transitive and reflexive) on equivalence relations.

  If $G$ is a Polish group which acts in a Borel way on a standard Borel space
  $B$ through an action $\pi$, then we denote by $E^{B}_{G,\pi}$ 
  the \emph{orbit equivalence relation} defined 
  by $$(x,y)\in E^{B}_{G,\pi}\iff \exists g\in G(\pi(g,x)=y).$$
  In this case we say that $E^B_{G,\pi}$ is \emph{induced} by the action
  of $G$ on $B$ and often drop ``$\pi$'' from the notation when it is
  either irrelevant or clear from the context. Such an equivalence relation is
  analytic as a subset of~$B\times B$.
\end{Def}

\begin{Fact}(\cite[Cor 13.4]{Kechris})\label{fact:stB} If $(A,\B)$ is a 
  standard Borel space and $B\in\B$, then $(B,\B\rest B)$ is 
  also a standard Borel space. \qed
\end{Fact}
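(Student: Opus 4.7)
The plan is to exhibit a Polish topology on $B$ whose Borel $\sigma$-algebra is precisely $\B \restriction B$. I will argue via the classical refinement lemma: given a Polish topology $\tau$ on $A$ generating $\B$, for every $B \in \B$ there is a finer Polish topology $\tau^*$ on $A$ with the same Borel sets in which $B$ is clopen. Once this is in hand, the subspace topology $\tau^* \restriction B$ is Polish (since $B$ is clopen, it is a direct summand of the Polish space $(A,\tau^*)$), and its Borel sets are exactly $\B \restriction B$ because $\tau^*$ generates~$\B$.

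To prove the refinement lemma, I would proceed in three steps. First, for a \emph{closed} set $F\subseteq A$, both $F$ (as a closed subset of a Polish space) and $A\setminus F$ (as an open subset, hence $G_\delta$) are Polish in their subspace topologies; the disjoint-union topology on $A = F \sqcup (A\setminus F)$ is therefore Polish, refines $\tau$, makes $F$ clopen, and generates the same Borel $\sigma$-algebra. Second, given countably many Polish topologies $\tau_n$ on $A$, all refining $\tau$ and generating $\B$, their join $\tau_\infty$ (the topology generated by $\bigcup_n \tau_n$) is again Polish and generates $\B$: the diagonal map $A \to \prod_n (A,\tau_n)$ is a homeomorphism onto a closed subspace of the countable product, using that the diagonal is closed because each $\tau_n$ is Hausdorff and finer than the common topology~$\tau$.

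Third, one runs a transfinite induction on Borel complexity: the class $\mathcal{C}$ of sets $B\in\B$ for which the refinement lemma holds contains all closed sets (Step~1), is obviously closed under complementation (swap the roles of $B$ and $A\setminus B$), and is closed under countable unions by applying Step~2 to a sequence of refinements witnessing membership of $B_n\in\mathcal{C}$ and then further refining so the union becomes open. Since $\mathcal{C}$ contains the closed sets and is a $\sigma$-algebra, $\mathcal{C} = \B$.

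The main obstacle is the transfinite induction in the third step, specifically verifying that passing from a countable family $(\tau_n)$ to the joint topology $\tau_\infty$ preserves both the Polish property and the Borel $\sigma$-algebra; everything else is either routine topology or a direct application of these two facts. With the refinement lemma in place, the statement of the fact follows immediately by restricting the refined topology to~$B$ and applying Fact~\ref{fact:stB}'s claim at the level of the Borel structure.
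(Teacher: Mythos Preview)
Your argument is correct and is precisely the standard textbook proof; the paper itself gives no argument at all, merely citing Kechris \textup{[Cor.~13.4]}, and what you have sketched is essentially Kechris's own route to that corollary via the refinement lemma (his Theorem~13.1). So there is nothing in the paper to compare against beyond noting that you have reproduced the cited proof.

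Two minor wording issues. In Step~3 you write ``further refining so the union becomes open''; you presumably mean \emph{clopen}, since $\bigcup_n B_n$ is already open in the join $\tau_\infty$, and one further application of Step~1 (to its closed complement) is what makes it clopen. Your closing sentence, ``applying Fact~\ref{fact:stB}'s claim at the level of the Borel structure'', reads circularly; all you need there is the elementary fact that the Borel $\sigma$-algebra of a subspace is the trace of the ambient one, so $\operatorname{Borel}(\tau^*\rest B)=\B\rest B$.
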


\begin{Example}\label{ex:Polish}
  The complex numbers $\C$ is a Polish space and the unit circle $S^1\subset \C$ 
  is a Polish group whose group operation is the multiplication of complex numbers.
  The countable product $(S^1)^\N$ is a Polish group as well in the product topology.
  Given a compact space $X$, the group of self-homeomorphisms $\Hom(X)$
  is Polish in the topology induced by the sup-metric~\cite{Kechris}.

  Let $L$ be any countable (first-order) vocabulary and let $S(L)$ be
  the set of all countable $L$-structures with domain $\N$. This space
  can be viewed as a Polish space, in fact homeomorphic to the Cantor
  space $2^{\N}$. For instance, if $L$ contains just one binary
  relation $R$, then for each $\eta\in 2^{\N}$, let $\A_\eta$ be the
  model with domain $\N$ such that for all $x,y\in\N$,
  $R^{\A_\eta}(x,y)\iff \eta(\pi(x,y))=1$, where $\pi\colon
  \N\times\N\to\N$ is a bijection. 
  Denote by $\cong_{S(L)}$ the isomorphism relation
  $S(L)$ and more generally, if $M\subset S(L)$ is closed under isomorphism, 
  denote $\cong_{M}\ =\ \cong_{S(L)}\rest M$.
  It is not hard to see that $\cong_{S(L)}$
  is induced by an action of the infinite symmetric group $S_\infty$
  (cf.~\cite{Gao}) which is Polish as the subspace of the Baire space~$\N^\N$.

  Let $L=\{<\}$ be a vocabulary consisting of one binary relation and let
  $\LO\subset S(L)$ be the set of linear orders. It is a closed subset \cite{FrSt}
  and so Polish itself.
\end{Example}

\begin{Thm}[H. Friedman and L. Stanley \cite{FrSt}]\label{thm:FrSt}
  $\cong_{\LO}$ is $\le_B$-maximal among all isomorphism relations,
  i.e.  $\cong_{S(L)}\ \le_B \ \cong_{\LO}$ holds for any vocabulary~$L$. \qed
\end{Thm}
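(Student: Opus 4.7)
The plan is to establish $\cong_{S(L)} \le_B \cong_{\LO}$ for arbitrary countable vocabulary $L$ by factoring the reduction through the isomorphism relation on countable rooted trees. Denote by $\cong_{\mathrm{Tr}}$ the isomorphism relation on the Polish space of rooted trees on $\N$.

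The first step is to reduce $\cong_{S(L)}$ to $\cong_{\mathrm{Tr}}$. Given $\A\in S(L)$ with universe $\N$, I would construct a rooted tree $T(\A)$ that records, in its branching structure, the atomic diagram of $\A$: the root branches to a copy of $\A$, each such child branches further to encode which atomic $L$-formulas of the form $\varphi(a,\bar b)$ hold in $\A$, and still further descendants encode atomic information about the tuples $\bar b$ themselves. A standard back-and-forth argument yields $\A\cong\A'$ iff $T(\A)\cong T(\A')$, and the map $\A\mapsto T(\A)$ is clearly Borel.

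Next I would reduce $\cong_{\mathrm{Tr}}$ to $\cong_{\LO}$ by a node-replacement construction. For a rooted tree $T$ on $\N$ and each node $s\in T$, define a linear order $L_s(T)$ recursively by
\[
L_s(T) \;=\; \sum_{n\in\omega}\bigl(B_{|s|,n} \,+\, L_{s\cat n}(T)\bigr),
\]
where $L_{s\cat n}(T)$ is taken to be empty whenever $s\cat n\notin T$ and each $B_{d,n}$ is a distinguished marker linear order whose isomorphism type depends only on the pair $(d,n)$. Put $L(T):=L_{\langle\rangle}(T)$. A workable choice is to let $B_{d,n}$ contain a well-ordered segment of order type $\omega^{d+1}\cdot(n+1)$ flanked by gaps, designed so that $B_{d,n}$ cannot occur as a convex subset of any $L_{s'}(T)$ with $|s'|>d$.

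The main obstacle will be to show that $L(T)\cong L(T')$ implies $T\cong T'$; the converse is immediate from the recursive definition. One must verify that the decomposition of $L(T)$ into marker blocks and child-subtree encodings is canonical, so that any order-isomorphism $\varphi\colon L(T)\to L(T')$ must send each $B_{d,n}$-block in $L(T)$ to a $B_{d,n}$-block in $L(T')$ and thereby restrict to isomorphisms of the corresponding sub-orders $L_{s\cat n}(T)$ and $L_{s'\cat n}(T')$. Iterating this down the tree produces the required tree isomorphism. Establishing this canonicity is the crux of the construction and is sensitive to the precise choice of marker blocks; once it is in place, Borelness of the composed map $\A\mapsto T(\A)\mapsto L(T(\A))$ follows from the recursive definability of each stage.
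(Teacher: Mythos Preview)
The paper does not prove this theorem: it is stated with attribution to Friedman and Stanley and closed immediately with a \qed. There is nothing in the paper to compare your argument against beyond the citation to~\cite{FrSt}.

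That said, your sketch follows the standard two-step strategy (structures $\to$ trees $\to$ linear orders), but the second step has a genuine gap as written. You let the marker $B_{d,n}$ depend on the child index $n$, yet isomorphism of rooted trees permits permuting children. If $T$ and $T'$ are isomorphic trees in which some node's children are enumerated differently, your construction inserts different marker blocks between the corresponding subtree-encodings, and the direction you call ``immediate'', namely $T\cong T'\Rightarrow L(T)\cong L(T')$, fails. Concretely: a root with a single child labeled $0$ versus a root with a single child labeled $1$ gives isomorphic rooted trees but, under your recipe, non-isomorphic orders once the $B_{0,n}$ are pairwise non-isomorphic. The Friedman--Stanley encoding and its textbook variants avoid this by making the order type below a node depend only on the \emph{multiset} of child-subtree types---for instance via a $\Q$-shuffle of the subtree orders, or markers uniform across siblings---so that relabeling children leaves the resulting order type unchanged.

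A secondary issue: the recursion $L_s(T)=\sum_{n}(B_{|s|,n}+L_{s\cat n}(T))$ is not well-founded, since the trees you need must be allowed to have infinite branches (isomorphism restricted to well-founded trees is Borel, hence not $\le_B$-maximal). You would have to define $L(T)$ directly as a concrete ordered set rather than by recursion down the tree; this is repairable, but it should be made explicit.
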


In particular $\cong_{\LO}$ is not Borel as a subset of $\LO\times \LO$, because the
isomorphism relation on graphs is complete analytic 
as a set \cite[Thm 4]{FrSt} and is Borel
reducible to $\cong_{\LO}$ by the above.
If an equivalence relation $E$ is reducible to $\cong_{S(L)}$ for some $L$ (or
equivalently to~$\cong_{\LO}$) we say, following \cite{Hjorth}, that $E$
\emph{admits classification by countable structures}.

\subsection*{Turbulence}

Turbulent group actions were introduced by G. Hjorth in \cite{Hjorth}.
The idea of turbulence is roughly to characterize those Polish group actions
whose orbit equivalence relation does
not admit classification by countable structures.

\begin{Def}[Hjorth]
  Suppose that $X$ is a Polish space
  and let $G$ be a Polish group acting continuously on~$X$.
  For $x\in X$ denote by $[x]=[x]_G$ the orbit of $x$.
  This action is said to be \emph{turbulent}, if the following conditions
  are satisfied:
  \begin{enumerate}
  \item every orbit is dense,
  \item every orbit is meager,
  \item for every $x,y\in X$,
    every open $U\subset X$ with $x\in U$ and every open $V\subset G$ with $1_G\in V$, 
    there exist a $y_0\in [y]_G$ and sequences $(g_i)_{i\in\N}$ 
    in $V$ and $(x_i)_{i\in\N}$ in $U$ such that 
    $y_0$ is an accumulation point of the set
    $\{x_i\mid i\in \N\}$ and $x_0=x$ and $x_{i+1}=g_ix_i$ for all $i\in\N$.
  \end{enumerate}
\end{Def}

Equivalence relations arising from turbulent actions do not admit classification
by countable structures~\cite[Cor 3.19]{Hjorth}:

\begin{Thm}[Hjorth]\label{thm:Hjorth}
  Suppose that a Polish group $G$ acts on a Polish space $X$ in a
  turbulent way. Then $E^X_G$ does not admit classification by
  countable structures, i.e.  $E^X_G\not \le_B\ \cong_{S(L)}$ for
  any~$L$. \qed
\end{Thm}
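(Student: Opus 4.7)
The plan is to argue by contradiction: suppose $\varphi\colon X\to S(L)$ is a Borel reduction witnessing $E^X_G\le_B\ \cong_{S(L)}$, and derive a contradiction from the three turbulence clauses. The strategy uses the Scott analysis of countable $L$-structures, which stratifies $\cong_{S(L)}$ by countably many $S_\infty$-invariant Borel approximations: for each countable ordinal $\alpha$ let $\equiv_\alpha$ be the relation of satisfying the same $L_{\omega_1\omega}$-sentences of quantifier rank $\le\alpha$, so that each $\equiv_\alpha$ is Borel and $\cong_{S(L)}\ =\ \bigcap_\alpha \equiv_\alpha$. Pulling back along $\varphi$ gives a descending chain of Borel equivalence relations $E_\alpha=\varphi^{-1}[\equiv_\alpha]$ on $X$ whose intersection is $E^X_G$.

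The heart of the proof is a Baire-category analysis of each $E_\alpha$. Since the $G$-orbits are simultaneously dense (clause (1)) and meager (clause (2)), and each $E_\alpha$-class is a Borel and $G$-invariant set, a Kuratowski--Ulam argument on $G\times X$ produces a unique comeager $E_\alpha$-class $C_\alpha$. Clause (3) is then invoked to show that $C_\alpha$ absorbs everything: given a generic $x\in C_\alpha$ and an arbitrary $y\in X$, one applies the clause with shrinking neighborhoods $U$ of $x$ and $V$ of $1_G$ to build a local-orbit chain $x=x_0,x_1,\dots$ with small perturbations $g_i$ accumulating at a translate $y_0\in [y]_G$. Borel measurability of $E_\alpha$ combined with an iterated Pettis / Kuratowski--Ulam argument propagates membership in $C_\alpha$ along the chain and hence to $y_0$, so $[y]_G\subseteq C_\alpha$. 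Consequently $E_\alpha$ is the total relation $X\times X$ for every countable $\alpha$, and taking the intersection gives $E^X_G=X\times X$, forcing $X$ to be a single $G$-orbit---which contradicts clause (2), since a Polish space is not meager in itself.

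The step I expect to be most delicate is the propagation phase: translating clause (3) into a statement that genuinely preserves \emph{generic} membership in the comeager set $C_\alpha$ requires coordinating the choice of the neighborhoods $U,V$ furnished by turbulence with the Borel complexity of $E_\alpha$, and avoiding circularity between the level $\alpha$ and the genericity data used to locate the comeager class. One typically handles this by first establishing an ``all-but-meagerly-many chains stay inside $C_\alpha$'' statement for small fixed $U,V$, then iterating and taking diagonal intersections. The remaining ingredients---Scott analysis of countable structures, the fact that each isomorphism class is Borel, and ordinary Kuratowski--Ulam---are by now standard descriptive set theory, and their orchestration along these lines is exactly what is carried out in \cite[Ch.\ 3]{Hjorth}.
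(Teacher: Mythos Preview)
The paper does not prove this theorem: it is stated with a terminal \qed and attributed to \cite[Cor~3.19]{Hjorth}, so there is no in-paper argument to compare your sketch against.

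On the substance of your sketch: you have the right ingredients (Scott stratification, Baire category, local-orbit chains from clause~(3)), but the conclusion you aim for is too strong, and the argument as written does not reach it. You claim that the comeager class $C_\alpha$ ``absorbs everything'' so that $E_\alpha=X\times X$; but the local-orbit chain in clause~(3) only lets you propagate membership in $C_\alpha$ among \emph{generic} points, and a $G$-invariant comeager Borel set need not equal $X$ even when all orbits are dense (its complement can be a nonempty meager union of orbits). Hjorth's actual argument establishes the weaker---and sufficient---conclusion of \emph{generic $S_\infty$-ergodicity}: for any Baire-measurable reduction $\varphi$ there is a comeager set on which $\varphi$ lands in a single $\cong$-class. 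That already yields the contradiction directly, since the $\varphi$-preimage of a single isomorphism class is a single $G$-orbit, and a comeager orbit violates clause~(2). A second subtlety you pass over is that intersecting the $C_\alpha$ over \emph{all} countable $\alpha$ need not stay comeager; one handles this either by bounding the relevant Scott ranks on a comeager set or by organizing the induction so that only countably many stages are actually used.
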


By Theorems~\ref{thm:FrSt} and~\ref{thm:Hjorth},
if one wants to show that some equivalence relation $E$ is strictly $\le_B$-above 
the isomorphism of countable structures, it
is sufficient to prove $\cong_\LO\ \le_B E$ and $E^*\le_B E$ for some turbulent~$E^*$.
This is our plan for $E=E_K$ In this case the turbulent equivalence relation $E^*$
will be a certain relation on~$(S^1)^\N$.

\begin{Def}\label{def:TheTurbEQ}
  For present purposes it will be convenient to redefine $S^1$ in a little bit
  non-standard way (still homeomorphic to the usual $S^1$ as a subset of $\R^2$). 
  Let $S^1=\sval{-\pi,\pi}/\{-\pi,\pi\}$, i.e. the closed interval 
  $\sval{-\pi,\pi}\subset \R$
  with the end-points identified. Denote by $S^1_{\C}$ the unit circle in the complex plane
  equipped with the group structure induced from multiplication in $\C$
  and let $\tau\colon S^1_\C\to S^1$ be the homeomorphism $\tau\colon e^{i\theta}\mapsto \theta$.
  This induces canonically a group structure on~$S^1$.

  Let $X=(S^1)^\N$ be the set of sequences on $S^1$ with the Tychonoff product topology. 
  Since $S^1$ is a topological group, this induces a group structure on~$X$.
  Let $G\subset X$ be the subgroup of those $\bar x$ for which
  $\lim_{n\to\infty}x_n=0$. This group is Polish when equipped with the sup-metric which
  induces a finer topology than that inherited from the ambient space, but gives the same Borel sets,
  i.e. the group is \emph{Polishable}.
  We say that two sequences $\bar x',\bar x\in (S^1)^\N$ are $E^*$-equivalent 
  if $\bar x'-\bar x\in G$. Thus $E^*$ is induced by the action of $G$ 
  given by translation. If one replaces $S^1$ by $\R$ in the above, then this equivalence
  relation is proved to be turbulent in \cite[§ 3.3]{Hjorth}. 
  The same proof works in this case as well;
  the only difference is that in his proof Hjorth uses division of reals by natural numbers
  and taking their absolute value. The only problem is the element 
  $\bar \pi=\{-\pi,\pi\}$ of $S^1$, so
  for the sake of the proof one can define $\bar \pi /k=\pi/k$ for all $k\ge 2$
  and $|\bar \pi|=\pi$ and otherwise simply compute everything as in~$\R$. 
  The essential properties
  that $|x/k|\stackrel{k\to\infty}{\longrightarrow}0$
  and $(\underbrace{x/k+\dots+x/k}_{k\ times})=x$ are then preserved.
\end{Def}

\subsection*{The Space of Knots}

We think of $S^3$ as the one-point compactification $\R^3\cup \{\infty\}$. 
The unit circle $S^1$ is parametrized as described in Definition~\ref{def:TheTurbEQ},
but this parametrization is relevant only in Section~\ref{sec:Turb}.
Let $K(S^3)$ be the space of compact subsets of $S^3$ equipped with the
Hausdorff metric:
\begin{equation}
  d_{K(S^3)}(K_0,K_1)=\inf\{\e\mid K_0\subset B(K_1,\e)\land K_1\subset B(K_0,\e)\}.\label{eq:HD}
\end{equation}

This space is compact Polish~\cite{Kechris}.
Let $\K$ be the subset of $K(S^3)$ consisting of knots, i.e. 
homeomorphic images of~$S^1$. The following is a consequence of a more general
theorem by Ryll-Nardzewski~\cite{RN}:

\begin{Thm}
  $\K$ is a Borel subset of $K(S^3)$ and so by Fact~\ref{fact:stB}
  it is a standard Borel space. \qed
\end{Thm}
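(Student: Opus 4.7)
The plan is to show that $\K$ is both analytic ($\Sii$) and coanalytic ($\Pii$) in $K(S^3)$, and then invoke Souslin's theorem.

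For the analytic direction, I would work with the Polish space $C(S^1, S^3)$ of continuous maps with the sup-metric. Since $S^1$ is compact, a continuous injection is automatically a topological embedding, and the set of such embeddings is a $G_\delta$ subset (namely $\bigcap_n \{f : \inf_{d(x,y)\ge 1/n} d(f(x),f(y)) > 0\}$), hence Polish in its own right. The map $f \mapsto f[S^1]$ from this Polish space into $K(S^3)$ is continuous in the Hausdorff metric, and its image is exactly $\K$; thus $\K$ is analytic.

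For the coanalytic direction, I would use the classical cut-point characterization: a metric continuum with more than one point is homeomorphic to $S^1$ if and only if it is locally connected, every point is a non-cut point, and every pair of distinct points separates it. Being a non-degenerate continuum is a $G_\delta$ condition in $K(S^3)$, and local connectedness of a continuum is Borel by classical results in hyperspace theory. The remaining two conditions take the form $\forall x \in K\colon \varphi(K,x)$ and $\forall x, y \in K,\ x\neq y\colon \psi(K,x,y)$, where $\varphi$ and $\psi$ respectively express connectedness of $K \setminus \{x\}$ and disconnectedness of $K \setminus \{x,y\}$. These predicates are Borel in their arguments (separations can be witnessed using countably many basic opens of $S^3$), and universal quantification over the Polish space $S^3$ takes Borel to $\Pii$. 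Thus $\K \in \Sii \cap \Pii$, and Souslin's theorem gives that $\K$ is Borel.

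The principal obstacle is the rigorous verification of the cut-point characterization: examples such as the $\theta$-curve (every point is a non-cut point, yet not every pair separates) and the Warsaw circle (fails local connectedness, yet every point is non-cut) show that all three conditions are genuinely required, and one must appeal to the classical Whyburn/Kuratowski characterization of the simple closed curve to close the gap. Granted this characterization together with the Borel-measurability of local connectedness in the hyperspace, the descriptive-set-theoretic complexity computations above are routine.
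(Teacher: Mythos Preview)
Your argument is sound, and in fact it supplies strictly more than the paper does: the paper gives no proof at all, merely citing a general theorem of Ryll-Nardzewski to the effect that, for any fixed compact metric space $C$, the set $\{K\in K(X):K\text{ is homeomorphic to }C\}$ is Borel in the hyperspace of any Polish~$X$. Your approach instead produces a self-contained proof for the special case $C=S^1$ via $\Sii\cap\Pii$ and Souslin's theorem.

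Two remarks on the comparison. First, the Ryll-Nardzewski route is more general (it would equally well show that the space of embedded $2$-spheres, or Cantor sets, or any other fixed compactum, is Borel), whereas your argument exploits a topological characterization peculiar to the circle. Second, and to your advantage, you can streamline the $\Pii$ half and sidestep the one point you flagged as delicate. By R.~L.~Moore's classical characterization, a nondegenerate metric continuum is a simple closed curve if and only if \emph{every pair of distinct points separates it}; local connectedness and the non-cut-point condition are consequences, not additional hypotheses. (Your Warsaw-circle example fails the two-point separation property, not merely local connectedness: removing two interior points of the limit bar leaves the space connected.) With Moore's criterion, the $\Pii$ bound reduces to ``nondegenerate continuum'' (easily Borel) conjoined with a single $\forall x,y$ over $S^3\times S^3$ of a Borel matrix, and you no longer need to invoke the Borelness of local connectedness in the hyperspace.
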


\begin{Def}
  Suppose $X,Y$ are a topological spaces and $A\subset X, B\subset Y$ are
  subsets. We say that the pairs $(X,A)$ and $(Y,B)$ are homeomorphic (as pairs)
  if there exists a homeomorphism $h\colon X\to Y$ with $h[A]=B$.
  Thus, a knot can be thought of as a pair $(S^3,K)$ where $K$ is homeomorphic to 
  $S^1$. Two knots $K$ and $K'$ are equivalent, if $(S^3,K)$ and $(S^3,K')$
  are homeomorphic as pairs. This is the equivalence relation on 
  $\K$ denoted by $E_K$.

  $E_K$ is induced by a Polish group action in a natural way.
  Let $\Hom(S^3)$ be the group of homeomorphisms of $S^3$ onto itself.
  It is a Polish group (cf. Example~\ref{ex:Polish}) and
  it acts on $\K$ by $h\cdot K=h[K]$. 
\end{Def}

\begin{Def}\label{def:PropArc}
  Let $I$ be a set homeomorphic to the unit interval $\sval{0,1}$
  and let $\bar B$ be a set homeomorphic to a closed ball in $S^3$.  We say that 
  $f\colon I\to \bar B$ or $(\bar B, \Im f)$ or $(\bar B, f)$ 
  is a \emph{proper arc in $\bar B$} 
  if $x\in \partial I\iff f(x)\in \partial \bar B$. 
  Two proper arcs $f\colon I\to \bar B$,
  $f'\colon I\to \bar B'$
  are equivalent if $(\bar B,\Im f)$ and $(\bar B',\Im f')$
  are homeomorphic as pairs. In the case of knots and proper arcs we often abuse notation
  by denoting $\Im f$ by $f$. A
  proper arc is \emph{tame} if it is equivalent to a smooth arc or equivalently a 
  finite polygon. It is \emph{unknotted} or \emph{trivial} if it is equivalent to
  $$(\bar B_{\R^3}(\bar 0,1),\sval{-1,1}\times \{(0,0)\}).$$
  
  We say that a proper arc $(\bar B,f)$ has $(\bar B',g)$ 
  as \emph{as a component}, if there exists 
  a proper arc $(\bar B_0,h)$ and $\bar B_1\subset \bar B_0$
  such that $(\bar B,f)$ is equivalent to $(\bar B_0,h)$,  $(\bar B_1,h\cap\bar B_1)$ is a
  proper arc and $(\bar B',g)$ is equivalent to $(\bar B_1,h\cap\bar B_1)$.
  This is clearly independent of the choice of representatives.
\end{Def}

\section{Embedding $\cong_{\LO}$ into $E_K$}\label{ssec:MintoD}

\begin{Thm}\label{thm:LO}
  There is a Borel reduction $\cong_\LO\ \le_B \ E_K$.
\end{Thm}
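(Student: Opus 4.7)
The plan is to reduce $\cong_\LO$ to $E_K$ by encoding a countable linear order $L$ as the spatial ordering of countably many rigid local wild \emph{markers} placed along an otherwise trivial arc in $S^3$, then closed up to form a knot. Fix once and for all a marker arc $(D,\alpha_0)$: a wild proper arc in a closed ball, chosen to be rigidly recognizable---for definiteness, a Fox--Artin arc with a single wild point at one of its endpoints---so that $\alpha_0$ is \emph{asymmetric} as a pair, in the sense that no pair-homeomorphism of $(D,\alpha_0)$ interchanges its two endpoints (the wild endpoint is topologically distinguished from the tame one).

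Given $L\in\LO$ with domain $\N$, choose in a Borel way an order-embedding $\iota_L\colon(\N,<_L)\hookrightarrow \Q\cap(0,1)$ by a standard recursion using a fixed enumeration of $\Q\cap(0,1)$. Inside a large closed ball $\bar B\subset S^3$, start with the trivial arc $\gamma_0=\sval{0,1}\times\{0\}\times\{0\}$; for each $n\in\N$ take a ball $\bar B_n$ of radius $\e_n$ centered at $(\iota_L(n),0,0)$, with $(\e_n)$ chosen so fast-decreasing that the $\bar B_n$ are pairwise disjoint. Replace the sub-arc $\gamma_0\cap\bar B_n$ by a scaled copy of $\alpha_0$, matching endpoints to $\partial\bar B_n$, and close $\gamma_0$ outside $\bar B$ by a trivial arc in $S^3$ to form a knot $K_L\in\K$. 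This map is Borel by inspection. For the easy direction, given an order-isomorphism $\pi\colon L\to L'$, a piecewise linear self-homeomorphism of $\R$ realizing $\iota_L(n)\mapsto\iota_{L'}(\pi(n))$ thickens to an element of $\Hom(S^3)$ that carries each $\bar B_n$ of $K_L$ to the corresponding ball of $K_{L'}$ and acts as a scaled identity on the copies of $\alpha_0$, thereby sending $K_L$ to $K_{L'}$.

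For the converse, suppose $h\in\Hom(S^3)$ with $h[K_L]=K_{L'}$. Call a closed ball $\bar B'\subset S^3$ a \emph{marker ball} for a knot $K$ if $(\bar B',K\cap\bar B')$ is a proper arc equivalent as a pair to $(D,\alpha_0)$; this notion is $E_K$-invariant. The construction ensures that the maximal-by-inclusion marker balls of $K_L$ are precisely the $\bar B_n$: inside $\bar B_n$ we see exactly the pair $(D,\alpha_0)$, nothing outside $\bigcup_n\bar B_n$ on $\gamma_0$ is wild, and any accumulation point of the $\bar B_n$'s (present when $L$ has limit points) is locally \emph{wilder} than $\alpha_0$ and so cannot sit in any marker ball. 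Hence $h$ induces a bijection $\pi\colon\N\to\N$ between the marker balls of $K_L$ and of $K_{L'}$. Since $h$ also restricts to a self-homeomorphism of the underlying $S^1$, it preserves or reverses the cyclic order of the $\bar B_n$'s along $K$; the asymmetry of $\alpha_0$ forbids reversal (it would locally interchange the two endpoints of some copy of $\alpha_0$), and the tame closing arc outside $\bar B$ breaks the cyclic symmetry so that the cyclic order on markers agrees with the linear order on $\N$. Consequently $\pi$ is an order-isomorphism $L\cong L'$.

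The main obstacle is the rigidity argument of the last paragraph: verifying that the maximal marker balls recover precisely the combinatorial family $\{\bar B_n\}$---ruling out accidental marker balls produced near accumulation points, or spanning several $\bar B_n$---and that the asymmetry of $\alpha_0$ together with the tame closing arc forbids order-reversal. This rests on a careful choice of $\alpha_0$ and on the observation that the local knot type of an accumulation of copies of $\alpha_0$ strictly dominates $[\alpha_0]$.
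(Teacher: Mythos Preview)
Your overall strategy---placing local wild markers along an arc and recovering the order from their arrangement---matches the paper's. The gap is in the forward direction. The forth-only embedding $\iota_L$ depends on the enumeration of $L$, not only on its order type, and for isomorphic $L\cong L'$ the induced order-isomorphism $\iota_L[\N]\to\iota_{L'}[\N]$ need not extend to any self-homeomorphism of $[0,1]$ (let alone a piecewise-linear one, which can only accommodate finitely many prescribed values). For instance, with two labelings of $\omega+\omega$ one embedding may have the first $\omega$-block accumulate exactly at the first point of the second block while the other leaves a gap; the resulting knots are then inequivalent (one has a marker sitting at an accumulation point of markers, the other does not), so your $F$ is not a reduction. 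Relatedly, whenever some $\iota_L(n)$ is a limit point of $\iota_L[\N]$ no positive $\e_n$ keeps the $\bar B_n$ pairwise disjoint, so the construction is not even well defined there.

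The paper addresses exactly this: it first normalizes to orders with a least element, no greatest element, and an explicit successor relation $S$, and then builds the embedding together with intervals $V_n^L$ so that $\sup V_n^L=\inf V_m^L$ iff $(n,m)\in S$ and $\bigcup_n V_n^L$ is dense in $[0,1]$. This forces any isomorphism $L\to L'$ to lift to a homeomorphism of $\R^3$ assembled wall by wall on the slabs $\overline{V_n^L}\times\R^2$. The least-but-no-greatest normalization also disposes of the orientation issue you try to handle via the asymmetry of $\alpha_0$ and the ``tame closing arc''; note that the closing arc is not obviously intrinsic, since $K_L$ can have many maximal tame subarcs, one for each gap of $\overline{\iota_L[\N]}$ in $[0,1]$.
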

\begin{proof}
  The idea is the following. 
  A linear order $L$ is embedded into the unit interval $I$ 
  such that every point in the image is isolated. Then the knot is constructed
  by replacing a small 
  neighborhood of each such point by a singular knot 
  depicted on Figure~\ref{fig:SingularKnot}. Now the
  order type of the isolated singularities of the resulting knot is the same as
  the order type of~$L$.
  All the technical details below are there to ensure that this function is
  indeed a Borel reduction of $\cong_{\LO}$ into~$E_K$.

  Similarly as in Definition~\ref{ex:Polish}, let $\LO$ be the set of those
  $R\in 2^{\N\times\N}$ which define a linear order on $\N$.
  Let $\LO^*$ be the space of those $(R,S)\in
  (2^{\N\times\N})^2$ for which $R$ defines a linear order on $\N$ with $0$ being the smallest
  element and with no greatest element and
  $S(n,m)=1$ if and only if there are no elements $R$-between $n$ and
  $m$.  Since $S$ is first-order definable from $R$, there is a canonical Borel map
  $\LO\to \LO^*$ which is a reduction from $\cong_{\LO}$ to $\cong_{\LO^*}$: For a linear
  order $L$, first let $L'=1+L+1+1+\Q$ and then canonically define $(R,S)$ so that $R$
  is $L'$. Obviously this is a Borel reduction and so
  it is sufficient to reduce $\cong_{\LO^*}$ into $E_K$.
  Suppose $L=(R_L,S_L)$ is an element of $\LO^*$, where we also denote $R_L=\,<_L$.
  Let
  $f_L\colon \N\to\R$ be a map and $(V^L_n)$ a sequence of open intervals
  defined by induction as follows. 
  First $f_L(0)=1/4$ and $V^L_0=\aval{0,\frac{1}{2}}$.
  Suppose $f_L(m)$ and $V^L_m$ are defined for $m<n$ such that for all distinct $m',m<n$
  \begin{itemize}
  \item $f_L(m)$ is the middle point of $V^L_m$, 
  \item $m'<_L m\iff f_L(m')< f_L(m)$,
  \item $V^L_{m'}\cap V^L_m=\es$,
  \item $\sup V^L_m<1$,
  \item if $m'<_L m$, then $\inf V^{L}_m - \sup V^{L}_{m'}=0\iff (m',m)\in S_L$.
  \end{itemize}
  There are two cases: either $m<_L n$ for all $m<n$
  or there are $m',m<n$ such that $m'<_L n<_L m$ and there is no
  $k<n$ which is $<_L$-between $m'$ and $m$. 
  In the first case let $m_0$ be the $<_L$-largest among all $m<n$. 
  Let $f_L(n)=\sup V^L_{m_0}+\frac{1}{3}(1-\sup V^L_{m_0})$ and if
  $(m_0, n)\in S$, then let $V^L_n=B(f_L(n),f_L(n)-\sup V^L_{m_0})$
  and otherwise let $V^L_n=B(f_L(n),\frac{1}{2}(f_L(n)-\sup V^L_{m_0}))$.
  In the second case define
  $$f_L(n)=
  \begin{cases}
    \frac{1}{2}(\sup V^L_{m'}+\inf V^L_m) &\text{if }(m',n)\in S\iff (n,m)\in S_L\\
    \sup V^L_{m'}+\frac{1}{3}(\inf V^L_m-\sup V^L_{m'}) &\text{if }(m',n)\in S_L\text{ and } (n,m)\notin S_L\\
    \sup V^L_{m'}+\frac{2}{3}(\inf V^L_m-\sup V^L_{m'}) &\text{if }(m',n)\notin S_L\text{ and } (n,m)\in S_L.
  \end{cases}
  $$
  Then define
  $$V^L_n=
  \begin{cases}
    B(f_L(n),\frac{1}{2}(\inf V^L_{m}-\sup V^L_{m'})) &\text{ if }(m',n)\in S_L \text{ and }(n,m)\in S_L\\
    B(f_L(n),\frac{1}{4}(\inf V^L_{m}-\sup V^L_{m'})) &\text{ if }(m',n)\notin S_L \text{ and }(n,m)\notin S_L\\
    B(f_L(n),f_L(n)-\sup V^L_{m'}) &\text{ if }(m',n)\in S_L \text{ and }(n,m)\notin S_L\\
    B(f_L(n),\inf V^L_m - f_L(n)) &\text{ if }(m',n)\notin S_L \text{ and }(n,m)\in S_L.
  \end{cases}
  $$

  This ensures the following:
  \begin{enumerate}
  \item for every distinct $n,m\in \N$ the sets $V^L_n, V^L_m$ are disjoint and $
    f_L(n)$ is the middle point of $V_n^L$, 
  \item $\Cup_{n\in \N}V^L_n$ is dense in $\sval{0,1}$,
  \item if $L\rest \{0,\dots,n\}=L'\rest \{0,\dots,n\}$, then
    $f^L\rest \{0,\dots,n\}=f^{L'}\rest \{0,\dots,n\}$ and
    $V^L_m=V^{L'}_m$ for all $m\le n$. \label{thingyything}
  \item $\sup V^L_n = \inf V^L_m\iff n<_L m \text{ and }(n,m)\in S_L$
  \item $f_L(m')<f_L(m)\iff m'<_L m$.
  \end{enumerate}
  Now pick open intervals $U^L_n\subset V^L_n$ such that $\overline{U_n^L}\subset V^L_n$
  and $f_L(n)$ is the center of $U^L_n$.
  Thinking of $\R$ as the $x$-axis of $\R^3$, replace each $U^L_n$ by the open ball
  $B^L_n$ with the center at $f_L(n)$ and radius $\frac{1}{2}|U^L_n|$.
  Now build the knot $K=F(L)\subset S^3$ as follows. Let $K_a^L=(\R\setminus \sval{0,1})\cup\{\infty\}$. 
  For each $n\in \N$ let $K^L_n\subset \overline{B^L_n}$ be a 
  proper arc in $\overline{B^L_n}$ with end-points
  coinciding with the end-points of $U^L_n$.
  This arc $K^L_n$ is equivalent to the knot depicted on Figure~\ref{fig:SingularKnot},
  it is a knot sum of infinitely many trefoils
  and has one singularity precisely at $f_L(n)$.
  Let $K^L_r$ cover the rest: $K^L_r=\sval{0,1}\setminus \Cup_{n\in \N}U^L_n$.
  Now define
  $$K=F(L)=\Cup_{x\in \{a,r\}\cup L}K^L_x.$$
  Let us show that $F$ is continuous. Let $L\in\LO^*$. We will show
  that for every open neighborhood $U$ of $F(L)$ there is an open neighborhood of
  $L$ which is mapped inside $U$. Suppose that $U$ is the $\e$-ball with center $F(L)$
  in the Hausdorff metric, cf.~\ref{eq:HD}. Pick $n$ so large that $|V^L_n|<\e/2$. We claim that
  for any $L'$ with $L'\rest \{0,\dots,n\}=L\rest \{0,\dots,n\}$ we have
  $F(L')\subset B(F(L),\e)$ and $F(L)\subset B(F(L'),\e)$. 
  The argument is symmetric, so we will just show the first part.
  If $x$ is in $F(L')$, then it is in one of
  the sets $K^{L'}_z$ for $z\in \{a,r\}\cup \N$. 
  If $z\in \{a,0,\dots,n\}$, then by \ref{thingyything}, 
  $x\in F(L)$. Suppose $x\in K^{L'}_m$ for some $m>n$. Denote by $\pr x$
  the projection of $x$ onto the $x$-axis, so we have
  $d(x,\pr x)<\e/2$. On the other hand $\pr x\notin \bar B^{L}_{k}$
  for any $k\le n$, so $d(\pr x,F(L))<\e/2$ and we have $x\in B(F(L),\e)$. 

  \begin{figure}
    \centering
    \includegraphics[width=\textwidth]{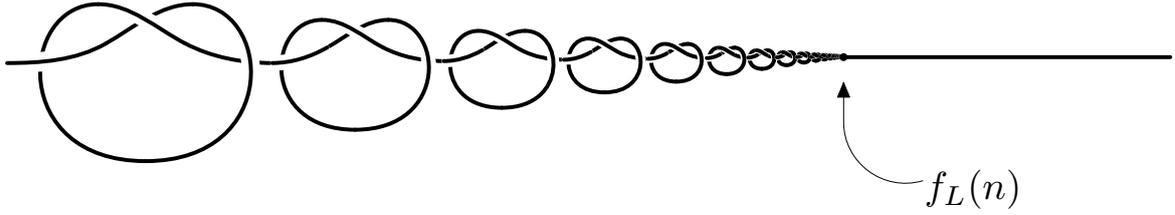}
    \caption{The singular arc whose copies are concatenated to form the knot $F(L)$. Figure produced by MetaPost.}
    \label{fig:SingularKnot}
  \end{figure}

  Suppose $L\cong L'$ are isomorphic structures in $\LO^*$
  and let $g\colon \N\to \N$ be a
  witnessing isomorphism. It induces an order preserving map $h_0$ from
  $f_{L}[L]$ onto $f_{L'}[L']$ defined by $f_L(n)\mapsto f_L(g(n))$. Let 
  $$W_n^L=\overline{V^L_n}\times \R^2$$
  and 
  $$W_n^{L'}=\overline{V^{L'}_n}\times \R^2.$$
  Call these sets ``walls''.
  Restricted to $W^L_n$, the domain of $h_0$ is just the singleton $\{f_L(n)\}$ which is
  mapped to $\{f_{L'}(g(n))\}$. Let 
  $h_1^n\colon \overline{B^L_n}\to \overline{B^{L'}_{g(n)}}$ 
  be a homeomorphism such that
  $h_1(f_L(n))=f_{L'}(g(n))=h_0(f_L(n))$, which
  takes $K^{L}_n$ to $K^{L'}_{g(n)}$. 
  This is possible because these arcs are equivalent.
  Let $M_n=\frac{1}{2}\max\{|V^L_n|,|V^{L'}_{g(n)}|\}$ and let 
  $$h_2^n\colon \overline{V^L_n}\times \sval{-M_n,M_n}^2\to \overline{V^{L'}_{g(n)}}\times \sval{-M_n,M_n}^2$$
  be a homeomorphism
  which extends $h_1^n$ and with the property that for all $x\in \sval{-M_n,M_n}^2$ and
  $z_1, z_2, z_1',z_2'$ the left and right endpoints of $V^L_n$ and $V^{L'}_{g(n)}$ respectively we have
  $$h^n_2(z_i,x)=(z_i',x).$$
  Finally define a homeomorphism $h_3^n\colon W^L_n\to W^{L'}_{g(n)}$
  which extends $h_2^n$
  such that
  \begin{enumerate}
  \item[] For every $(x,y_1,y_2)\in V^L_n\times \R^2$ 
    with $\max\{y_1,y_2\}>M_n$\\
    we have $h_3(x,y_1,y_2)=(x',y_1,y_2)$ for some $x'$. \hfill $(P)$
  \end{enumerate}
  Consider a pair $(n,m)$ such that $(n,m)\in S_L$ and $n<_L m$. Then
  by the definitions above for all $x\in W^L_n\cap W^L_m$ we will have
  $$h^n_3(x)=h^m_3(x)\eqno (Q)$$
  Let $h_a$ be the identity on $\R^3\setminus
  \sval{0,1}\times \R^2$.  Consider $x$ which is a cluster point of
  $\Im f_L$. Let $x'$ be the corresponding point in $\Im f_L$. 
  Define $h_r(x,y_1,y_2)=(x',y_1,y_2)$ for all $(y_1,y_2)\in \R^2$.
  By $(P)$ and $(Q)$ above and noting that $M_n\stackrel{k\to\infty}{\longrightarrow}0$, the union
  $$h=h_a\cup h_r\cup \Cup_{n\in\N}h_3^n$$ 
  is a well defined function and is a homeomorphism.

  On the other hand suppose there exists a
  homeomorphism $h$ which takes $F(L)$ to $F(L')$. Without loss of generality we
  can assume that $h(\infty)=\infty$, i.e. $h$ restricts to a homeomorphism of $\R^3$.
  Let $\Sigma_{L}$ be
  $\{x\in F(L)\mid x\text{ is a singularity}\}$, $I\Sigma_L$ be
  the set of isolated points in $\Sigma_L$ and similarly define $\Sigma_{L'}$ and $I\Sigma_{L'}$
  for~$L'$. 
  A singular point can only go to a singular point in a homeomorphism
  because all other points are locally unknotted but every neighborhood of
  a singular point contains trefoils as components (Definition~\ref{def:PropArc}). Hence
  $h\rest \Sigma_L$ is a homeomorphism onto~$\Sigma_{L'}$.
  $I\Sigma_L$ is a subset of the $x$-axis which we identified with $\R$,
  so we can define the order $<_\R$ on $I\Sigma_{L}$ and by \ref{thingyything}
  $f_n(L)$ is an order isomorphism from $(\N,<_L)$ to $(I\Sigma_L,<_\R)$ and 
  same for~$L'$.
  Being an isolated point is a topological property, so $h\rest I\Sigma_L$ is in fact
  a homeomorphism onto $h\rest I\Sigma_{L'}$. Also, it has to preserve the betweenness relation:
  suppose $x,y,z$ are points in $I\Sigma_{L'}$ such that $x<_{\R}y<_{\R}z$.
  Now $x$ and $z$ divide the knot into two arcs one of which is bounded in $\R^3$
  and $y$ is in this bounded arc. If $h$ mapped $y$ such that it is not $<_\R$-between
  $h(x)$ and $h(z)$, then it would mean that $h(y)$ is mapped to the unbounded component of
  $F(L')\setminus \{h(x),h(z)\}$ which is a contradiction.
  This implies that $(f_{L'})^{-1}\circ (h\rest I\Sigma_L)\circ f_L$ 
  is a bijection which is either order-preserving or order-reversing (because it preserves
  betweenness).
  But since we assumed that the elements of $\LO^*$ have a smallest element but no largest,
  mirror images are out of the question, so it is in fact an isomorphism.
\end{proof}

\section{Reducing a Turbulent Relation into~$E_K$}\label{sec:Turb}

Recall the equivalence relation $E^*$ from Definition~\ref{def:TheTurbEQ}.
The proof of the following theorem can be found after all
the preparations on page~\pageref{page:proofofmain}.

\begin{Thm}\label{thm:Main}
  The equivalence relation $E^*$ on $(S^1)^{\N}$
  (cf. Definition~\ref{def:TheTurbEQ}) is continuously reducible
  to~$E_K$.
\end{Thm}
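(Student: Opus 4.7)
The plan is to adapt the construction of Theorem~\ref{thm:LO}: place pairwise disjoint closed balls $\bar B_n$ on the $x$-axis with centers at $p_n = 1 - 2^{-n}$ and diameters $O(2^{-n})$, accumulating at the single point $p_\infty = (1,0,0)$, and string them together by the main strand along the $x$-axis (closed up through $\infty$). Instead of a singular trefoil, inside $\bar B_n$ we insert a tame ``angular widget'' $A_{x_n}$ that continuously encodes a parameter $\theta = x_n \in S^1$. The resulting knot $F(\bar x) \in \K$ is wild only at $p_\infty$, and $F$ is continuous for the same reason as in Theorem~\ref{thm:LO}: widget diameters vanish, so small changes in $\bar x$ change $F(\bar x)$ in a Hausdorff-small way.

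The key point is the widget design. For each $\theta \in S^1$ I would construct a tame proper arc $A_\theta \subset \bar B_n$ (a twist/framed tangle) satisfying three properties: (i) $\theta \mapsto (\bar B_n, A_\theta)$ is continuous in the Hausdorff metric; (ii) there is a canonical twist homeomorphism $t_{\theta, \theta'}\colon (\bar B_n, A_\theta) \to (\bar B_n, A_{\theta'})$ which is the identity on $\partial \bar B_n$ and whose sup-distance from $\id$ tends to $0$ as $|\theta - \theta'|_{S^1} \to 0$; (iii) the parameter $\theta$ is a rigid $S^1$-valued invariant, in the sense that any homeomorphism of pairs $(\bar B_n, A_\theta) \to (\bar B_n, A_{\theta'})$ differs from some $t_{\theta, \theta'}$ by an ambient isotopy that does not alter the angular offset $\theta' - \theta \in S^1$.

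Granting such widgets, the reduction works as follows. If $\bar x E^* \bar x'$ then $x_n - x_n' \to 0$ in $S^1$, so the map $h\colon S^3 \to S^3$ acting as $t_{x_n, x_n'}$ on each $\bar B_n$ and as the identity elsewhere is a homeomorphism (pieces glue by (ii), and continuity at $p_\infty$ follows from the twists vanishing) taking $F(\bar x)$ to $F(\bar x')$. Conversely, given a pair-homeomorphism $h$, the unique wild point $p_\infty$ is topologically distinguished and must map to $p_\infty'$; the widgets of $F(\bar x)$ approach $p_\infty$ in a fixed order determined by the geometry, and this order is preserved by $h$, yielding an eventually-identity bijection $\sigma$ of $\N$ matching widgets. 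Property (iii) then identifies $h\rest \bar B_n$ with the twist by angle $x_n - x'_{\sigma(n)} \in S^1$, and continuity of $h$ at $p_\infty$ forces $x_n - x'_{\sigma(n)} \to 0$, i.e.\ $\bar x E^* \bar x'$.

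The main obstacle, and the reason the proof is nontrivial, is realising genuine $S^1$-valued rigidity in~(iii). A pair-homeomorphism need not fix $\partial \bar B_n$, and a global rotation of $S^3$ around the $x$-axis near $p_\infty$ would shift every $x_n$ by the same phase $\phi \in S^1$, apparently producing an equivalence without $\bar x - \bar x' \in G$. To neutralise this global phase I would add a single ``anchor'' widget near $p_\infty$ whose angular parameter is pinned to $0$ (or equivalently an asymmetric ornament along the main strand), forcing any pair-homeomorphism to have $\phi = 0$. Producing a widget whose angular parameter is a true $S^1$-valued pair-invariant under arbitrary (rather than merely boundary-fixing) homeomorphisms, and gluing it into the main strand so that the boundary data is respected, is where I expect the delicate geometric input (presumably where Luisto's suggestions enter) to lie.
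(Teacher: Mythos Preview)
There is a genuine gap, and it sits exactly where you flag it: property~(iii) cannot be realized by tame widgets. If the family $\theta\mapsto A_\theta$ is obtained by rotating a fixed tame proper arc inside $\bar B_n$ (which is what the ``twist/framed tangle'' language suggests, and is also what is needed for your property~(ii)), then the very rotation of $\bar B_n$ by $\theta'-\theta$ is a homeomorphism of pairs $(\bar B_n,A_\theta)\to(\bar B_n,A_{\theta'})$, so all the $A_\theta$ are pair-equivalent and carry no $S^1$-valued invariant whatsoever. Your anchor only kills a \emph{global} phase; it does nothing against rotating a single $\bar B_n$ in isolation. More generally, tame proper arcs in a ball have only countably many pair-homeomorphism types, so there is no injective map from $S^1$ into them; any continuous family $\theta\mapsto A_\theta$ of tame arcs must collapse uncountably many parameters to the same type. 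In short, a local tame gadget cannot store a point of~$S^1$.

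The paper's construction is organized around precisely this obstruction, and takes a different route. There are no local widgets and no isolated balls; instead one builds a \emph{fractal} knot $K(\bar x)=\bigcap_n\Im\hat g_n^{\bar x}$ as a nested intersection of solid tori, where at stage $n$ the $n$-th prime knot $K_n$ is spliced in at angular position $x_n$ on the core circle (via the conjugated embedding $\theta^{x_n}g_n\theta^{-x_n}$). The datum $x_n$ is thus a \emph{position}, not a type, and it is read off not from a single ball but from the global limiting structure: the paper introduces the notion of having a prime knot ``strongly as a component'' (Definition~\ref{def:asacomponent}) and shows (Propositions~\ref{prop:hasasac}--\ref{prop:hasnottasac}) that the accumulation points of $(x_n)$ along subsequences are detected by which prime summands appear strongly near $f^{\bar x}(a)$. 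The forward direction is a delicate convergence argument (Propositions~\ref{prop:anotherineq2}--\ref{prop:EquivalentKnots}) showing that when $x_n-x_n'\to 0$ the twist homeomorphisms $H_n$ converge on the complement and extend across the knot. The role of your ``anchor'' is played not by a single gadget but by interleaving $\bar x$ with a fixed dense sequence $(z_k)$, yielding the set $A$ of even indices needed in Proposition~\ref{prop:NonEquivalentKnots}; density of the anchors, not a single reference point, is what makes the non-equivalence argument go through.
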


Let us begin with some preliminaries in topology. 
For metric spaces $X$ and $Y$, where $Y$ compact, let
$C(X,Y)$ be the space of all continuous maps from $X$ to $Y$ equipped
with the sup-metric. 
A map $f\colon X\to Y$ is $L$-\emph{Lipschitz}, for some $L>0$, if for all $x,y\in X$
$$d_Y(f(x),f(y))\le Ld_X(x,y).$$
It is $L$-\emph{colipschitz}, if for all $x,y\in X$
$$d_Y(f(x),f(y)) \ge \frac{1}{L}d_X(x,y).$$
It is $L$-bilipschitz, if it is both $L$-Lipschitz and $L$-colipschitz.
Note that the notion of $L$-bilipschitz makes sense only when $L\ge 1$.
Also note that if $f$ is $L$-colipschitz, then $f$ has an inverse 
and this inverse is $L$-Lipschitz. Thus, one can say that a function is $L$-bilipschitz
if both it and its inverse are $L$-Lipschitz.

A map between metric spaces is \emph{locally $L$-Lipschitz} if
for every point in the domain there exists $\e$ so that the function is
$L$-Lipschitz restricted to the $\e$-neighborhood of this point.
Analogously define \emph{locally $L$-colipschitz} and \emph{locally $L$-bilipschitz}.
The length of a path in a metric space is defined through the supremum of the
lengths of finite polygons approximating the path. Assuming that there are 
paths of finite length between any two given points (call such spaces \emph{length-spaces}),
one can define the \emph{path metric} $d^p$ given by the infimum of all path lengths
from one point to another. All spaces in this paper are length spaces.
The following is a standard theorem of metric topology.

\begin{Prop}\label{prop:BilipPathMetric}
  Suppose $f\colon X\to Y$ is a locally $L$-Lipschitz homeomorphism for length spaces $X,Y$.
  Then $f$ is $L$-Lipschitz as a function from
  from $(X,d^p_X)$ to $(Y,d^p_Y)$ where $d^p_X$ and 
  $d^p_Y$ are path metrics on $X$ and $Y$ respectively. Same for ``Lipschitz'' replaced
  by ``colipschitz''. \qed
\end{Prop}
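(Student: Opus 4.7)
The plan is to reduce everything to the statement that a locally $L$-Lipschitz map sends each rectifiable path to a path of length at most $L$ times the original length, and then to take infima.

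First I would handle the Lipschitz case. Fix $x,y\in X$ and $\e>0$, and pick a rectifiable path $\g\colon \sval{0,1}\to X$ from $x$ to $y$ with $\lg(\g)\le d^p_X(x,y)+\e$. The image $\g[\sval{0,1}]$ is compact, so by local $L$-Lipschitzness of $f$ and a Lebesgue-number argument applied to the pullback of a cover by $L$-Lipschitz neighborhoods, I can find a partition $0=t_0<t_1<\dots<t_N=1$ such that each $\g[\sval{t_i,t_{i+1}}]$ lies in a neighborhood on which $f$ is genuinely $L$-Lipschitz. On each such piece, for any refinement $t_i=s_0<\dots<s_k=t_{i+1}$ one has
\[
\sum_j d_Y\bigl(f(\g(s_j)),f(\g(s_{j+1}))\bigr)\le L\sum_j d_X\bigl(\g(s_j),\g(s_{j+1})\bigr)\le L\cdot \lg(\g\rest\sval{t_i,t_{i+1}}),
\]
so $\lg(f\circ\g\rest\sval{t_i,t_{i+1}})\le L\cdot\lg(\g\rest\sval{t_i,t_{i+1}})$. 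Summing over $i$ yields $\lg(f\circ\g)\le L\cdot\lg(\g)$, hence
\[
d^p_Y(f(x),f(y))\le \lg(f\circ\g)\le L(d^p_X(x,y)+\e),
\]
and letting $\e\to 0$ gives the Lipschitz half.

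For the colipschitz half, I would first upgrade local $L$-colipschitzness of $f$ to local $L$-Lipschitzness of $f^{-1}$. If $f$ is $L$-colipschitz on some $B_X(x,\d)$, then since $f$ is a homeomorphism $f[B_X(x,\d)]$ is an open neighborhood of $f(x)$, and for any two points $y_1,y_2$ in this neighborhood, writing $y_i=f(z_i)$ with $z_i\in B_X(x,\d)$,
\[
d_X(f^{-1}(y_1),f^{-1}(y_2))=d_X(z_1,z_2)\le L\,d_Y(f(z_1),f(z_2))=L\,d_Y(y_1,y_2).
\]
Thus $f^{-1}$ is locally $L$-Lipschitz, and applying the already-proved half to $f^{-1}$ gives $d^p_X(x,y)\le L\,d^p_Y(f(x),f(y))$, i.e.\ $f$ is $L$-colipschitz in the path metrics.

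The only real subtlety is the compactness/Lebesgue-number step that turns a local Lipschitz bound along a compact image into finitely many global Lipschitz bounds; everything else is additivity of path length and passage to the infimum. Apart from that, the argument is a routine chase through the definition of $d^p$.
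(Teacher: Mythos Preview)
The paper does not actually supply a proof of this proposition: it is introduced as ``a standard theorem of metric topology'' and the statement ends with \verb|\qed|, so there is nothing to compare against line by line. Your argument is correct and is precisely the standard one that the paper is implicitly invoking: show that a locally $L$-Lipschitz map multiplies the length of each rectifiable curve by at most $L$ via a compactness/Lebesgue-number subdivision, then pass to the infimum over curves; for the colipschitz half, convert to local $L$-Lipschitzness of $f^{-1}$ and reuse the first half. The only step that requires any care---getting from ``locally $L$-Lipschitz along a compact image'' to a finite partition on which the global Lipschitz bound holds---you identified and handled correctly.
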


Denote by $D^2_r=\bar B_{\R^2}(0,r)$ the two dimensional closed
disk with radius $r$. Let $T_r$ be the solid torus
$T_r=D^2_r\times S^1$. The metric on this torus is the product metric:
$d_{T_r}((x,s),(y,t))=\sqrt{d_{\R^2}(x,y)^2 + d_{S^1}(x,y)^2}$. Note that
from our parametrization of $S^1$ (Definition~\ref{def:TheTurbEQ})
this metric coincides with the path metric on~$T_r$, i.e. it 
is in a sense ``convex''.

We define a tubular neighborhood in a little bit non-standard way.
For a proper arc $f\colon I\to \bar B_0$, as in Definition~\ref{def:PropArc},
the set $T(f,\e)=\bar B_{S^3}(f[I],\e)\cap \bar B_0$ is a \emph{tubular neighborhood} of $f$
if there exists an embedding
$h\colon D^2_\e\times I\to S^3$
such that 
\begin{itemize}
\item $\Im h \cap \bar B_0=T(f,\e)$,
\item $h(\bar 0,s)=f(s)$ for all $s\in I$,
\item for all $\e'<\e$ and $x\in \Im(f)$ the set $B(x,\e')\cap f[I]$
  is connected and represents an unknotted proper arc in~$B(x,\e)$,
\item $h\rest D^2_r\times \{s\}$ is an isometry onto the disc $D\subset \R^3$
  which is orthogonal to $f$ at $f(s)$ and whose middle point is $f(s)$.
\end{itemize}
Similarly define this for a knot $f\colon S^1\to \R^3$ just replacing $I$
with $S^1$.
The following is a standard fact in knot theory and differential topology in 
general:

\begin{Thm}\label{thm:Tub}
  For every smooth proper arc or a knot
  $f$ there is $\e>0$ such that $T(f,\e)$ is a
  tubular neighborhood of~$f$. 
\end{Thm}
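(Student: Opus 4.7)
The plan is to reduce this to the standard tubular neighborhood theorem for smooth embeddings by constructing $h$ explicitly from a smooth orthonormal framing of the normal bundle of $f$, and then to verify the four bulleted conditions by hand.

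First, using smoothness of $f$, I would pick a unit tangent $T(s)=f'(s)/|f'(s)|$ and, exploiting that the oriented rank-$2$ normal bundle of a smooth embedded curve in $\R^3$ is trivializable, a smooth orthonormal frame $N_1(s), N_2(s)$ of the plane orthogonal to $f'(s)$. Then I would define
$$h(v_1,v_2,s)=f(s)+v_1 N_1(s)+v_2 N_2(s)$$
as a smooth map $D^2_\e\times I\to S^3$ (and with $S^1$ in place of $I$ in the knot case). The second bullet $h(\bar 0,s)=f(s)$ holds by construction, and the fourth bullet is immediate since each slice $h\rest D^2_\e\times\{s\}$ is an affine isometry onto the disc of radius $\e$ in the plane perpendicular to $f$ at $f(s)$.

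Next I would show $h$ is an embedding for sufficiently small $\e$. At $v=0$ the Jacobian of $h$ has the linearly independent columns $N_1(s), N_2(s), |f'(s)|T(s)$, so by the inverse function theorem applied uniformly (compactness of $S^1$ or $I$) there exist $\e_0,\delta_0>0$ such that $h$ is a diffeomorphism on $D^2_{\e_0}\times (s-\delta_0, s+\delta_0)$ for every $s$. The crux is upgrading to global injectivity: if this failed for every $\e>0$, one would obtain sequences of distinct $(v_n,s_n), (w_n,t_n)$ with $|v_n|,|w_n|\to 0$ and $h(v_n,s_n)=h(w_n,t_n)$; convergent subsequences $s_n\to s^\ast$, $t_n\to t^\ast$ then force $f(s^\ast)=f(t^\ast)$, hence $s^\ast=t^\ast$ by injectivity of $f$, contradicting the local diffeomorphism on the $\delta_0$-neighborhood of $s^\ast$. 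A parallel compactness argument shows that for small $\e$ every point of the $\e$-neighborhood of $f[I]$ in $S^3$ admits a unique nearest-point projection onto $f$, which realizes it as $f(s)+v$ for some normal $v$; this yields the first bullet $\Im h\cap\bar B_0=T(f,\e)$.

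Finally, for the third bullet I would use that at $x=f(s_0)$ smoothness of $f$ makes $f[I]\cap B(x,\e')$ a single subarc $C^1$-close to the tangent line for all small $\e'<\e$, hence ambient isotopic within $B(x,\e)$ to a diameter and so unknotted. The main obstacle throughout is the passage from local to global injectivity together with its twin, the reach estimate for the nearest-point projection; both are standard compactness arguments but are where the positive uniform $\e$ is actually produced. In the proper-arc case a small additional check is needed at the endpoints: transversality of a smooth proper arc to $\partial\bar B_0$ combined with the normal-tube construction guarantees that $h[D^2_\e\times I]$ crosses $\partial\bar B_0$ cleanly, so intersecting with $\bar B_0$ cuts out exactly $T(f,\e)$.
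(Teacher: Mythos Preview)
Your argument is correct and follows essentially the same route as the paper: build $h$ from a smooth orthonormal framing of the normal plane, then use the inverse function theorem plus a compactness/limit argument to pass from local to global injectivity for small~$\e$. The one substantive difference is that the paper constructs the frame explicitly---it picks a fixed direction $s_0\in S^2$ not hit by the tangent map and projects it orthogonally onto each normal plane to get $n_s$, then sets $b_s=f'(s)\times n_s/|f'(s)\times n_s|$---whereas you invoke trivializability of the normal bundle abstractly. Both are fine for the theorem as stated, but be aware that the paper's explicit $g(x,y,s)=xn_s+yb_s+f(s)$ is reused verbatim in the proof of Lemma~\ref{lemma:TubularBilipschitz} to compute the Jacobian and obtain the bilipschitz constant, so if you keep your abstract version you will need to revisit that lemma.
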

\begin{proof}
  Let us prove this for a knot $f\colon S^1\to \R^3$, so the result for a proper arc
  follows: every proper arc can be extended to a knot by connecting the end-points in the ambient space.
  We can assume without loss of generality that $|f'(s)|>0$ for all $s$.
  Let $G\subset S^2$ be the set of all the directions of the gradient:
  $$G=\big\{\frac{f'(s)}{|f'(s)|}\mid s\in S^1\big\}.$$
  Since $f$ is smooth and $S^1$ compact, $G$ is nowhere dense, so 
  there exists $s_0\in S^2\setminus G$. For every $s\in S^1$ let $M_s$ be the $2$-dimensional 
  subspace of $\R^3$ orthogonal to $f'(s)$ and let $n_s$ be the orthogonal projection of
  $s_0$ to $M_s$ which is normalized to length~$1$ (since $s_0\notin G$, the orthogonal projection
  is non-zero). Since $f$ is smooth, $s\mapsto n_s$ is also smooth and $n_s$ is normal to $f'(s)$.
  Let 
  $$b_s=\frac{f'(s)\times n_s}{|f'(s)\times n_s|}.$$
  Now define the following map:
  $$g\colon \R^2\times S^1\to \R^3,$$
  $$g(x,y,s)=xn_s+yb_s+f(s).$$
  By the smoothness of $f$, $s\mapsto n_s$ and $s\mapsto b_s$, $g$ is also smooth.
  By smoothness and compactness one finds $\e$ such that restricted to $D^2_\e\times S^1$,
  $g$ is injective. It is easy to see that all the conditions for tubular neighborhood
  are satisfied in particular because $f[S^1]$ is now a strong 
  deformation retract of $g[D^2_\e\times S^1]$,
  so restricted
  to $D^2_\e\times S^1$, $g$ witnesses that $T(f,\e)$ is a tubular neighborhood.
\end{proof}

If $\e$ is such that $T(f,\e)$ is a tubular
neighborhood, we say that $\e$ is \emph{tubular for}~$f$. It is easy to see that
if $\e$ is tubular for $f$, then so is every $\e'<\e$.

\begin{Lemma}\label{lemma:asacomponent}
  Suppose $f\colon I\to \bar B$ is a smooth proper arc in a closed
  ball $\bar B=\bar B(x,r)$.  Suppose $\e$ is tubular for $f$.  Then
  any tame proper arc $f_0\in B_{C(I,\bar B)}(f,\e)$ has $f$ as a
  component.
\end{Lemma}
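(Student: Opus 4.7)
The plan is to use the tubular structure to rewrite $(\bar B,f_0)$ as a knot-sum of $(\bar B,f)$ with a localised ``bead'' of extra knottedness, after which $(\bar B,f)$ is extracted as a sub-pair by excising the bead together with a thin tunnel to $\partial\bar B$. To begin, the hypothesis $\|f_0-f\|_\infty<\e$ combined with the definition of $T(f,\e)$ gives $f_0[I]\subset T(f,\e)$. Via the embedding $h_T$ guaranteed by tubularity, $T(f,\e)$ is topologically a $3$-ball (a solid cylinder $D^2_\e\times I$) in which $f$ corresponds to the unknotted core axis $\{0\}\times I$, so $f_0$ is a tame proper arc inside this $3$-ball.

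Next I would invoke the classical ``concentration of knottedness'' fact for tame proper arcs in a $3$-ball: given two tame proper arcs in a $3$-ball sharing endpoints, one of which is unknotted, there is an ambient isotopy fixing the boundary of the ball that carries the first arc to one coinciding with the unknotted arc outside an arbitrarily small interior sub-ball. Applied inside $T(f,\e)$ to the pair $(f_0,f)$, after a negligible preliminary adjustment matching the endpoints of $f_0$ to those of $f$, this yields a homeomorphism $\Phi_1\colon T(f,\e)\to T(f,\e)$, identity on $\partial T(f,\e)$, with $\Phi_1(f_0)=f$ outside a small sub-ball $\bar B^*\subset\intr(T(f,\e))$ and $\Phi_1(f_0)\cap\bar B^*$ a knotted arc sharing the entry/exit points $p_1,p_2\in\partial\bar B^*$ with $f$. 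Extending $\Phi_1$ by the identity across $\bar B\setminus T(f,\e)$ gives a homeomorphism $\Phi$ of $\bar B$, so that $(\bar B,f_0)\sim(\bar B,\Phi(f_0))$.

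With $\Phi(f_0)$ agreeing with $f$ off the bead, I construct $\bar B_1$ by a ``tadpole'' excision. Labelling $p_1$ as the entry point closer to $f(0)$, let $f^{(1)}$ be the sub-arc of $f$ from $f(0)$ to $p_1$, and take $N^*$ to be a very thin closed tubular neighborhood of $f^{(1)}$ meeting $\partial\bar B$ in a small disk around $f(0)$ and meeting $\partial\bar B^*$ in a small disk around $p_1$, chosen thin enough that $N^*$ is disjoint from $\Phi(f_0)\setminus(N^*\cup\bar B^*)$. Then $N^*\cup\bar B^*$ is a $3$-ball attached to $\partial\bar B$ along a boundary disk, so $\bar B_1:=\bar B\setminus\intr(N^*\cup\bar B^*)$ is itself a $3$-ball. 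Moreover $\Phi(f_0)\cap\bar B_1$ equals the sub-arc $f^{(2)}$ of $f$ from $p_2$ to $f(1)$, a proper arc in $\bar B_1$. Since $f$ is smooth, $f^{(1)}$ is a locally unknotted end-segment, so $(\bar B_1,f^{(2)})\sim(\bar B,f)$; the required homeomorphism is built by re-growing the excised tadpole back into $\bar B_1$ along $\partial\bar B_1$ near $p_2$, restoring the original pair. Setting $(\bar B_0,h):=(\bar B,\Phi(f_0))\sim(\bar B,f_0)$ and using this $\bar B_1$ thus witnesses $f$ as a component of $f_0$.

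The main obstacle I anticipate is the concentration-of-knottedness step: although classical for tame proper arcs in a $3$-ball, it must be arranged with the straightening homeomorphism pinned to the identity on $\partial T(f,\e)$ (so as to extend by identity to $\bar B$), and one needs to absorb the small discrepancy between the endpoints of $f$ and those of $f_0$ inside the end disks of the tubular embedding $h_T$. Everything else is routine $3$-dimensional topology of balls, sub-balls, and arcs attached along disks.
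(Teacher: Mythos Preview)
Your approach is correct and shares the paper's overall strategy---pull back via the tubular embedding, concentrate the extra knottedness of $f_0$ away from most of the core, then exhibit a sub-ball realising $f$ as a component---but the execution differs in two places. Where you invoke a black-box concentration-of-knottedness fact to push everything into an interior bead $\bar B^*$, the paper does this explicitly and pushes toward one \emph{end} of the cylinder: since $f_0$ is tame, $g^{-1}\circ f_0$ is unknotted in a thin end-slab $D^2_{\e'}\times[0,\d]$, and one simply stretches this slab by a pair-homeomorphism $H$ onto $D^2_{\e'}\times[0,1-\d]$ carrying the arc to the core, then extends to $H_1$ on $D^2_\e\times I$ fixing $\partial D^2_\e\times I$ pointwise. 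This is in effect the proof of your cited fact in the one instance needed, so the paper's route is self-contained and also absorbs the endpoint-matching issue you flag. For the sub-ball, you excise a tadpole $N^*\cup\bar B^*$; the paper instead passes to a slightly shrunk concentric ball $\bar B(x,r-\d_1)$, with $\d$ chosen so that the end-segments $f[0,\d]$ and $f[1-\d,1]$ (and hence the residual knottedness after $H_1$) lie outside it. Your tadpole construction is valid and arguably makes the component structure more transparent; the paper's version is terser but leans on carefully tuning the parameters $\d_0,\d_1,\d_2,\d_3$.
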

\begin{proof}
  Suppose $f_0\in B_{C(I,\bar B)}(f,\e)$ is a tame proper arc. Without loss
  of generality $I=\sval{0,1}$ is the unit interval.
  Suppose $g\colon D^2_\e\times I\to S^3$ is a homeomorphism witnessing that
  $T(f,\e)$ is a tubular neighborhood. 
  There is a $\d_0$ such that 
  $$(D_\e^2\times \sval{0,\d_0}, \g_0),$$
  where $\g_0=\Im (g^{-1}\circ f_0)\cap D_\e^2\times \sval{0,\d_0}$,
  is unknotted.
  Let $\d_1$ be so small that 
  $$(\bar B(x,r-\d_1),f\cap \bar B(x,r-\d_1))$$
  is a connected proper arc equivalent to $(\bar B, f)$. This can be found again by
  compactness and smoothness. Let $\d$ be the minimum of $\d_0$
  and the following two:
  $$\sup \{\d_2\mid f[0,\d_2]\cap \bar B(x,r-\d_1)=\es\}.$$
  $$\sup \{\d_3\mid f[1-\d_3,1]\cap \bar B(x,r-\d_1)=\es\}.$$

  Let $\e'<\e$ be such that $g^{-1}\circ f_0\subset D^2_{\e'}\times I$ which exists by
  compactness and the fact that $g^{-1}\circ f_0$ is in the interior of $D^2_\e\times I$.
  Denote $\g=\Im (g^{-1}\circ f_0)\cap D_{\e'}^2\times \sval{0,\d}$
  Since now
  $$(D_{\e'}^2\times \sval{0,\d}, \g)$$
  is an unknotted proper arc, there is a homeomorphism of pairs
  $$H\colon (D_{\e'}^2\times \sval{0,\d},\g)\to (D_{\e'}^2\times \sval{0,1-\d},\{0,0\}\times \sval{0,1-\d}).$$
  Extend $H$ to $H_1\colon D_{\e}^2\times I\to D_{\e}^2\times I$ so that $H_1$ fixes 
  $(\partial D^2_\e)\times I$ point-wise. Now $g\circ H_1\circ g^{-1}$ can be extended to 
  a homeomorphism of $\bar B$ (by just identity outside $\Im g$) witnessing that 
  $f_0$ has $f$ as a component.
\end{proof}

For our purposes we have to redefine the notion of a component of a knot.
In Definition~\ref{def:PropArc} we gave a fairly standard definition of what that means.
However, it is not suitable for the kind of fractal knots we are dealing with in this
section. For example the knot $K(\bar x)$ that we are going to define is not
smooth at any point, so it will not contain any tame knot as a component.
On the other hand it is built as a limit of tame knots $K_n$ such that every prime knot
appears as a component of $K_n$ for sufficiently large~$n$. So we want to redefine the
notion of a component to apply to such wild knots in a more natural way.

\begin{Def}\label{def:asacomponent}
  Let $f\colon I\to \bar B$ be a proper arc (not necessarily tame)
  and $g$ another proper arc. We say that $f$ \emph{has $g$ strongly as a component}
  if there exists an $\e$ such that every tame proper arc
  $f_1\in B_{C(I,\bar B)}(f,\e)$ has $g$ as a component.
  By the above Lemma, if $f$ is smooth, then it has $g$ strongly as a component
  if and only if it has it as a component, and this is witnessed by any $\e$ which
  is tubular for~$f$.
\end{Def}

Having $g$ strongly as a component is an invariant property:

\begin{Lemma}\label{lemma:invarianceofcomp}
  If $f\colon I\to \bar B$ has $g$ strongly as a component and $H\colon \bar B\to \bar B'$ is
  a homeomorphism, then also $(\bar B',H\circ f)$ has $g$ strongly as a component.
\end{Lemma}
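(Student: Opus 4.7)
The plan is to transfer the witnessing radius $\varepsilon$ for $(\bar B,f)$ over to a witnessing radius $\varepsilon'$ for $(\bar B', H\circ f)$ via a modulus of continuity for $H^{-1}$, and then check the two ``preservation under ambient homeomorphism'' compatibilities: that tameness and having $g$ as a component both transfer across $H^{-1}$.

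First I would fix $\varepsilon>0$ that witnesses strong $g$-containment for $f$, i.e. such that every tame proper arc in $B_{C(I,\bar B)}(f,\varepsilon)$ has $g$ as a component. Since $\bar B'$ is compact and $H^{-1}\colon \bar B'\to \bar B$ is continuous, I can choose $\varepsilon'>0$ with the property that $d_{\bar B'}(x,y)<\varepsilon'$ forces $d_{\bar B}(H^{-1}(x),H^{-1}(y))<\varepsilon$; this will be the candidate witness.

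Next, take an arbitrary tame proper arc $f_1'\in B_{C(I,\bar B')}(H\circ f,\varepsilon')$ and set $f_1=H^{-1}\circ f_1'$. Applied pointwise in $s\in I$, the choice of $\varepsilon'$ gives $f_1\in B_{C(I,\bar B)}(f,\varepsilon)$, using $f = H^{-1}\circ(H\circ f)$. Now I need to know that $f_1$ is itself a tame proper arc: it is a proper arc because $H^{-1}$ takes $\partial \bar B'$ onto $\partial \bar B$ (automatic for any homeomorphism between closed balls, by invariance of the boundary), and it is tame because composing a homeomorphism of pairs $(\bar B',f_1')\to(\bar B_0,P)$ (with $P$ a finite polygon) with $H$ on the right produces a homeomorphism of pairs $(\bar B,f_1)\to(\bar B_0,P)$.

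By the defining property of $\varepsilon$, the pair $(\bar B,f_1)$ then has $g$ as a component. Since $H^{-1}$ is a homeomorphism of pairs between $(\bar B',f_1')$ and $(\bar B,f_1)$, and ``having $g$ as a component'' is independent of the choice of representative (remarked right after Definition~\ref{def:PropArc}), the pair $(\bar B',f_1')$ also has $g$ as a component. As $f_1'$ was an arbitrary tame proper arc in $B_{C(I,\bar B')}(H\circ f,\varepsilon')$, this shows $\varepsilon'$ witnesses strong $g$-containment for $H\circ f$. I do not expect any real obstacle here; the only two points that need care are the invariance of tameness and of component-containment under ambient homeomorphisms, and both are essentially immediate from the definitions by precomposing with $H$.
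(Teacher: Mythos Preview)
Your proof is correct and essentially the same as the paper's. The only cosmetic difference is that the paper phrases the choice of $\varepsilon'$ via the induced homeomorphism $\bar H\colon C(I,\bar B)\to C(I,\bar B')$ and its continuity at $H\circ f$, whereas you obtain $\varepsilon'$ from uniform continuity of $H^{-1}$ on the compact $\bar B'$; these are equivalent (yours is even slightly stronger since one $\varepsilon'$ works uniformly), and you are a bit more explicit about why tameness and ``having $g$ as a component'' transfer along the ambient homeomorphism.
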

\begin{proof}
  Suppose $\e$ witnesses that $f$ has $g$ strongly as a component. $H$ induces a homeomorphism
  $\bar H$ of $C(I,\bar B)$, so there is $\e'$ such that 
  $$\bar H^{-1}[B_{C(I,\bar B)}(\bar H(f),\e')]\subset B_{C(I,\bar B)}(f,\e).$$
  Given any tame arc $h\in B_{C(I,\bar B)}(H\circ f,\e')$, it is equivalent to
  $\bar H^{-1}(h)=H^{-1}\circ h$ which is a tame arc in $B_{C(I,\bar B)}(f,\e)$
  and so by the definition of having $g$ strongly as a component, has $g$ as a component.
\end{proof}

\begin{Lemma}\label{lemma:HerComp}
  Let $f\colon I\to \bar B$ be a proper arc and suppose that it has
  $K$ strongly as a component which is witnessed by~$\e$.  Suppose
  $f'\colon I\to \bar B$ satisfies $d(f',f)<\e/2$.  Then $f'$ has $K$
  strongly as a component.
\end{Lemma}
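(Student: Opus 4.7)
The plan is essentially a one-line triangle-inequality argument: the definition of having $K$ strongly as a component is, by design, an \emph{open} condition in $C(I,\bar B)$, so it automatically propagates to small $C^0$-perturbations. Concretely, I would propose $\e/2$ as the witnessing constant for $f'$.

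First I would unpack the hypothesis: $\e$ witnesses that $f$ has $K$ strongly as a component, meaning every tame proper arc $f_1$ with $d_{C(I,\bar B)}(f_1,f) < \e$ has $K$ as a component in the sense of Definition~\ref{def:PropArc}. Then I would let $f'$ be given with $d(f',f) < \e/2$, and set $\e' := \e/2$. Given any tame proper arc $f_1 \in B_{C(I,\bar B)}(f',\e/2)$, the triangle inequality in the sup-metric gives
$$d(f_1,f) \le d(f_1,f') + d(f',f) < \e/2 + \e/2 = \e,$$
so $f_1 \in B_{C(I,\bar B)}(f,\e)$, and by hypothesis $f_1$ has $K$ as a component. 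Therefore $\e' = \e/2$ witnesses that $f'$ has $K$ strongly as a component.

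There is no real obstacle: the lemma is simply recording that Definition~\ref{def:asacomponent} is open in the sup-metric, which is the whole point of formulating ``strongly as a component'' via an open ball in $C(I,\bar B)$. The only thing worth being careful about is that one checks the condition for tame arcs within the \emph{smaller} ball around $f'$, which is the easy direction of the inclusion of balls; no compactness, smoothness, or tubular-neighbourhood input from Lemma~\ref{lemma:asacomponent} is needed at this stage.
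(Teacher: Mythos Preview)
Your proof is correct and is essentially identical to the paper's: both choose $\e/2$ as the witnessing constant for $f'$ and use the triangle inequality in the sup-metric to show that any tame arc in $B_{C(I,\bar B)}(f',\e/2)$ lies in $B_{C(I,\bar B)}(f,\e)$.
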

\begin{proof}
  Trivial.
\end{proof}

Before the next Lemma, let us state a well known fact from differential geometry:

\begin{Fact}(Folklore)
  The operator norm of an $(n\times n)$-matrix $A$ is given by
  $$\sup\{|A\bar x| \colon |\bar x|=1,\bar x\in\R^n\}.$$
  A smooth function $f$ from $n$-manifold $N$ to $n$-manifold $M$ 
  is locally $(L+\e)$-Lipschitz
  at $x\in N$ for all $\e>0$ if and only if the operator norm of the Jacobian
  at $x$ is at most~$L$. \qed
\end{Fact}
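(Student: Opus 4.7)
The operator norm formula is immediate: by definition $\|A\|=\sup_{\bar x\neq 0}|A\bar x|/|\bar x|$, and by homogeneity this supremum is unchanged if one restricts to the unit sphere $\{|\bar x|=1\}$, where it is attained by compactness and continuity of $\bar x\mapsto |A\bar x|$. So the nontrivial content is the Jacobian characterization of local Lipschitz constants, which I would prove by working in local charts around $x$ and $f(x)$ and reducing to the Euclidean case $f\colon U\to\R^n$ with $U\subset\R^n$ open.

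For the ``if'' direction, assume $\|Df(x)\|\le L$ and fix $\e>0$. Smoothness of $f$ implies continuity of $y\mapsto\|Df(y)\|$ (the operator norm is a continuous function of the matrix entries), so there is a convex neighborhood $V$ of $x$ on which $\|Df(y)\|\le L+\e$. For $y,z\in V$ the fundamental theorem of calculus applied to $t\mapsto f(y+t(z-y))$ gives
$$f(z)-f(y)=\int_0^1 Df(y+t(z-y))(z-y)\,dt,$$
and taking norms together with the operator-norm bound yields $|f(z)-f(y)|\le(L+\e)|z-y|$, which is local $(L+\e)$-Lipschitz continuity at $x$.

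For the ``only if'' direction, suppose $f$ is locally $(L+\e)$-Lipschitz at $x$ for every $\e>0$. Pick any unit vector $\bar v\in\R^n$. Using differentiability,
$$|Df(x)\bar v|=\lim_{t\to 0^+}\frac{|f(x+t\bar v)-f(x)|}{t}\le L+\e,$$
since for $t$ sufficiently small the chord on the right is bounded by $(L+\e)\,t|\bar v|=(L+\e)\,t$. Letting $\e\to 0$ gives $|Df(x)\bar v|\le L$, and taking the supremum over $|\bar v|=1$ and using the first half of the statement gives $\|Df(x)\|\le L$.

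The only real subtlety, and the main thing one has to verify carefully, is that passing to charts on the manifolds $N$ and $M$ does not distort the Lipschitz constant at the point $x$: one wants charts that are, in the relevant Riemannian sense, isometries at $x$ on the tangent spaces. Normal (or simply linearly rescaled) coordinates centered at $x$ and at $f(x)$ provide such charts, and a standard squeeze argument (any such chart is $(1+o(1))$-bilipschitz on a shrinking neighborhood) shows that the local Lipschitz constant in the manifold metric agrees, in the limit, with the one computed in the Euclidean chart. This reduces everything to the linear-algebra calculation above.
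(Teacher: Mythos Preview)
Your argument is correct. The paper, however, does not prove this statement at all: it is recorded as a folklore \textbf{Fact} and closed immediately with \qed, so there is no proof in the paper to compare against. What you have written is a clean standard proof --- the mean value inequality via the integral form of the fundamental theorem for the forward direction, and the directional-derivative limit for the converse --- together with the appropriate caveat about reducing to Euclidean coordinates via charts that are isometries on tangent spaces at the relevant points. In short, you have supplied a full proof where the paper chose to cite folklore; nothing is missing.
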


\begin{Lemma}\label{lemma:TubularBilipschitz}
  If $f\colon S^1\to \R^3$ is a smooth curve which is locally $L$-bilipschitz
  for some $L\ge 1$, then for
  every $\e>0$ it has a tubular neighborhood which is realized by a locally
  $(L+\e)$-bilipschitz homeomorphism $g\colon D^2_r \times S^1\to T(f,r)$ for some~$r$.
\end{Lemma}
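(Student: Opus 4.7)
The plan is to refine the tubular neighborhood embedding constructed in the proof of Theorem~\ref{thm:Tub}: the same smooth map $g(x,y,s) = xn_s + yb_s + f(s)$ from $D^2_{\e_0} \times S^1$ to $T(f,\e_0)$ will work, one just needs to show that by shrinking $\e_0$ to a small $r$ it becomes locally $(L+\e)$-bilipschitz. Since $f$ and the frame $(n_s, b_s)$ are smooth, $g$ is smooth, so we can analyze its Jacobian via the Fact preceding the Lemma.

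At a point $(0,0,s)$ on the central circle, the cross terms $x n_s' + y b_s'$ in $\partial g/\partial s$ vanish, so the columns of the Jacobian $J_g(0,0,s)$ are simply $n_s$, $b_s$, and $f'(s)$. These three vectors are mutually orthogonal with lengths $1$, $1$, $|f'(s)|$ respectively. Since $f$ is smooth and locally $L$-bilipschitz, one has $|f'(s)| \in [1/L, L]$ for every $s$, hence the singular values of $J_g(0,0,s)$ all lie in $[1/L, L]$; in particular the operator norms of both $J_g(0,0,s)$ and of its matrix inverse are at most $L$.

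By smoothness of $g$ and compactness of the central circle, the operator norms of $J_g(p)$ and $J_g(p)^{-1}$ depend continuously on $p = (x,y,s)$. Hence we can choose $r > 0$, also tubular for $f$, small enough that both operator norms are bounded by $L+\e/2$ throughout $D^2_r \times S^1$. Applying the Fact to $g$ and to its smooth inverse $g^{-1}$ then yields that $g$ is locally $(L+\e)$-Lipschitz and locally $(L+\e)$-colipschitz, i.e. locally $(L+\e)$-bilipschitz. The only step with any real content is the Jacobian computation on the central circle; once that is in hand, the rest is a routine continuity-and-compactness argument combined with the stated Fact.
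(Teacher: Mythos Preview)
Your proposal is correct and follows essentially the same route as the paper: both use the explicit map $g(x,y,s)=xn_s+yb_s+f(s)$ from Theorem~\ref{thm:Tub}, compute the Jacobian on the central circle where the columns $n_s,b_s,f'(s)$ are orthogonal, bound the operator norms of $J_g$ and $J_g^{-1}$ there, and then invoke continuity and compactness to push the bound to a thin enough tube. Your phrasing via singular values is slightly more streamlined than the paper's explicit calculations, but the argument is the same in substance.
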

\begin{proof}
  Let $g$ be as in the proof of Theorem~\ref{thm:Tub}. 
  Denote $n_s=(n^1_s,n^2_s,n^3_s)$ and
  $b_s=(b^1_s,b^2_s,b^3_s)$ for all~$s$ and $f(s)=(f_1(s),f_2(s),f_3(s))$.
  The Jacobian of $g$ at $(0,0,s)$ is then
  $$J(s)=\left(
  \begin{matrix}
    n_s^1 & b_s^1 & f_1'(s)\\
    n_s^2 & b_s^2 & f_2'(s)\\
    n_s^3 & b_s^3 & f_3'(s)
  \end{matrix}
  \right).
  $$
  Let $\bar x\in \R^3$ be any unit vector, $|\bar x|=1$, $\bar x=(x_1,x_2,x_3)$.
  Then $$J(s)\bar x=x_1 n_s + x_2 b_s + x_3 f'(s).$$
  Since $L\ge 1$, $|f'(s)|\ge 1$, $|n_s|=|b_s|=1$ and all these vectors are orthogonal to
  each other, we have:
  \begin{eqnarray*}
    |J(s)\bar x|&=&|x_1 n_s + x_2 b_s + x_3 f'(s)|\\
    &=&\sqrt{x_1^2 + x_2^2 + x_3^2 |f'(s)|^2}\\
    &\le &\sqrt{x_1^2 |f'(s)|^2 + x_2^2 |f'(s)|^2 + x_3^2 |f'(s)|^2}\\
    &= & |f'(s)|\sqrt{x_1^2 + x_2^2 + x_3^2}\\
    &= & |f'(s)|.
  \end{eqnarray*}
  This means that the vector $\bar x=(0,0,1)$ maximizes the norm of
  $J(s)\bar x$ whence it is $ |f'(s)|$ and so this means that the operator norm
  of $J(s)$ is $|f'(s)|$. By the continuity of the Jacobian and compactness,
  we can find $r$ such that the Jacobian of $g$ at $(x,y,s)$ for all $(x,y)\in D^2_r$
  has operator norm at most $|f'(s)|+\e/2$. Then restricted to $D^2_r\times S^1$, $g$
  will be locally $(|f'(s)|+\e)$-Lipschitz. Consider then the inverse $J(s)^{-1}$.
  Let $\bar x$ be arbitrary unit vector in the coordinate system
  $(n_s,b_s,f'(s)/|f'(s)|)$ and consider
  \begin{eqnarray*}
    |J(s)^{-1}(x_1n_s+x_2b_s+x_3f'(s)/|f'(s)|)|&=&|J(s)^{-1}J(s)(x_1,x_2,x_3/|f'(s)|)|\\
    &=&|(x_1,x_2,x_3/|f'(s)|)|\\
    &\le&|(x_1,x_2,x_3)|\\
    &=&1
  \end{eqnarray*}
  where the last inequality follows from the assumption that $|f'(s)|\ge 1$.
  So the operator norm of $J(s)^{-1}$ is $1$.
  By moving to yet smaller $r$, one can ensure that the operator norm of $g^{-1}$ is everywhere
  at most $1+\e/2\le L+\e/2$. Thus, for this $r$, both $g$ and $g^{-1}$
  are locally $(L+\e)$-Lipschitz, and so $g$ is locally $(L+\e)$-bilipschitz.
\end{proof}

For a sequence of functions $(f_n)$ such that $\Im f_n\subset \dom f_{n+1}$
for all $n$, denote by $\comp_{k=i}^jf_k=f_i\circ\cdots \circ f_j$ 
their composition. The order matters in composition, so if $j<i$, this notation
simply means that the functions are taken in reverse order, for example one
could write
$$\left(\comp_{i=2}^5 f_i\right)^{-1}=\comp_{i=5}^2 f^{-1}_i.$$
Otherwise we often omit the circle from the composition notation, writing
just $f\circ g=fg$. We also sometimes omit the brackets from the function
notation, i.e. $f(x)=fx$.

Let $(K_n)_{n\in\N}$ enumerate all (tame) prime knot 
types (trivial knot not included).
The content of the following proposition is partially illustrated in Figure~\ref{fig:Embedding}.

\begin{Prop}\label{prop:SequencesOfEmbeddings}
  There exist sequences $(\e_n)$, $(\l_n)$, $(L_n)$, $(g_n)$ and $(f_n)$ such that
  for all~$n$, 
  \begin{enumerate}[(a)]
  \item $g_{n}\colon T_{\e_{n+1}}\to T_{\e_n}$, $f_n\colon S^1\to T_{\e_{n}}$
    and $g_n(\bar 0,s)=f_n(s)$ for all $s\in S^1$,
  \item $\e_0=1$, $L_n<2^{2^{-n}}$ and $4\l_{n+1}<2\e_{n+1}<\l_n<\e_n\le 2^{-n}$, \label{sequences(g)}
  \item $20\e_{n+1}$ is tubular for $f_n$, \label{sequences(h)}
  \item $\Im g_n=T(f_n,\e_{n+1})\subset T_{\l_n}$ and $g_n$ witnesses that $T(f_n,\e_{n+1})$ 
    is a tubular neighborhood of $f_n$,  \label{sequences(a)}
  \item $f_n(s)=s$ for $s\notin B_{S^1}(0,\l_n)$ and $f_n(s)\in B_{T_{\e_n}}(\bar 0,\l_n)$
    for $s\in B_{S^1}(0,\l_n)$, \label{sequences(b)}
  \item $(B_{T_{\e_n}}(\bar 0,\l_n),f_n\cap B_{T_{\e_n}}(\bar 0,\l_n))$  is a proper arc of knot type~$K_n$, \label{sequences(c)}
  \item $f_n$ is locally $1$-colipschitz, \label{sequences(d)}
  \item $g_n$ is locally $L_n$-bilipschitz, \label{sequences(e)}
  \item $g_n$ is $\e^2_{n}/\e_{n+1}$-colipschitz. \label{sequences(f)}
  \end{enumerate}
\end{Prop}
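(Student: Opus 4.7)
The plan is induction on $n$, starting with $\e_0=1$. At stage $n$, given $\e_n$, we first choose $\l_n<\e_n/2$, then build $f_n$ and $L_n$, then produce $g_n$, and finally pick $\e_{n+1}$ small enough that all the numerical constraints hold.

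Construct the smooth embedding $f_n\colon S^1\to T_{\e_n}$ by setting $f_n(s)=(\bar 0,s)$ for $s\notin B_{S^1}(0,\l_n)$, and on $B_{S^1}(0,\l_n)$ letting $f_n$ trace a smooth proper arc in $B_{T_{\e_n}}(\bar 0,\l_n)$ of knot type $K_n$ smoothly matched to the identity at the endpoints. Such a representative exists by standard differential topology: every prime knot type admits a smooth representative inside any given ball, of arbitrarily short length provided the curvature is allowed to be large. Choose $L_n<2^{2^{-n}}$ and reparametrize the detour so that $1\le|f_n'(s)|\le L_n$ pointwise; this gives (g) and makes $f_n$ locally $L_n$-bilipschitz.

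Theorem~\ref{thm:Tub} supplies a tubular radius $r>0$ for $f_n$, and Lemma~\ref{lemma:TubularBilipschitz} (with its $\e$ so small that the resulting constant stays below $2^{2^{-n}}$) produces $r_1\le r$ and a locally $L_n$-bilipschitz homeomorphism $g\colon D^2_{r_1}\times S^1\to T(f_n,r_1)$ realizing this tubular neighborhood. Choose $\e_{n+1}>0$ satisfying $20\e_{n+1}\le r_1$, $2\e_{n+1}<\l_n$, $\e_{n+1}\le 2^{-(n+1)}$, and $\Im g_n\subset T_{\l_n}$ where $g_n:=g\rest(D^2_{\e_{n+1}}\times S^1)$, together with one further smallness condition specified below. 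Properties (a)--(h) of the proposition are then direct from the construction: (a), (c), (d), (f) from the choice of $g_n$ and $f_n$; (b) from the numerical constraints; (e) from Lemma~\ref{lemma:TubularBilipschitz}; (g) from the parametrization; and (h) from $20\e_{n+1}\le r_1\le r$.

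For condition (i), Proposition~\ref{prop:BilipPathMetric} implies $L_n$-bilipschitzness of $g_n$ in the intrinsic path metrics, which is much stronger than what (i) demands; the ambient metric on $T(f_n,\e_{n+1})\subset T_{\e_n}$ can undercut the intrinsic one only when the two endpoints lie over well-separated pieces of $\Im f_n$. Since $f_n$ is an embedding, $(s,s')\mapsto d_{T_{\e_n}}(f_n(s),f_n(s'))$ is bounded below by a positive constant $\delta_n$ on $\{(s,s'):d_{S^1}(s,s')\ge\e_n\}$. Taking also $\pi\e_{n+1}\le\delta_n\e_n^2/4$ in the choice of $\e_{n+1}$, a case analysis on whether the $s$-coordinates of $x,y$ are close (use the intrinsic bound) or far (ambient distance is at least $\delta_n/2$, which dominates $\e_{n+1}\,d(x,y)/\e_n^2$) yields (i). The main obstacle is realizing a possibly very complex prime knot $K_n$ inside the tiny ball $B_{T_{\e_n}}(\bar 0,\l_n)$ while keeping $|f_n'|$ close to $1$; this is resolved by allowing the smooth representative to have high curvature, a cost absorbed entirely by the (arbitrarily small) choice of $\e_{n+1}$.
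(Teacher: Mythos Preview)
Your inductive scheme matches the paper's: pick $\l_n$, build $f_n$ as the identity with a tiny knotted detour so that $|f_n'|$ stays near $1$, invoke Lemma~\ref{lemma:TubularBilipschitz} to get $g_n$, and then shrink to $\e_{n+1}$. Two points need attention.

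A minor one: you fix $L_n<2^{2^{-n}}$, arrange $f_n$ to be locally $L_n$-bilipschitz, and then Lemma~\ref{lemma:TubularBilipschitz} only yields $g_n$ locally $(L_n+\e)$-bilipschitz. Condition~(h) demands $L_n$, not $L_n+\e$. The paper avoids this by first making $f_n$ locally $L_n'$-bilipschitz for some $L_n'<L_n$, so that $L_n'+\e\le L_n$; you should either do the same or relabel $L_n$ after applying the lemma.

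The real gap is in your treatment of~(i). In the ``close'' case $d_{S^1}(s,s')<\e_n$ you write ``use the intrinsic bound'', meaning the path-metric $L_n$-colipschitz estimate from Proposition~\ref{prop:BilipPathMetric}. But that estimate controls the \emph{path} distance $d^{p}(g_n(x),g_n(y))$, not the ambient distance in $T_{\e_n}$, and your own construction forces $f_n$ to have arbitrarily high curvature inside $B(\bar 0,\l_n)$. Over such a region the tube $T(f_n,\e_{n+1})$ is tightly coiled, and the straight segment in $T_{\e_n}$ from $g_n(x)$ to $g_n(y)$ will typically leave the tube even when $s,s'$ are close, so the ambient distance can be much smaller than the path distance. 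The threshold $\e_n$ has nothing to do with the curvature scale of $f_n$, so the claimed coincidence of the two metrics in this range is unjustified.

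The paper's argument for~(i) is both simpler and correct: since $g_n'$ (on the larger torus $T_{\e_{n+1}'}$) is a continuous injection on a compact set and is locally $L_n$-colipschitz, a direct compactness argument (the function $d(g_n'(x),g_n'(y))/d(x,y)$ attains a positive infimum away from the diagonal, and near the diagonal the local bound applies) gives some global colipschitz constant $\e^{-1}$. One then simply takes $\e_{n+1}\le\e_{n+1}'$ with $\e_{n+1}/\e_n^2<\e$, and~(i) is immediate for the restriction $g_n=g_n'\rest T_{\e_{n+1}}$. Your case split can be rescued by replacing the threshold $\e_n$ with a Lebesgue number for the local-bilipschitz cover, but at that point you have essentially reproduced the paper's compactness argument.
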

\begin{proof}
  Let $\e_0=1$, $\l_0=1/2$, $L_0=3/2$ and $f_0$ is a smooth map
  $S^1\to T_{\e_0}$ satisfying~\eqref{sequences(b)}
  and~\eqref{sequences(c)}. By making the knot sufficiently small, we
  can make sure that the length of the curve is arbitrarily close to
  the length of $S^1$, so that $f_0$ is locally $L_0'$-bilipschitz for some
  $L_0'<L_0$ and
  we can also make sure that the gradient is always at least $1$,
  i.e. $f_0$ is $1$-colipschitz, so \eqref{sequences(d)} is satisfied.
  As an induction hypothesis assume that $\e_m,\l_m,L_m,f_m$ are
  defined for all $0\le m\le n$ and $g_m$ is defined for all 
  $0\le m<n$. If $m<n$, then all conditions are satisfied. For $\e_n,\l_n,L_n,f_n$ 
  only \eqref{sequences(b)}, \eqref{sequences(c)} and
  \eqref{sequences(d)} are satisfied and additionally $f_n$ is locally $L_n'$-bilipschitz for some
  $L_n'< L_n$. 
  Let $\e_{n+1}'$ be so small that $20\e_{n+1}'$ is tubular for
  $f_n$, that $\bar B(f_n,\e_{n+1}')\subset T_{\l_n}$, that a tubular neighborhood of thickness $\e_{n+1}'$ can
  be realized by a locally $L_n$-bilipschitz function
  $g'_{n}\colon T_{\e_{n+1}'}\to T_{\e_n}$ and so that $2\e_{n+1}'<\min\{\l_n,2^{-n-2}\}$.  These are
  possible respectively by Theorem~\ref{thm:Tub}, by
  compactness, by Lemma~\ref{lemma:TubularBilipschitz} and the rest trivially. So let
  $g'_{n}$ be this function. Now $g_n'$ satisfies \eqref{sequences(a)} 
  and~\eqref{sequences(e)}, so everything except \eqref{sequences(f)}
  is satisfied. 
  By compactness and the fact that it is locally $L_n$-colipschitz,
  there exists $\e$ so that $g_n'$ is 
  $\e^{-1}$-colipschitz. Let $\e_{n+1}\le \e_{n+1}'$ be so small that $\e_{n+1}/\e_{n}^2<\e$.
  Then let $g_n=(g_n'\rest T_{\e_{n+1}})$. Now $f_n=g_n\rest S^1$ and all
  conditions are satisfied. Now define $\l_{n+1}<\e_{n+1}/2$, $L_{n+1}$ anything between
  $1$ and $2^{2^{-n-1}}$ and $f_{n+1}$ to be from $S^1$ to $T_{\e_{n+1}}$ satisfying
  \eqref{sequences(b)}, \eqref{sequences(c)} and \eqref{sequences(d)} and which is locally
  $L_{n+1}'$-colipschitz for some $L_{n+1}'< L_{n+1}$.
\end{proof}

\begin{figure}
  \centering
  \includegraphics[width=0.85\textwidth]{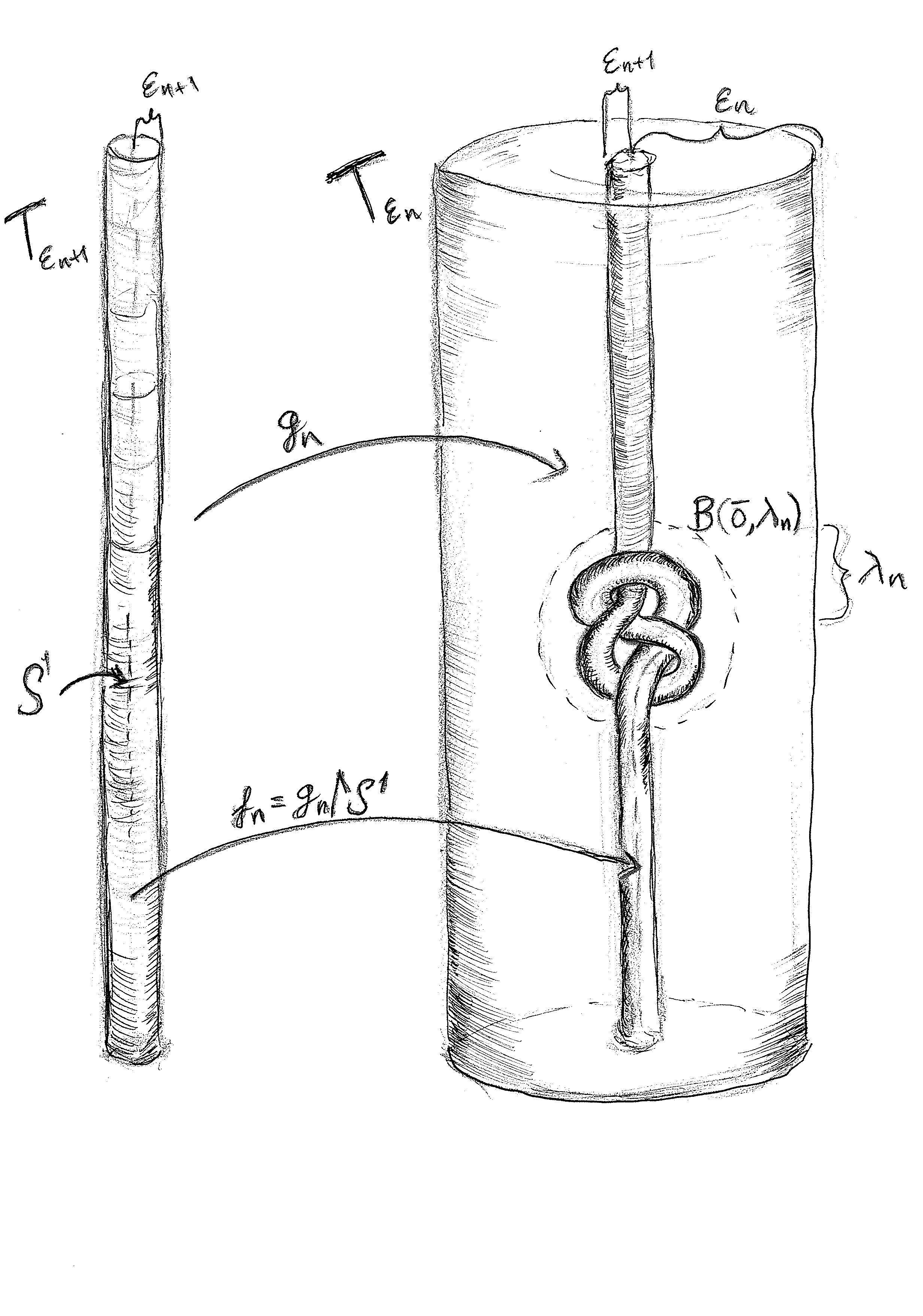}
  \caption{An illustration of Proposition~\ref{prop:SequencesOfEmbeddings}.}
  \label{fig:Embedding}
\end{figure}

Let $\theta^{s}\colon T_1\to T_1$ be the rotation by $s$: $\theta^{s}(b,t)=(b,t+s)$.
Note that $\theta^{s}\theta^{t}=\theta^{s+t}$.
Define $H_1^{s}$ to be the homeomorphism of $T_1$ defined by:
$H_1^s(b,t)=\theta^{r(b,s)}(b,t)$ where $r(b,s)=(1-2\max\{0,|b|-1/2\})s$
and let $H^s_\e=\b_{\e} H^s\b^{-1}_{\e}$ where $\b_\e\colon T_1\to T_{\e}$
is the homeomorphism $\b_\e(b,s)=(\e b,s)$.
It is clear that 
\begin{equation}
  \label{eq:IsTheta}
  H_\e^{s}\rest T_{\e/2}=\theta^{s} 
\end{equation}
and for all $x\in T_{\e/2}$ we have
\begin{equation}
  d(x,H_{\e}^s(x))=d(x,\theta^s(x))=|s|\label{eq:isometry}
\end{equation}
and in general $d(x,H_{\e}^{s}(x))\le |s|$.  
Also for all $x,y\in T_{\e}$ we have
\begin{equation}
  \label{eq:HomeoLip}
  d(H_\e^s(x),H_\e^{s}(y))\le d(x,y)+|s|.
\end{equation}

Let $\hat L_n=\prod_{i=0}^{n}L_n$. By \eqref{sequences(g)}, 
\begin{equation}
  \label{eq:hatLn}
  \hat L_n< 5.
\end{equation}

Let $\bar x$ be a sequence in $(S^1)^{\N}$. Define
$\hat g^{\bar x}_n\colon T_{\e_{n+1}}\to T_1$ by
\begin{equation}
  \hat g^{\bar x}_n=\comp_{k=0}^{n} \theta^{x_k} g_k\theta^{-x_k}.\label{eq:rotationform}
\end{equation}
It is easy to calculate that $\hat g^{\bar x}_n$ is locally $\hat L_n$-bilipschitz,
because $\theta^s$ is an isometry and by \eqref{sequences(e)}. By \eqref{eq:hatLn}
it is locally $5$-bilipschitz. In particular we have:

\begin{Prop}\label{prop:IsLip}
  $\hat g^{\bar x}_n$ is $5$-Lipschitz. 
\end{Prop}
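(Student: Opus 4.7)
The plan is to upgrade the observation already made in the text, namely that $\hat g^{\bar x}_n$ is locally $\hat L_n$-bilipschitz with $\hat L_n<5$, into a global $5$-Lipschitz bound.

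First I would verify the local estimate carefully. Each rotation $\theta^{x_k}\colon T_{\e_k}\to T_{\e_k}$ is an isometry by \eqref{eq:isometry} (or more directly because rotations preserve the product metric on $T_{\e_k}$). By part~\eqref{sequences(e)} of Proposition~\ref{prop:SequencesOfEmbeddings}, each $g_k\colon T_{\e_{k+1}}\to T_{\e_k}$ is locally $L_k$-bilipschitz, and sandwiching by isometries preserves this, so $\theta^{x_k}g_k\theta^{-x_k}$ is locally $L_k$-bilipschitz. Composing $n+1$ such maps, the chain rule for local Lipschitz constants yields that $\hat g^{\bar x}_n$ is locally $\hat L_n$-bilipschitz, hence in particular locally $\hat L_n$-Lipschitz. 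By \eqref{eq:hatLn}, $\hat L_n<5$, so $\hat g^{\bar x}_n$ is locally $5$-Lipschitz.

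Next I would invoke Proposition~\ref{prop:BilipPathMetric}, which promotes the local Lipschitz bound to a global bound with respect to the path metrics on the source and target tori. The source is $T_{\e_{n+1}}$ and the target is $T_1$, both of which are length spaces. Since the excerpt explicitly records (just before the definition of tubular neighborhoods) that the product metric on $T_r$ coincides with the path metric $d^p_{T_r}$, the global inequality with respect to path metrics is the same as the global inequality with respect to the ambient product metrics. Thus $\hat g^{\bar x}_n$ is $5$-Lipschitz as a map between $T_{\e_{n+1}}$ and $T_1$ with their product metrics, which is exactly the statement of the proposition.

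No step here is a real obstacle; the only subtlety is making sure that one really is entitled to pass from ``locally Lipschitz'' to ``Lipschitz'' without a factor getting worse, which is the content of Proposition~\ref{prop:BilipPathMetric}, together with the crucial (and already noted) fact that the relevant metrics on solid tori are convex in the sense that they agree with their own path metrics.
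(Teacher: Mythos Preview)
Your proposal is correct and follows exactly the route the paper takes: establish that $\hat g^{\bar x}_n$ is locally $\hat L_n$-Lipschitz (already done in the text just before the proposition), then upgrade via Proposition~\ref{prop:BilipPathMetric} using that the standard metric on the domain $T_{\e_{n+1}}$ coincides with its path metric. The paper's proof is precisely this, stated in one sentence; your additional remark about convexity of the target $T_1$ is not needed (and strictly speaking Proposition~\ref{prop:BilipPathMetric} asks for a homeomorphism, so one should take $Y=\Im\hat g_n$ and use $d_{T_1}\le d^{p(n)}$ rather than $Y=T_1$), but it does no harm.
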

\begin{proof}
  It follows from the fact that the standard metric on
  $\dom g^{\bar x}_n=T_{\e_{n+1}}$ coincides with its path metric
  and Proposition~\ref{prop:BilipPathMetric}.
\end{proof}

Equip each $\Im(\hat g_n)$ with
the path metric $d^{p(n)}$. From Proposition~\ref{prop:BilipPathMetric} we obtain:

\begin{Prop}\label{prop:Hbilipschitz} 
  $\hat g_n$ is $5$-bilipschitz as a function from $T_{\e_{n+1}}$ to
  $(\Im \hat g_n,d^{p(n)})$. \qed
\end{Prop}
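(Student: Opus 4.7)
The plan is to reduce this to Proposition~\ref{prop:BilipPathMetric} applied in both directions (Lipschitz and colipschitz), since the Lipschitz half is already essentially Proposition~\ref{prop:IsLip} and the key point is the colipschitz bound.

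First I would observe that $\hat g^{\bar x}_n$ is locally $\hat L_n$-bilipschitz as a map from $T_{\e_{n+1}}$ into $T_1$: the rotations $\theta^{s}$ are isometries, and by \eqref{sequences(e)} each $g_k$ is locally $L_k$-bilipschitz, so the composition \eqref{eq:rotationform} is locally bilipschitz with constant $\prod_{k=0}^n L_k = \hat L_n$. By \eqref{eq:hatLn} we have $\hat L_n < 5$, so $\hat g_n$ is locally $5$-bilipschitz.

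Next I would invoke Proposition~\ref{prop:BilipPathMetric}, which converts local bilipschitz bounds into global bilipschitz bounds between path metrics. Viewing $\hat g_n$ as a homeomorphism onto $\Im \hat g_n \subset T_1$, and equipping $\Im \hat g_n$ with the path metric $d^{p(n)}$, the locally $5$-Lipschitz property yields
\[
d^{p(n)}(\hat g_n(x), \hat g_n(y)) \le 5\, d^p_{T_{\e_{n+1}}}(x,y),
\]
and the locally $5$-colipschitz property (applied to the inverse) yields the reverse inequality $d^p_{T_{\e_{n+1}}}(x,y) \le 5\, d^{p(n)}(\hat g_n(x),\hat g_n(y))$.

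Finally, I would appeal to the remark made just before Proposition~\ref{prop:IsLip}: from our parametrization of $S^1$, the standard product metric on $T_{\e_{n+1}}$ coincides with its path metric $d^p_{T_{\e_{n+1}}}$. Substituting this identification into both inequalities above gives that $\hat g_n$ is $5$-bilipschitz as a map from $T_{\e_{n+1}}$ (with its standard metric) to $(\Im \hat g_n, d^{p(n)})$, which is exactly the conclusion. There is no real obstacle here—everything reduces cleanly to Proposition~\ref{prop:BilipPathMetric} plus the convexity of the torus metric.
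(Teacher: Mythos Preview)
Your proposal is correct and follows exactly the paper's approach: the paper has already established just before Proposition~\ref{prop:IsLip} that $\hat g_n$ is locally $\hat L_n$-bilipschitz (hence locally $5$-bilipschitz by~\eqref{eq:hatLn}), and then states Proposition~\ref{prop:Hbilipschitz} as an immediate consequence of Proposition~\ref{prop:BilipPathMetric} together with the coincidence of the standard and path metrics on $T_{\e_{n+1}}$. You have simply spelled out these steps in full.
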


Let $\hat f^{\bar x}_n=\hat g^{\bar x}_n\rest S^1$ and define
\begin{equation}\label{eq:defknot}
  K(\bar x)=\Cap_{n\in\N} \Im \hat g^{\bar x}_n
\end{equation}
A slight modification of
this is going to be 
the knot associated with $\bar x$ in the reduction. 
First let us show that $K(\bar x)$ is indeed a knot and is in fact the image
of $f^{\bar x}$ which is the limit of the sequence $(\hat f^{\bar x}_n)$.

\begin{Prop}\label{prop:DefOfKnot}
  $K(\bar x)$ is a knot, i.e. an injective continuous image of~$S^1$.
\end{Prop}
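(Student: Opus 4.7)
The plan is to define $f^{\bar x}\colon S^1\to T_1$ as the uniform limit of $(\hat f^{\bar x}_n)_{n\in\N}$, verify that $\Im f^{\bar x}=K(\bar x)$, and then prove injectivity of $f^{\bar x}$. Writing
$$\hat f^{\bar x}_{n+1}(s) \;=\; \hat g^{\bar x}_n\bigl(\theta^{x_{n+1}}(f_{n+1}(s-x_{n+1}))\bigr),\qquad \hat f^{\bar x}_n(s) \;=\; \hat g^{\bar x}_n(\bar 0, s),$$
the 5-Lipschitz bound of Proposition~\ref{prop:IsLip} reduces the step-size to bounding $d\bigl(\theta^{x_{n+1}}(f_{n+1}(s-x_{n+1})),(\bar 0,s)\bigr)$. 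By condition~\eqref{sequences(b)} of Proposition~\ref{prop:SequencesOfEmbeddings}, this distance vanishes whenever $s-x_{n+1}\notin B_{S^1}(0,\l_{n+1})$ and is otherwise at most $2\l_{n+1}$, because both points then lie within $\l_{n+1}$ of $(\bar 0, x_{n+1})$ (using that $\theta^{x_{n+1}}$ is an isometry). Thus $d(\hat f^{\bar x}_{n+1}(s),\hat f^{\bar x}_n(s))\le 10\l_{n+1}$, and since $\l_{n+1}<2^{-n-2}$ by~\eqref{sequences(g)} the series telescopes to give uniform Cauchyness, so $f^{\bar x}:=\lim\hat f^{\bar x}_n$ is continuous.

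For the image: the inclusion $\Im f^{\bar x}\subseteq K(\bar x)$ is immediate from closedness of each $\Im \hat g^{\bar x}_n$ and the nesting $\Im\hat g^{\bar x}_{n+1}\subseteq\Im\hat g^{\bar x}_n$. Conversely, given $y\in K(\bar x)$, set $z_n = (b_n,t_n) = (\hat g^{\bar x}_n)^{-1}(y)\in T_{\e_{n+1}}$; then $|b_n|\le\e_{n+1}\to 0$, so Proposition~\ref{prop:IsLip} gives $d(y,\hat f^{\bar x}_n(t_n))\le 5|b_n|\to 0$. A closer look at the recursion $z_{n+1} = \theta^{x_{n+1}}g_{n+1}^{-1}\theta^{-x_{n+1}}(z_n)$ shows that $z_{n+1}=z_n$ whenever $t_n-x_{n+1}\notin B_{S^1}(0,\l_{n+1})$, since the tubular map $g_{n+1}$ restricts to the identity on its unknotted portion; in the remaining case both $t_n$ and $t_{n+1}$ lie within $2\l_{n+1}$ of $x_{n+1}$, so $|t_{n+1}-t_n|\le 3\l_{n+1}$. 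Summability of $(\l_n)$ makes $(t_n)$ Cauchy with some limit $s^\ast$, and uniform convergence yields $f^{\bar x}(s^\ast)=y$.

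The main obstacle is \emph{injectivity}, since uniform limits of injective continuous maps need not be injective. The structural fact I rely on is that each $\hat g^{\bar x}_n\colon T_{\e_{n+1}}\to T_1$ is an embedding of winding number $1$: the curve $f_n$ is identity outside a small ball by~\eqref{sequences(b)}, hence once-wound, so $g_n$ wraps $T_{\e_{n+1}}$ once around the core of $T_{\e_n}$, and this is preserved by composition. Consequently $K(\bar x)$ is an intersection of once-wound nested solid tori, a topological circle rather than a non-trivial solenoid. To turn this into a proof of uniqueness of the preimage, suppose $f^{\bar x}(s)=y=f^{\bar x}(s^\ast)$. The meridional discs $\hat g^{\bar x}_n(D^2_{\e_{n+1}}\times\{s\})$ are embedded in $T_1$ and transverse to $\hat f^{\bar x}_n$ at $\hat f^{\bar x}_n(s)$; combined with the path-metric bilipschitz control of Proposition~\ref{prop:Hbilipschitz} and the Euclidean convergence $\hat f^{\bar x}_n(s)\to y$, these discs force $d_{T_{\e_{n+1}}}((\bar 0,s),z_n)\to 0$, so $s=\lim t_n=s^\ast$. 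The delicate point is ruling out a ``long-detour'' scenario in which $(\bar 0,s)$ and $z_n$ remain Euclidean-close but far in the path metric of $\Im\hat g^{\bar x}_n$; this is exactly where the winding-number-$1$ structure of the embeddings $g_n$ is essential, preventing the tube from folding back on itself at the relevant scale.
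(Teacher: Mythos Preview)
Your treatment of uniform convergence and of the two image inclusions is essentially the paper's argument, only organized a bit differently (the paper disposes of $K(\bar x)\subset\Im f^{\bar x}$ in one line by observing that every point of $\Im\hat g^{\bar x}_n$ is within $\e_n$ of $\Im f^{\bar x}$, whereas you construct an explicit inverse parameter $s^\ast=\lim t_n$). One minor point: in the unknotted region you claim $z_{n+1}=z_n$; strictly speaking the tubular map $g_{n+1}$ may rotate the disc fibres, so only the $S^1$-coordinate is preserved. That is all you need, so this is harmless.

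The injectivity argument, however, has a real gap. You correctly identify the obstruction: from $d\bigl(\hat g^{\bar x}_n(\bar 0,s),\hat g^{\bar x}_n(z_n)\bigr)\to 0$ in the Euclidean metric you want $d\bigl((\bar 0,s),z_n\bigr)\to 0$, but Proposition~\ref{prop:Hbilipschitz} only controls the \emph{path} metric $d^{p(n)}$ on the image, and the ratio $d^{p(n)}/d$ may blow up with~$n$. Invoking ``winding number~$1$'' names the right topological picture but is not a proof; one still has to rule out, quantitatively, that the tube at stage~$n$ passes close to itself at the relevant scale. You do not do this, and it is not clear how to do it from the ingredients you cite.

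The paper solves this problem by a completely different, purely metric route: it uses condition~\eqref{sequences(f)} of Proposition~\ref{prop:SequencesOfEmbeddings}, which says each $g_n$ is globally $\e_n^2/\e_{n+1}$-colipschitz in the \emph{Euclidean} metric. Multiplying these constants gives a global colipschitz bound for $\hat f^{\bar x}_n$, namely $d(\hat f^{\bar x}_n(s),\hat f^{\bar x}_n(t))\ge(\e_n/\e_{n-1})\,d(s,t)$. Combined with $d(f^{\bar x},\hat f^{\bar x}_n)\le 10\e_{n+1}$ and a suitable choice of~$n$ depending on $d(s,t)$, this yields $d(f^{\bar x}(s),f^{\bar x}(t))>0$ directly. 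Condition~\eqref{sequences(f)} was put into Proposition~\ref{prop:SequencesOfEmbeddings} precisely for this step, and you should use it rather than the path-metric bilipschitz bound.
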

\begin{proof}
  For the time of this proof let us drop the ``$\bar x$'' from the upper case:
  $\hat g_n^{\bar x}=\hat g_n$, $\hat f_n^{\bar x}=\hat f_n$.
  \begin{eqnarray}
    d(\hat f_n(s),\hat f_{n+1}(s))&=& d(\hat g_n(s),\hat g_n \theta^{x_{n+1}}g_{n+1}\theta^{-x_{n+1}}(s))\nonumber\\
    &\le& 5 d(s,\theta^{x_{n+1}}f_{n+1}\theta^{-x_{n+1}}(s)),\label{eqnn1}\\
    &=& 5 d(\theta^{-x_{n+1}}s,f_{n+1}\theta^{-x_{n+1}}(s)),\label{eqnn2}\\
    &=& 5 d(t,f_{n+1}(t)),\nonumber\\
    &\le& 10\l_{n+1}\label{eqnn3}    
  \end{eqnarray}
  where $t=\theta^{-x_{n+1}}(s)$. \eqref{eqnn1} follows from
  Proposition~\ref{prop:IsLip},
  \eqref{eqnn2} follows from the fact that $\theta^s$ is an
  isometry and \eqref{eqnn3} follows from \eqref{sequences(b)} and \eqref{sequences(c)}. 
  From \eqref{sequences(g)} it follows in particular that $\sum_{k=n}^{\infty}\l_k\le \e_n$, so
  from the arbitrariness of $n$ above, the sequence $(\hat f_n)$ is
  Cauchy. The limit $f$ is contained in $\Im \hat g_n$ for every $n$,
  so $\Im f\subset K(\bar x)$ on one hand, and if a point is in
  $\Im \hat g_n$, then its distance from $\Im f$ is at most $\e_n$
  which tends to zero, so $K(\bar x)\subset \Im f$ on the other.
  
  It follows from \eqref{sequences(f)} that the limit, $f$, is injective:
  Let $s,t\in S^1$ and let $\e=d(s,t)$.  Let $n$ be so large that
  $20\e_{n-1}<\e$. From the above we know that $d(\hat f_n,f)\le 10\sum_{m>n}\l_n$ 
  which is at most $10\e_{n+1}$ by \eqref{sequences(g)}.
  Now
  \begin{eqnarray*}
    d(\hat f_{n}(s),\hat f_n(t))&\le& d(\hat f_n(s),f(s))+d(f(s),f(t))+d(\hat f_n(t),f(t))\\
    &\le& 20\e_{n+1} + d(f(s),f(t)).
  \end{eqnarray*}
  So we have
  $$d(f(s),f(t)) \ge d(\hat f_{n}(s),\hat f_n(t)) -20\e_{n+1}.$$
  But by \eqref{sequences(f)}, $\hat f_n$ is
  $M^{-1}$-colipschitz, where
  \begin{equation}
    M=\prod_{m=1}^{n}\frac{\e_m}{\e^2_{m-1}}\ge \e_n/\e_{n-1}.\label{eq:colpischitz}
  \end{equation}
  This is obtained by multiplying the colipschitz constants 
  of the factors of $\hat f_n$. Thus
  \begin{align*}
    d(f(s),f(t)) &\ge d(\hat f_{n}(s),\hat f_n(t)) -20\e_{n+1}\\
    &\ge \frac{\e_n}{\e_{n-1}}d(s,t)-20\e_{n+1} \\
    &= \frac{\e_n}{\e_{n-1}}\e-20\e_{n+1} \\
    &\ge \frac{\e_n}{\e_{n-1}}\cdot 20 \e_{n-1}-20\e_{n+1} \\
    &\ge 20(\e_n-\e_{n+1}) \\
    &> 0.      \qedhere
  \end{align*}
\end{proof}

Let $\bar x'$ be another sequence. Let us again  simplify the notation
$\hat g^{\bar x}_n=\hat g_n$ and $\hat g^{\bar x'}_{n}=\hat g'_n$.

For each $n\in\N$ let $y_{n+1}=(x'_{n+1}-x_{n+1})-(x'_{n}-x_{n})$.
Define homeomorphisms of $T_1$ as follows:
Let $H_0=H_{\e_0}^{x_0'-x_0}$ and 
\begin{equation}
  \label{eq:DefOfH_n}
  H_{n+1}(x)=
  \begin{cases}
    H_n\hat g_n H^{y_{n+1}}_{\e_{n+1}} \hat g_n^{-1}(x) &\text{ if } x\in\Im(\hat g_n) \\
    H_n & \text{ otherwise.}
  \end{cases}
\end{equation}
This is a homeomorphism for all $n$ which can be seen by induction. Suppose $H_n$ is a homeomorphism,
then $H_{n+1}$ is defined in two parts. Suppose $x$ is on the boundary: $x\in \partial \Im(\hat g_n)$.
But then $\hat g_n^{-1}x\in \partial T_{\e_{n+1}}$ and on this set $H^{y_{n+1}}_{\e_{n+1}}$ is the identity.
Thus $H_n\hat g_n H^{y_{n+1}}_{\e_{n+1}} \hat g_n^{-1}(x)=H_n(x)$.

Now if $x$ happens to be in 
$\Im(\hat g_{n+1})$, i.e. $x=\hat g_{n+1}(y)$ for some $y$,
then
\begin{eqnarray*}
  \hat g_n^{-1}(x)&=&\hat g_n^{-1}\hat g_{n+1}(y)\\
  &=&\hat g_n^{-1}\hat g_{n}\theta^{x_{n+1}}g_{n+1}\theta^{-x_{n+1}}(y)\\
  &=&\theta^{x_{n+1}}g_{n+1}\theta^{-x_{n+1}}(y)
\end{eqnarray*}
and so $\hat g^{-1}_n(x)$ is in $T_{\e_{n+1}/2}$ by \eqref{sequences(a)} and \eqref{sequences(g)}. Therefore by
\eqref{eq:IsTheta}, if $x\in \Im\hat g_{n+1}$, then
we have
\begin{equation}
  H_{n+1}(x)=H_n\hat g_n  \theta^{y_{n+1}} \hat g_n^{-1}(x)\label{eq:RotFormOfH_n}
\end{equation}

\begin{Prop}\label{prop:Hggpr}
  $H_n\hat g_n=\hat g_n'\theta^{x_n'-x_n}$.
\end{Prop}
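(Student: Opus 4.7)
The plan is induction on $n$, exploiting \eqref{eq:RotFormOfH_n} and the fact that the rotations $\theta^s$ commute amongst themselves.

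For the base case $n = 0$, since $\Im \hat g_0 \subset T_{\l_0} = T_{\e_0/2}$ (recall $\l_0 = 1/2 = \e_0/2$ from the construction in Proposition~\ref{prop:SequencesOfEmbeddings}), \eqref{eq:IsTheta} gives $H_0 \rest \Im \hat g_0 = \theta^{x'_0 - x_0}$. Then a direct rewrite yields
$$H_0 \hat g_0 = \theta^{x'_0 - x_0} \theta^{x_0} g_0 \theta^{-x_0} = \theta^{x'_0} g_0 \theta^{-x'_0} \theta^{x'_0 - x_0} = \hat g'_0 \theta^{x'_0 - x_0}.$$

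For the inductive step, I would assume $H_n \hat g_n = \hat g'_n \theta^{x'_n - x_n}$ and fix $y \in T_{\e_{n+2}}$. The definition \eqref{eq:rotationform} gives $\hat g_n^{-1} \hat g_{n+1}(y) = \theta^{x_{n+1}} g_{n+1} \theta^{-x_{n+1}}(y)$, so \eqref{eq:RotFormOfH_n} applied to $\hat g_{n+1}(y) \in \Im \hat g_{n+1}$ produces
$$H_{n+1} \hat g_{n+1}(y) = H_n \hat g_n \, \theta^{y_{n+1}} \theta^{x_{n+1}} g_{n+1} \theta^{-x_{n+1}}(y).$$
Applying the inductive hypothesis and collecting the rotation factors via $\theta^s \theta^t = \theta^{s+t}$, this equals
$$\hat g'_n \, \theta^{(x'_n - x_n) + y_{n+1} + x_{n+1}} g_{n+1} \theta^{-x_{n+1}}(y) = \hat g'_n \, \theta^{x'_{n+1}} g_{n+1} \theta^{-x_{n+1}}(y),$$
where the last equality uses the definition $y_{n+1} = (x'_{n+1} - x_{n+1}) - (x'_n - x_n)$ to collapse the exponent. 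On the other hand, by \eqref{eq:rotationform} again,
$$\hat g'_{n+1} \theta^{x'_{n+1} - x_{n+1}}(y) = \hat g'_n \, \theta^{x'_{n+1}} g_{n+1} \theta^{-x'_{n+1}} \theta^{x'_{n+1} - x_{n+1}}(y) = \hat g'_n \, \theta^{x'_{n+1}} g_{n+1} \theta^{-x_{n+1}}(y),$$
which matches the previous expression, closing the induction.

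There is no real obstacle: the core geometric input, namely that $\hat g_n^{-1} \hat g_{n+1}(y) \in T_{\e_{n+1}/2}$ so that $H^{y_{n+1}}_{\e_{n+1}}$ acts there as the honest rotation $\theta^{y_{n+1}}$, has already been done in the paragraph preceding the proposition which justifies \eqref{eq:RotFormOfH_n}; everything remaining is routine book-keeping of rotation angles made possible by the abelianness of $\{\theta^s\}$.
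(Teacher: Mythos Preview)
Your proof is correct and follows essentially the same route as the paper: induction on $n$, with the base case handled via \eqref{eq:IsTheta} on $\Im \hat g_0 \subset T_{\e_0/2}$, and the inductive step via \eqref{eq:RotFormOfH_n} together with the rotation bookkeeping $(x'_n - x_n) + y_{n+1} + x_{n+1} = x'_{n+1}$. The only cosmetic difference is that the paper starts the inductive step from the raw definition \eqref{eq:DefOfH_n} and then passes to $\theta^{y_{n+1}}$, whereas you invoke \eqref{eq:RotFormOfH_n} directly.
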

\begin{proof}
  Induction: 
  \begin{eqnarray}
    H_0\hat g_0&=&H_{\e_0}^{x'_0-x_0}\theta^{x_0}g_0\theta^{-x_0}\nonumber \\
    &=&\theta^{x'_0-x_0}\theta^{x_0}g_0\theta^{-x_0}\label{steninaneq}\\
    &=&\theta^{x_0'}g_0\theta^{-x_0'}\theta^{x_0'-x_0}\nonumber\\
    &=&\hat g_0'\theta^{x_0'-x_0}.\nonumber
  \end{eqnarray}
  Step \eqref{steninaneq} follows from the fact that restricted to the range of $g_0$,
  $H_{\e_0}^{s}=\theta^s$.
  
  For $n+1$:
  \begin{align*}
    H_{n+1}\hat g_{n+1}
    &=H_{n}\hat g_n H^{y_{n+1}}_{\e_{n+1}}(\hat g_n)^{-1}\hat g_{n+1}\\
    &=H_{n}\hat g_n \theta^{y_{n+1}}(\hat g_n)^{-1}\hat g_{n+1}\\
    &=H_{n}\hat g_n \theta^{y_{n+1}}\theta^{x_{n+1}} g_{n+1}\theta^{-x_{n+1}}\\
    &=\hat g_n'\theta^{x_n'-x_n}\theta^{y_{n+1}}\theta^{x_{n+1}}g_{n+1}\theta^{-x_{n+1}}\\
    &=\hat g_n'\theta^{x_{n+1}'}g_{n+1}\theta^{-x'_{n+1}}\theta^{x'_{n+1}-x_{n+1}}\\
    &=\hat g_{n+1}'\theta^{x'_{n+1}-x_{n+1}}. \qedhere
  \end{align*}
\end{proof}
In particular, since $\theta^{s}$ is a bijection, $\Im(H_n \hat g_n)=\Im(\hat g_n')$, so
$(H_n\rest \Im g_n)$ is a homeomorphism from $\Im(g_n)$ to $\Im(g_n')$.
Since the domain of $\hat g_{n+1}$ is $T_{\e_{n+2}}$ and restricted to this domain
we have by \eqref{sequences(g)} and \eqref{eq:IsTheta}
$$\theta^{x'_{n+1}-x_{n+1}}\rest T_{\e_{n+2}}=H_{\e_{n+1}}^{x'_{n+1}-x_{n+1}}\rest T_{\e_{n+2}},$$
we can also write: 
\begin{equation}
  \label{eq:yetanotherform}
  H_n\hat g_n=\hat g_n' H_{\e_{n+1}}^{x_n'-x_n}.  
\end{equation}

Now, when $x\in \Im \hat g_{n+1}$
we can replace \eqref{eq:DefOfH_n} by the equivalent
$$H_{n+1}=\hat g'_{n}\theta^{x_{n+1}'-x_{n+1}}\hat g_{n}^{-1}(x),$$
because 
\begin{eqnarray}
  H_{n+1}&=&H_n\hat g_nH_{\e_{n+1}}^{y_{n+1}}\hat g_{n}^{-1}\nonumber\\
  &=&H_n\hat g_n\theta^{y_{n+1}}\hat g_{n}^{-1}\label{eqstep2}\\
  &=& \hat g'_n\theta^{x'_{n}-x_n}\theta^{y_{n+1}}\hat g_n^{-1}\nonumber\\
  &=& \hat g'_n\theta^{x'_{n+1}-x_{n+1}}\hat g_n^{-1}(x).\nonumber
\end{eqnarray}
Where \eqref{eqstep2} follows from \eqref{eq:RotFormOfH_n}.
On the other hand, a similar calculation using \eqref{eq:yetanotherform}
gives for $x\in \Im(\hat g_{n})$
$$H_{n+1}=\hat g'_{n}H_{\e_{n+1}}^{x_{n+1}'-x_{n+1}}\hat g_{n}^{-1}(x),$$
because
\begin{eqnarray}
  H_{n+1}&=&H_n\hat g_nH_{\e_{n+1}}^{y_{n+1}}\hat g_{n}^{-1}\nonumber\\
  &=& \hat g'_n H^{x'_{n}-x_n}_{\e_{n+1}} H_{\e_{n+1}}^{y_{n+1}}\hat g_n^{-1}\nonumber\\
  &=& \hat g'_n H_{\e_{n+1}}^{x'_{n+1}-x_{n+1}}\hat g_n^{-1}(x).\nonumber
\end{eqnarray}
So we have
\begin{eqnarray}
  H_{n+1}(x)=\hat g'_{n}\theta^{x_{n+1}'-x_{n+1}}\hat g_{n}^{-1}(x),&&\text{ if }x\in \Im (\hat g_{n+1})
  \label{eq:HatFormOfH1}\\
  H_{n+1}(x)=\hat g'_{n}H_{\e_{n+1}}^{x_{n+1}'-x_{n+1}}\hat g_{n}^{-1}(x),&&\text{ if }x\in \Im (\hat g_{n})
  \label{eq:HatFormOfH2}
\end{eqnarray}

\begin{Prop}\label{prop:Hbilipschitz2} 
  Suppose $x,z\in \Im\hat g_{n-1}$. Then 
  $$d(H_n(x),H_n(z))\le 25d^{p(n-1)}(x,z)+5|x'_n-x_n|.$$
\end{Prop}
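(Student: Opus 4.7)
The plan is to use the clean compositional form \eqref{eq:HatFormOfH2} of $H_n$ on $\Im \hat g_{n-1}$ and then bound each of the three factors in turn, tracking how distances transform under path versus standard metrics.

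Concretely, shifting indices in \eqref{eq:HatFormOfH2} from $n+1$ to $n$, for every $x\in\Im\hat g_{n-1}$ we have
\[
H_n(x)=\hat g'_{n-1}\,\circ\, H^{x'_n-x_n}_{\e_n}\,\circ\, \hat g_{n-1}^{-1}(x).
\]
Given $x,z\in \Im\hat g_{n-1}$, I would set $u=\hat g_{n-1}^{-1}(x)$ and $v=\hat g_{n-1}^{-1}(z)$, both in $T_{\e_n}$. The first step bounds $d_{T_{\e_n}}(u,v)$: by Proposition~\ref{prop:Hbilipschitz} the map $\hat g_{n-1}$ is $5$-bilipschitz from $T_{\e_n}$ to $(\Im\hat g_{n-1},d^{p(n-1)})$, so its inverse is $5$-Lipschitz and
\[
d_{T_{\e_n}}(u,v)\le 5\,d^{p(n-1)}(x,z).
\]

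The second step applies the twisting homeomorphism $H^{x'_n-x_n}_{\e_n}$. Using \eqref{eq:HomeoLip} with $s=x'_n-x_n$ and $\e=\e_n$, I get
\[
d\bigl(H^{x'_n-x_n}_{\e_n}(u),H^{x'_n-x_n}_{\e_n}(v)\bigr)\le d(u,v)+|x'_n-x_n|.
\]
The third step applies $\hat g'_{n-1}$, which by Proposition~\ref{prop:IsLip} is $5$-Lipschitz from $T_{\e_n}$ to $T_1$ in their standard metrics. Composing the three estimates,
\[
d\bigl(H_n(x),H_n(z)\bigr)\le 5\bigl(d(u,v)+|x'_n-x_n|\bigr)\le 5\bigl(5\,d^{p(n-1)}(x,z)+|x'_n-x_n|\bigr),
\]
which is exactly $25\,d^{p(n-1)}(x,z)+5|x'_n-x_n|$.

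The only subtle point, and the one I would be careful about, is the asymmetry of metrics in the two appearances of $\hat g_{n-1}$: the inverse $\hat g_{n-1}^{-1}$ must be used in its path-metric form (Proposition~\ref{prop:Hbilipschitz}), because the hypothesis $x,z\in\Im\hat g_{n-1}$ supplies $d^{p(n-1)}(x,z)$, while the outgoing $\hat g'_{n-1}$ is merely Lipschitz in the ambient $T_1$ metric (Proposition~\ref{prop:IsLip}). Both bounds give the same Lipschitz constant $5$, so no further work with path metrics on the range is needed and the three factors multiply to the claimed estimate.
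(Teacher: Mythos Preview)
Your proof is correct and essentially identical to the paper's: both use \eqref{eq:HatFormOfH2} (shifted in index) to write $H_n=\hat g'_{n-1}\,H^{x'_n-x_n}_{\e_n}\,\hat g_{n-1}^{-1}$ on $\Im\hat g_{n-1}$, then apply Proposition~\ref{prop:IsLip}, \eqref{eq:HomeoLip}, and Proposition~\ref{prop:Hbilipschitz} in sequence to get the factors $5$, $+|x'_n-x_n|$, and $5$ respectively. Your remark about the asymmetry of metrics is exactly the point the paper leaves implicit.
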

\begin{proof}
  Using Propositions \ref{prop:IsLip} and \ref{prop:Hbilipschitz}, and
  \eqref{eq:HatFormOfH2} and \eqref{eq:HomeoLip}
  we can calculate:
  \begin{align*}
    d(H_n(x),H_n(z))&=d(\hat g'_{n-1}H^{x_n'-x_n}_{\e_n}\hat g^{-1}_{n-1}(x),\hat g'_{n-1}H^{x_n'-x_n}_{\e_n}\hat g^{-1}_{n-1}(z))\\
    &\le 5 d(H^{x_n'-x_n}_{\e_n}\hat g^{-1}_{n-1}(x),H^{x_n'-x_n}_{\e_n}\hat g^{-1}_{n-1}(z))\\
    &\le 5 d(\hat g^{-1}_{n-1}(x),\hat g^{-1}_{n-1}(z))+5|x_n'-x_m|\\
    &\le 25 d^{p(n-1)}(x,z)+5|x_n'-x_m|. \qedhere
  \end{align*}
\end{proof}

\begin{Prop}\label{prop:isoLip}
  Suppose $Y\subset X$ are metric spaces, $\theta\colon X\to X$ is an isometric homeomorphism, and
  $g\colon Y\to X$ is an embedding. 
  Additionally assume that $\l=\sup_{y\in Y}d(y,gy)<\infty$ and $x\in \Im g$, $y\in Y$.
  Then
  $$d(x,\theta g y)\le d(g^{-1}x,\theta y)+2\l.$$
\end{Prop}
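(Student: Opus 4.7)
The plan is to apply the triangle inequality twice, using the hypothesis that $\lambda$ bounds the distance between any point of $Y$ and its image under $g$, and that $\theta$ preserves distances.

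First I would write $x = g(z)$ for some $z \in Y$, which is possible since $x \in \Im g$; then $g^{-1}x = z$, so the claim becomes
\[
d(g(z), \theta g(y)) \le d(z, \theta y) + 2\lambda.
\]
Next I would insert the intermediate points $z$ and $\theta y$ via the triangle inequality:
\[
d(g(z), \theta g(y)) \le d(g(z), z) + d(z, \theta y) + d(\theta y, \theta g(y)).
\]
The first summand is at most $\lambda$ by the hypothesis on $\lambda$. The third summand equals $d(y, g(y))$ because $\theta$ is an isometry, and this is again at most $\lambda$. Combining these bounds gives the stated inequality.

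There is no real obstacle; the statement is essentially a bookkeeping lemma that isolates a triangle-inequality step that will be reused in the subsequent estimates involving the maps $\hat g_n$ and the rotations $\theta^{s}$, where $\lambda$ will typically be controlled by one of the $\lambda_n$ from Proposition~\ref{prop:SequencesOfEmbeddings}.
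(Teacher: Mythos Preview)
Your proof is correct and is essentially the same as the paper's: both are two applications of the triangle inequality together with the fact that $\theta$ is an isometry and that $d(w,g(w))\le\lambda$ for $w\in Y$. The paper routes through $\theta^{-1}$ first and then inserts $y$ and $g^{-1}x$ as intermediate points, while you insert $z=g^{-1}x$ and $\theta y$ directly, but the content is identical.
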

\begin{proof}
  \begin{align*}
    d(x,\theta g y)&= d(\theta^{-1}x, gy) \\
    &\le  d(\theta^{-1}x,y)+d(y,gy) \\
    & =  d(x,\theta y) + d(y,gy) \\
    & \le  d(g^{-1}x,\theta y) +d(g^{-1}x,x)+ d(y,gy) \\
    &\le  d(g^{-1}x,\theta y)+2\l. \qedhere
  \end{align*}
\end{proof}

\begin{Prop}\label{supertechnicalclaim}
  Assume $k<n$ and suppose $\theta_i$ and $\theta_i'$ are 
  isometries of $T_1$ of the form $\theta^{s}$ for $i\in \{k,k+1,\dots, n\}$.
  and $t\in \Im g_n$. Then
  $$d\big(t,\comp_{i=k}^{n}g_i\theta_i\circ \comp_{i=n}^{k}\theta'_{i}g_{i}^{-1}(t)\big)\le d\big(t,\comp_{i=k}^{n}\theta_i\theta_i'(t)\big)+4\sum_{i=k}^{n}\l_i.$$
\end{Prop}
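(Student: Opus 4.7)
The plan is to induct on $n - k$, with the central estimate being that each $g_i \colon T_{\e_{i+1}} \to T_{\e_i}$ is ``nearly the identity'' in the sense that $\sup_{y \in T_{\e_{i+1}}} d(y, g_i(y)) \le \l_i$ (up to absolute constants). This estimate follows from properties \eqref{sequences(b)} and \eqref{sequences(c)}: outside $B_{S^1}(0, \l_i)$ the tubular-neighborhood map $g_i$ fixes the corresponding slice of the tube because $f_i$ coincides with the core there, while inside the ball both $y$ and $g_i(y)$ are confined to a region of diameter comparable to $\l_i$ by the gap bounds \eqref{sequences(g)}. This is precisely the hypothesis needed to invoke Proposition~\ref{prop:isoLip}.

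For the inductive step, set $F_j = \comp_{i=j}^n g_i \theta_i$ and $G_j = \comp_{i=n}^j \theta'_i g_i^{-1}$ and write
\[
  F_k \circ G_k = (g_k \theta_k) \circ \bigl(F_{k+1} \circ G_{k+1}\bigr) \circ (\theta'_k g_k^{-1}).
\]
Apply Proposition~\ref{prop:isoLip} once with $g = g_k$ and isometry $\theta = \theta_k$ to strip the outermost $g_k$ at cost $+2\l_k$, and again on the right to strip the innermost $g_k^{-1}$ at cost another $+2\l_k$. What remains is the interior $F_{k+1} \circ G_{k+1}$ evaluated at a point of $\Im g_n$ and sandwiched between the rotations $\theta_k$ and $\theta'_k$. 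Because all rotations $\theta^s$ commute and are isometries, they can be absorbed cleanly into the cumulative isometry, and the inductive hypothesis controls the inner part by $d(t, \comp_{i=k+1}^n \theta_i \theta'_i(t)) + 4 \sum_{i=k+1}^n \l_i$. Adjoining $\theta_k, \theta'_k$ extends the cumulative rotation from $\comp_{i=k+1}^n \theta_i \theta'_i$ to $\comp_{i=k}^n \theta_i \theta'_i$, and the two $+2\l_k$ contributions from the stripping give the missing $4\l_k$. The base case is either $n = k$ (a single invocation of Proposition~\ref{prop:isoLip}, with the bound $d(t, g_k \theta_k \theta'_k g_k^{-1}(t)) \le d(t, \theta_k \theta'_k(t)) + 4\l_k$ via two applications) or one can simply check that the inductive scheme runs from trivial $n - k = 0$.

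The main obstacle I expect is the bookkeeping of domains: at every stripping we must know that the intermediate point lies in $\Im g_i$ when we want to invoke $g_i^{-1}$, and in $T_{\e_{i+1}}$ when we want to invoke $g_i$, so that Proposition~\ref{prop:isoLip} is legitimately applicable. This relies on the hypothesis $t \in \Im g_n$ together with the nested tubular geometry built in Proposition~\ref{prop:SequencesOfEmbeddings}, the fact that each rotation $\theta^s$ preserves every level $T_{\e_j}$ (so that the intermediate rotations do not dislodge a point from its tube), and the slack $4\l_{i+1} < 2\e_{i+1} < \l_i$ afforded by \eqref{sequences(g)}, which provides the headroom to keep the nested tubes genuinely nested after each perturbation.
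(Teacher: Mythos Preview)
Your approach is essentially the paper's own: the paper's proof consists of the single sentence ``Applying Proposition~\ref{prop:isoLip} $2(n-k+1)$ times in a row one obtains the result,'' together with the remark that the first application uses $\theta=\id$ to peel off the outermost $g$. Your induction on $n-k$, peeling two layers (one $g_k$ and one $g_k^{-1}$) per step at cost $2\l_k$ each, is exactly this iterated application, merely organized recursively; your discussion of the displacement bound $\sup_y d(y,g_i y)\lesssim \l_i$ and the domain bookkeeping fills in details the paper leaves implicit.
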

\begin{proof}
  Applying Proposition~\ref{prop:isoLip} $2(n-k+1)$ times in a row one obtains the result.
  The first of these applications is the special case where $\theta=\theta^{0}=\id$:
  \begin{equation*}
    d(x,g y)\le d(g^{-1}x, y)+2\l.\qedhere
  \end{equation*}
\end{proof}

\begin{Prop}\label{prop:anotherineq}
  Fix $n$ and $k$ and let $x\in \Im\hat g_{n+k}$. Then
  $$d(H_{n}(x),H_{n+k}(x))\le 10\sup_{m\ge n}|x_m'-x_m|+20\e_n.$$
\end{Prop}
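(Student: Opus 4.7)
The plan is to express both $H_n(x)$ and $H_{n+k}(x)$ as $\hat g'_n$ applied to points in $T_{\e_{n+1}}$ and then use the $5$-Lipschitz property of $\hat g'_n$ (Proposition~\ref{prop:IsLip}) to reduce the problem to a distance estimate in $T_{\e_{n+1}}$. Set $c_i:=x'_i-x_i$, $v:=\hat g_{n+k}^{-1}(x)$, and write $\hat g_{n+k}=\hat g_n\psi$, $\hat g'_{n+k}=\hat g'_n\psi'$, where
$$\psi:=\comp_{i=n+1}^{n+k}\theta^{x_i}g_i\theta^{-x_i},\qquad \psi':=\comp_{i=n+1}^{n+k}\theta^{x'_i}g_i\theta^{-x'_i}.$$
By Proposition~\ref{prop:Hggpr}, $H_n(x)=\hat g'_n(\theta^{c_n}\psi(v))$ and $H_{n+k}(x)=\hat g'_n(\psi'\theta^{c_{n+k}}(v))$, so with $t:=\theta^{c_n}\psi(v)$ and $F:=\psi'\theta^{c_{n+k}}\psi^{-1}\theta^{-c_n}$ we have $d(H_n(x),H_{n+k}(x))\le 5\,d(t,F(t))$.

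Next I expand $F$ and exploit the cancellations $\theta^{-x'_{n+k}}\theta^{c_{n+k}}\theta^{x_{n+k}}=\id$ and $g_{n+k}g_{n+k}^{-1}=\id$ on $\Im g_{n+k}$; after combining neighbouring rotations, $F$ takes the shape
$$F=\theta^{x'_{n+1}}\cdot\tilde F\cdot\theta^{-x_{n+1}-c_n},$$
where $\tilde F$ is precisely of the form treated by Proposition~\ref{supertechnicalclaim} with $\theta_i:=\theta^{x'_{i+1}-x'_i}$, $\theta'_i:=\theta^{x_i-x_{i+1}}$ for $i=n+1,\dots,n+k-2$, and $\theta_{n+k-1}\theta'_{n+k-1}:=\theta^{y_{n+k}}$. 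A direct telescoping gives $\theta_i\theta'_i=\theta^{y_{i+1}}$, so the composed rotation is $\comp_{i=n+1}^{n+k-1}\theta_i\theta'_i=\theta^{c_{n+k}-c_{n+1}}$.

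Let $\bar t:=\theta^{-x_{n+1}-c_n}(t)$; since $t$ begins life as $\theta^{c_n+x_{n+1}}g_{n+1}(\ldots)$ we have $\bar t\in\Im g_{n+1}$, and Proposition~\ref{supertechnicalclaim} applies to yield $d(\bar t,\tilde F(\bar t))\le d(\bar t,\theta^{c_{n+k}-c_{n+1}}\bar t)+4\sum_{i=n+1}^{n+k-1}\l_i$. Applying the isometry $\theta^{-x'_{n+1}}$ to both arguments of $d(t,F(t))$ turns it into $d(\theta^{-y_{n+1}}\bar t,\tilde F(\bar t))$ (using $x_{n+1}+c_n-x'_{n+1}=-y_{n+1}$), and the triangle inequality combined with the key telescoping identity $c_{n+k}-c_{n+1}+y_{n+1}=c_{n+k}-c_n$ gives
$$d(t,F(t))\le |c_{n+k}-c_n|+4\sum_{i\ge n+1}\l_i.$$
Finally $|c_{n+k}-c_n|\le 2\sup_{m\ge n}|x'_m-x_m|$, while the geometric decay from \eqref{sequences(g)} yields $\sum_{i\ge n+1}\l_i<\e_n$, so multiplying the whole estimate by $5$ delivers the advertised $10\sup_{m\ge n}|x'_m-x_m|+20\e_n$.

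The main obstacle is the algebraic step that repackages $F$ into $\theta^{x'_{n+1}}\tilde F\theta^{-x_{n+1}-c_n}$ with the correct net rotation; it is the sharp identity $c_{n+k}-c_{n+1}+y_{n+1}=c_{n+k}-c_n$ (rather than the naive bound $|c_{n+k}-c_{n+1}|+|y_{n+1}|\le 4\sup_{m\ge n}|x'_m-x_m|$) that keeps the coefficient on the supremum at $10$ and prevents the linear-in-$k$ growth one would obtain from direct telescoping.
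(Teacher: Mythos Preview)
Your approach is essentially the paper's: factor both $H_n(x)$ and $H_{n+k}(x)$ through a common $\hat g'$, use its $5$-Lipschitz bound, and then reduce to Proposition~\ref{supertechnicalclaim}. The only structural difference is that you factor through $\hat g'_n$ whereas the paper factors through $\hat g'_{n-1}$; this index shift is harmless in itself and your algebra up to $d(t,F(t))=d(\theta^{-y_{n+1}}\bar t,\tilde F(\bar t))$ is correct.

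The gap is in the step where you claim that ``the triangle inequality combined with the key telescoping identity $c_{n+k}-c_{n+1}+y_{n+1}=c_{n+k}-c_n$'' yields $d(t,F(t))\le |c_{n+k}-c_n|+4\sum\l_i$. What you actually have from Proposition~\ref{supertechnicalclaim} is the inequality
\[
d(\bar t,\tilde F(\bar t))\le d(\bar t,\theta^{c_{n+k}-c_{n+1}}\bar t)+4\sum\l_i,
\]
and the triangle inequality then gives
\[
d(\theta^{-y_{n+1}}\bar t,\tilde F(\bar t))\le |y_{n+1}|+|c_{n+k}-c_{n+1}|+4\sum\l_i,
\]
which is \emph{not} $\le |c_{n+k}-c_n|+4\sum\l_i$ (take e.g.\ $c_n=c_{n+k}=0$, $c_{n+1}=1$). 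The telescoping identity holds for the \emph{sum} of the angles, but once you have passed through Proposition~\ref{supertechnicalclaim} and the triangle inequality you are adding their absolute values, and the cancellation is lost. With this argument you only get $4\sup_{m\ge n}|c_m|$ before multiplying by~$5$, hence $20\sup$ rather than~$10\sup$.

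The fix is immediate: instead of applying Proposition~\ref{supertechnicalclaim} to $\tilde F$ and then invoking the triangle inequality, run the iterated Proposition~\ref{prop:isoLip} argument directly on $\theta^{y_{n+1}}\tilde F$ (Proposition~\ref{prop:isoLip} already allows a leading rotation in its first application). The composed rotation is then $\theta^{y_{n+1}}\cdot\theta^{c_{n+k}-c_{n+1}}=\theta^{c_{n+k}-c_n}$ and you obtain $|c_{n+k}-c_n|\le 2\sup_{m\ge n}|c_m|$ as desired. Equivalently, factoring through $\hat g'_{n-1}$ as the paper does makes the extra $\theta^{y_{n+1}}$ disappear from the outset, so that Proposition~\ref{supertechnicalclaim} applies verbatim.
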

\begin{proof}
  Since $x\in \Im\hat g_{n+k}$, we can write by \eqref{eq:HatFormOfH1}:
  \begin{eqnarray*}
    H_{n}(x)&=&\hat g'_{n-1}\theta^{x'_{n}}\theta^{-x_n}\hat g_{n-1}^{-1}(x)\\    
    \text{and }H_{n+k}(x)&=&\hat g'_{n+k-1}\theta^{x'_{n+k}}\theta^{-x_{n+k}}\hat g_{n+k-1}^{-1}(x).
  \end{eqnarray*}
  Applying \eqref{eq:rotationform} we can further rewrite the
  expression for $H_{n+k}$:
  $$H_{n+k}(x)=\hat g'_{n-1}\comp_{i=n}^{n+k-1}\theta^{x_i'}g_i\theta^{-x_i'}
  \circ \theta^{x'_{n+k}}\theta^{-x_{n+k}}\hat g_{n+k-1}^{-1}(x).$$
  Adopting notations
  \begin{eqnarray*}
    a&=&\theta^{x'_{n}}\theta^{-x_n}\hat g_{n-1}^{-1}(x)\text{ and }\\
    b&=&\comp_{i=n}^{n+k-1}\theta^{x_i'}g_i\theta^{-x_i'} \circ \theta^{x'_{n+k}}\theta^{-x_{n+k}}\hat g_{n+k-1}^{-1}(x)
  \end{eqnarray*}
  and using Proposition~\ref{prop:IsLip} we get:
  \begin{equation}
    \label{eq:blahh2}
    d(H_n(x),H_{n+k}(x))\le d(\hat g'_{n-1}(a),\hat g'_{n-1}(b))\le 5 d(a,b).
  \end{equation}
  Now we can further rewrite $b$ by expanding the term $\hat g^{-1}_{n+k-1}$
  using \eqref{eq:rotationform} as
  \begin{eqnarray*}
    \hat g^{-1}_{n+k-1}&=&\comp_{i=n+k-1}^{0}\theta^{x_i}g^{-1}_i\theta^{-x_i}\\
    &=&\comp_{i=n+k-1}^{n}\theta^{x_i}g^{-1}_i\theta^{-x_i} \circ \comp_{i=n-1}^{0}\theta^{x_i}g^{-1}_i\theta^{-x_i}\\
    &=&\comp_{i=n+k-1}^{n}\theta^{x_i}g^{-1}_i\theta^{-x_i}\circ \hat g_{n-1}^{-1}\\
    &=&\comp_{i=n+k-1}^{n+1}\theta^{x_i}g^{-1}_i\theta^{-x_i}\circ \theta^{x_n}g^{-1}_n\theta^{-x_n}\hat g_{n-1}^{-1}\\
  \end{eqnarray*}
  so
  $$b=\comp_{i=n}^{n+k-1}\theta^{x_i'}g_i\theta^{-x_i'} \circ \theta^{x'_{n+k}}\theta^{-x_{n+k}}\circ\comp_{i=n+k-1}^{n+1}\theta^{x_i}g^{-1}_i\theta^{-x_i}\circ \theta^{x_n}g^{-1}_n\circ \theta^{-x_n}\hat g_{n-1}^{-1}(x).$$
  Now, using the substitution $t=\theta^{-x_n}\hat g_{n-1}^{-1}(x)$
  rewrite both $a$ and $b$ again:
  \begin{eqnarray*}
    a&=&\theta^{x'_{n}}(t),\\
    b&=&\comp_{i=n}^{n+k-1}\theta^{x_i'}g_i\theta^{-x_i} \circ \theta^{x'_{n+k}}\theta^{-x_{n+k}}\comp_{i=n+k-1}^{n+1}\theta^{x_i}g^{-1}_i\theta^{-x_i}\circ \theta^{x_n}g^{-1}_n(t)
  \end{eqnarray*}
  Because $\theta^{-x'_n}$ is an isometry, we have
  \begin{equation}
    d(a,b)=d(\theta^{-x'_n}a,\theta^{-x'_n}b)=d(t,\theta^{-x'_n}b).\label{eq:eqblah1}
  \end{equation}
  So let us define $c=\theta^{-x'_n}b$.  Now
  \begin{eqnarray*}
    c&=&\theta^{-x'_n}\comp_{i=n}^{n+k-1}\theta^{x_i'}g_i\theta^{-x_i'} \circ 
    \theta^{x'_{n+k}}\theta^{-x_{n+k}} \comp_{i=n+k-1}^{n+1}\theta^{x_i}g^{-1}_i\theta^{-x_i}
    \circ \theta^{x_n} g^{-1}_n(t)\\
    &=&\theta^{-x'_n}\theta^{x'_n}g_n\theta^{-x'_n}\comp_{i=n+1}^{n+k-1}\theta^{x_i'}g_i\theta^{-x_i'} 
    \circ \theta^{x'_{n+k}}\theta^{-x_{n+k}}\comp_{i=n+k-1}^{n+1}\theta^{x_i}g^{-1}_i\theta^{-x_i}
    \circ \theta^{x_n}g^{-1}_n(t)\\
    &=&g_n\theta^{-x_n}\comp_{i=n+1}^{n+k-1}\theta^{x_i'}g_i\theta^{-x_i'} \circ \theta^{x'_{n+k}}\theta^{-x_{n+k}}\comp_{i=n+k-1}^{n+1}\theta^{x_i}g^{-1}_i\theta^{-x_i}\circ \theta^{x_n}g^{-1}_n(t)
  \end{eqnarray*}
  Now re-ordering the terms, we get:
  $$c=\comp_{i=n}^{n+k-1}g_i\theta^{x_{i+1}'-x_i'} \circ \comp_{i=n+k-1}^{n}\theta^{-(x_{i+1}-x_i)}g^{-1}_i(t).$$
  At this point we see that $c$ is of the form that we can apply
  Proposition~\ref{supertechnicalclaim} to $d(t,c)$. Thus:
  \begin{eqnarray*}
    d(t,c)&=&d\left(t,\comp_{i=n}^{n+k-1}g_i\theta^{x_{i+1}'-x_i'} \circ \comp_{i=n+k-1}^{n}\theta^{-(x_{i+1}-x_i)}g^{-1}_i(t)\right)\\
    &\le& d\left(t,\comp_{i=n+1}^{n+k}\theta^{(x_{i+1}'-x'_{i})-(x_{i+1}-x_i)} (t)\right) + 4\sum_{i=n}^{n+k}\l_i\\
    &=& d\left(t,\comp_{i=n+1}^{n+k}\theta^{(x_{i+1}'-x_{i+1})-(x'_{i}-x_i)} (t)\right) + 4\sum_{i=n}^{n+k}\l_i\\
    &=& d\left(t,\comp_{i=n+1}^{n+k}\theta^{y_{i+1}} (t)\right) + 4\sum_{i=n}^{n+k}\l_i.
  \end{eqnarray*}
  Now, $\comp_{i=n+1}^{n+k}\theta^{y_{i+1}}=\theta^{s}$ where $s=\sum_{i=n+1}^{n+k}y_{i+1}$. But an
  easy calculation shows that in fact $s=(x'_{n+k+1}-x_{n+k+1})-(x'_{n+1}-x_{n+1})$, so we obtain
  $$d(t,c)\le d\left(t,\theta^{(x'_{n+k+1}-x_{n+k+1})-(x'_{n+1}-x_{n+1})} (t)\right) + 4\sum_{i=n}^{n+k}\l_i,$$
  so applying  \eqref{eq:isometry} we get
  \begin{eqnarray}
    d(t,c)&\le &(x'_{n+k+1}-x_{n+k+1})-(x'_{n+1}-x_{n+1}) + 4\sum_{i=n}^{n+k}\l_i\nonumber\\
    &\le &|x'_{n+k+1}-x_{n+k+1}|+|x'_{n+1}-x_{n+1}| + 4\sum_{i=n}^{n+k}\l_i \nonumber\\
    &\le &2\sup_{m\ge n}|x'_{m}-x_{m}| + 4\sum_{i=n}^{n+k}\l_i \label{eq:blah3}
  \end{eqnarray}

  Now combining \eqref{eq:blahh2}, \eqref{eq:eqblah1}, the fact that $c=\theta^{-x'_n}b$, \eqref{eq:blah3}
  and \eqref{sequences(g)} we get:
  \begin{align*}
    d(H_n(x),H_{n+k}(x))&\le2\cdot 5\sup_{m\ge n}|x'_{m}-x_{m}| + 4\cdot 5\sum_{i=n}^{n+k}\l_i\\
    &\le 10\sup_{m\ge n}|x'_{m}-x_{m}| + 20\sum_{i=n}^{\infty}\l_i\\
    &\le 10\sup_{m\ge n}|x'_{m}-x_{m}| + 20\e_n.\qedhere
  \end{align*}
\end{proof}

\begin{Prop}\label{prop:anotherineq2}
  Fix $n$ and $k\ge 1$ and let $x\in \Im\hat g_{n+k-1}$. Then
  $$d(H_{n}(x),H_{n+k}(x))\le 15\sup_{m\ge n}|x_m'-x_m|+50\e_n.$$
\end{Prop}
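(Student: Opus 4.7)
The plan is to mimic the proof of Proposition~\ref{prop:anotherineq} almost verbatim, compensating for the fact that we now only have $x\in\Im\hat g_{n+k-1}$ rather than the stronger hypothesis $x\in\Im\hat g_{n+k}$.  Since $k\ge 1$, we still have $x\in\Im\hat g_n$, so \eqref{eq:HatFormOfH1} gives
\[
H_n(x)=\hat g'_{n-1}\theta^{x'_n-x_n}\hat g_{n-1}^{-1}(x),
\]
but \eqref{eq:HatFormOfH2} (applied with $n$ replaced by $n+k-1$) now supplies
\[
H_{n+k}(x)=\hat g'_{n+k-1}H_{\e_{n+k}}^{x'_{n+k}-x_{n+k}}\hat g_{n+k-1}^{-1}(x),
\]
with the $H_{\e_{n+k}}$-factor in place of the $\theta$-factor appearing in Proposition~\ref{prop:anotherineq}.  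I therefore introduce the auxiliary ``would-be'' expression
\[
H^{*}_{n+k}(x):=\hat g'_{n+k-1}\theta^{x'_{n+k}-x_{n+k}}\hat g_{n+k-1}^{-1}(x),
\]
well defined whenever $x\in\Im\hat g_{n+k-1}$, and split via the triangle inequality
\[
d(H_n(x),H_{n+k}(x))\le d(H_n(x),H^{*}_{n+k}(x))+d(H^{*}_{n+k}(x),H_{n+k}(x)).
\]

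For the right summand I use the explicit formula $H_\e^s(b,t)=(b,t+r(b/\e,s))$ to check that the pointwise discrepancy between $H_\e^s$ and $\theta^s$ on $T_\e$ is at most $|s|$ (since $|r(b/\e,s)-s|=2|s|\max\{0,|b|/\e-1/2\}\le|s|$ for $|b|\le\e$).  Applied to $y=\hat g_{n+k-1}^{-1}(x)\in T_{\e_{n+k}}$ with $s=x'_{n+k}-x_{n+k}$, and combined with Proposition~\ref{prop:IsLip} applied to $\hat g'_{n+k-1}$, this gives
\[
d(H^{*}_{n+k}(x),H_{n+k}(x))\le 5|x'_{n+k}-x_{n+k}|\le 5\sup_{m\ge n}|x'_m-x_m|.
\]

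For the left summand I transplant the computation in the proof of Proposition~\ref{prop:anotherineq}, reading ``$H_{n+k}(x)$'' there as our $H^{*}_{n+k}(x)$; this yields
\[
d(H_n(x),H^{*}_{n+k}(x))\le 10\sup_{m\ge n}|x'_m-x_m|+20\e_n.
\]
The main obstacle is to certify that the chain of manipulations there does not tacitly need the stronger hypothesis $x\in\Im\hat g_{n+k}$.  Inspecting the argument, the only substantive requirements are (i) that $\hat g_{n-1}^{-1}(x)$ and $\hat g_{n+k-1}^{-1}(x)$ be defined, and (ii) that the invocation of Proposition~\ref{supertechnicalclaim} be legal, i.e.\ that $t=\theta^{-x_n}\hat g_{n-1}^{-1}(x)\in\Im g_n$, which amounts to $x\in\Im\hat g_n$.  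All of these hold because $x\in\Im\hat g_{n+k-1}\subset\Im\hat g_n$.  Adding the two estimates yields $d(H_n(x),H_{n+k}(x))\le 15\sup_{m\ge n}|x'_m-x_m|+20\e_n$, which is strictly stronger than the claimed $50\e_n$-bound.
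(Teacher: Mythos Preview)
Your argument is correct and takes a genuinely different route from the paper's.

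The paper does \emph{not} rerun the computation of Proposition~\ref{prop:anotherineq}. Instead it perturbs the point: given $x\in\Im\hat g_{n+k-1}$ it chooses a nearby $x^*\in\Im\hat g_{n+k}$ (by pushing $y=\hat g_{n+k-1}^{-1}(x)$ to a point $y^*$ in $\Im(\theta^{x_{n+k}}g_{n+k}\theta^{-x_{n+k}})$ within distance $\e_{n+k}$), applies Proposition~\ref{prop:anotherineq} directly to $x^*$, and then controls the two error terms $d(H_n(x),H_n(x^*))$ and $d(H_{n+k}(x),H_{n+k}(x^*))$ using the Lipschitz estimates of Proposition~\ref{prop:IsLip} together with~\eqref{eq:HomeoLip}. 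Summing these three pieces yields the stated bound with the constant~$50\e_n$.

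Your approach replaces $H_{n+k}$ by the auxiliary $H^{*}_{n+k}$ having the $\theta$-form of~\eqref{eq:HatFormOfH1}, observes that the entire derivation in the proof of Proposition~\ref{prop:anotherineq} is purely algebraic once one has both displayed formulas for $H_n(x)$ and for $H^{*}_{n+k}(x)$, and then bounds the discrepancy $d(H^{*}_{n+k}(x),H_{n+k}(x))$ directly via the pointwise estimate $d(\theta^s(y),H^s_\e(y))\le|s|$. Your check that the hypotheses needed to rerun the argument (namely $x\in\Im\hat g_{n-1}$, $x\in\Im\hat g_{n+k-1}$, and $t=\theta^{-x_n}\hat g_{n-1}^{-1}(x)\in\Im g_n$ for the invocation of Proposition~\ref{supertechnicalclaim}) all follow from $x\in\Im\hat g_{n+k-1}$ with $k\ge1$ is sound; indeed the well-definedness of the expression $c$ and the peeling steps in Proposition~\ref{supertechnicalclaim} depend on exactly the same data as in the original proof.

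What each approach buys: the paper's perturbation argument is a black-box reduction to Proposition~\ref{prop:anotherineq} and avoids re-opening that proof, at the cost of two extra error terms and a looser constant. Your argument is more economical---no auxiliary point $x^*$---and yields the sharper bound $15\sup_{m\ge n}|x'_m-x_m|+20\e_n$, though for the downstream application in~\eqref{eq:ImportantEq} either constant suffices.
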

\begin{proof}
  Let $y\in T_{\e_{n+k}}$ be such that $\hat g_{n+k-1}(y)=x$ 
  and let
  $y^*\in \Im (\theta^{x_{n+k}}g_{n+k}\theta^{-x_{n+k}})$ with $d(y,y^*)\le \e_{n+k}$.
  Denote $x^*= \hat g_{n+k-1}(y^*)$. 
  Now $x^*\in \Im (\hat g_{n+k-1}\circ \theta^{x_{n+k}}g_{n+k}\theta^{-x_{n+k}})=\Im(\hat g_{n+k})$.
  By Proposition \ref{prop:IsLip} we have that
  $d(x^*,x)\le 5\e_{n+k}$. Let $z^*=\hat g_{n-1}^{-1}(x^*)$ and $z=\hat g_{n-1}^{-1}(x)$. Now
  \begin{eqnarray*}
    z^*&=&\hat g_{n-1}^{-1}\hat g_{n+k-1}(y^*) \\
    &=&\comp_{k=n-1}^{0} \theta^{x_k} g^{-1}_k\theta^{-x_k}\circ\comp_{k=0}^{n+k-1}\theta^{x_k} g^{-1}_k\theta^{-x_k}(y^*)\\
    &=&\comp_{k=n}^{n+k-1}\theta^{x_k} g_k\theta^{-x_k}(y^*).
  \end{eqnarray*}
  Similarly
  $$z=\comp_{k=n}^{n+k-1}\theta^{x_k} g^{-1}_k\theta^{-x_k}(y).$$
  So we have $d(z,z^*)=d(h(y),h(y^*))$ where 
  $$h=\comp_{k=n}^{n+k-1}\theta^{x_k} g_k\theta^{-x_k}.$$
  By the same argument as in the proof of Proposition~\ref{prop:IsLip}
  we have that $h$ is $5$-Lipschitz, so
  $$d(z,z^*)\le 5d(y,y^*)\le 5\e_{n+k}.$$
  Additionally from \eqref{eq:HomeoLip}  
  we have
  $$d(H_{\e_{n+k}}^{x_{n+k}'-x_{n+k}}(y),H_{\e_{n+k}}^{x_{n+k}'-x_{n+k}}(y^*))\le d(y,y^*)+|x_{n+k}'-x_{n+k}|.$$

  Now from triangle inequality we get:
  $$d(H_{n}(x),H_{n+k}(x))\le  d(H_{n}(x),H_{n}(x^*))+d(H_{n}(x^*),H_{n+k}(x^*))+d(H_{n+k}(x^*),H_{n+k}(x))$$
  Let us consider the three terms separately.
  The first term  by
  \eqref{eq:HatFormOfH1} and Proposition~\ref{prop:IsLip}:
  \begin{eqnarray*}
    d(H_{n}(x),H_{n}(x^*)) &\le& d(\hat g'_{n-1}\theta^{x_n'-x_n}\hat g_{n-1}^{-1}(x),\hat g'_{n-1}\theta^{x_n'-x_n}\hat g_{n-1}^{-1}(x^*))\\
    &=& d(\hat g'_{n-1}\theta^{x_n'-x_n} (z),\hat g'_{n-1}\theta^{x_n'-x_n}(z^*))\\
    &\le & 5d(\theta^{x_n'-x_n} (z),\theta^{x_n'-x_n}(z^*))\\
    &=& 5d(z,z^*) \\
    &\le& 25\e_{n+k}\\
    &\le& 25\e_{n}.
  \end{eqnarray*}
  The second term using Proposition~\ref{prop:anotherineq}:
  $$d(H_{n}(x^*),H_{n+k}(x^*))\le 10\sup_{m\ge n}|x_m'-x_m|+20\e_n.$$
  And the third term using \eqref{eq:HatFormOfH2}, \eqref{eq:HomeoLip}, Proposition~\ref{prop:IsLip} and the facts
  that $y=\hat g^{-1}_{n+k-1}(x)$, $y^*=\hat g^{-1}_{n+k-1}(x^*)$ and $d(y,y^*)<\e_{n+k}$:
  \begin{eqnarray*}
    d(H_{n+k}(x^*),H_{n+k}(x))&\le&     
    d(\hat g'_{n+k-1}H_{\e_{n+k}}^{x_{n+k}'-x_{n+k}}\hat g_{n+k-1}^{-1}(x),\hat g'_{n+k-1}H_{\e_{n+k}}^{x_{n+k}'-x_{n+k}}\hat g_{n+k-1}^{-1}(x^*))\\
    &=& d(\hat g'_{n+k-1}H_{\e_{n+k}}^{x_{n+k}'-x_{n+k}}(y),\hat g'_{n+k-1}H_{\e_{n+k}}^{x_{n+k}'-x_{n+k}}(y^*))\\
    &\le & 5d(H_{\e_{n+k}}^{x_{n+k}'-x_{n+k}}(y),H_{\e_{n+k}}^{x_{n+k}'-x_{n+k}}(y^*))\\
    &\le& 5d(y,y^*)+5|x'_{n+k}-x_{n+k}|\\
    &\le&5\e_{n+k}+5|x'_{n+k}-x_{n+k}|\\
    &\le&5\e_{n}+5\sup_{m>n}|x'_{m}-x_{m}|.
  \end{eqnarray*}
  Combining these we get:
  \begin{equation*}
    d(H_{n}(x),H_{n+k}(x))\le 15\sup_{m\ge n}|x_m'-x_m|+ 50\e_n.\qedhere
  \end{equation*}
\end{proof}

By the definition of $H_{n}$, for all $x\notin \Cap_{n\in\N}\Im (\hat g_{n})$
there is $m$ such that for all $n>m$ we have $H_m(x)=H_n(x)$. In fact
if $n$ is such that $x\notin \Im \hat g_n$, then $H_m(x)=H_n(x)$ for all $m>n.$
Thus the point-wise limit
$H_{\infty}=\lim_{n\to \infty}H_n$ is well defined on the complement of $K(\bar x)$
for which we have
\begin{equation}
  \label{eq:term}
  x\notin \Im \hat g_n \Rightarrow H_\infty(x)=H_n(x)
\end{equation}
$H_\infty$ is a homeomorphism taking the complement
of $K(\bar x)$ to the complement of~$K(\bar x')$.

Suppose $x\in \Im \hat g_{n-1}\setminus \Im \hat g_{n}$ and 
$z\in \Im\hat g_{n+k-1}\setminus \Im \hat g_{n+k}$, $k\ge 1$. Now from \eqref{eq:term} and
Propositions~\ref{prop:Hbilipschitz2} and~\ref{prop:anotherineq2} we have:
\begin{eqnarray*}
  d(H_{\infty}(x),H_\infty(z))&=&d(H_{n}(x),H_{n+k}(z))\\
  &\le& d(H_{n}(x),H_n(z))+d(H_n(z),H_{n+k}(z))\\
  &\le& 25d^{p(n-1)}(x,z)+5|x'_n-x_n| + 15\sup_{m\ge n}|x'_{m}-x_{m}| + 50\e_n.\\
  &\le& 25d^{p(n-1)}(x,z)+20\sup_{m\ge n}|x'_{m}-x_{m}| + 50\e_n.
\end{eqnarray*}

The latter is independent of $k$, so (changing $n-1$ to $n$) we have for all 
$x\in \Im \hat g_{n}\setminus \Im \hat g_{n+1}$ and $z\in\Im \hat g_{n+1}$ that:
\begin{equation}
  d(H_{\infty}(x),H_\infty(z))\le 100(d^{p(n)}(x,z) + \sup_{m> n}|x'_{m}-x_{m}| + \e_{n+1}). \label{eq:ImportantEq}
\end{equation}

\begin{Prop}\label{prop:Cauchy}
  If $(z_n)$ is a Cauchy sequence with $z_n\in \Im \hat g_{n}$ then for every $\e$ there is $n$ such that for
  all $m>n$ we have $d^{p(n)}(z_{n},z_{m})<\e$.
\end{Prop}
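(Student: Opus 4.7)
The plan is to exploit the $5$-bilipschitzness of $\hat g_n$ from Proposition~\ref{prop:Hbilipschitz} in order to translate the standard-metric Cauchy property of $(z_n)$ into a path-metric estimate on $\Im \hat g_n$. Since $T_1$ is complete, let $z^{*} = \lim_n z_n$; because each $\Im \hat g_k$ is closed and contains $z_n$ for all $n \ge k$, the limit lies in $\Cap_k \Im \hat g_k = K(\bar x)$. Furthermore, for $m > n$ we factor $\hat g_m = \hat g_n \circ \comp_{k=n+1}^{m}\theta^{x_k} g_k \theta^{-x_k}$, and iterating the inclusion $\Im g_k \subset T_{\l_k}$ coming from Proposition~\ref{prop:SequencesOfEmbeddings}(\ref{sequences(a)}) gives $\hat g_n^{-1}(z_m) \in T_{\l_{n+1}}$ for all $m > n$, and likewise $\hat g_n^{-1}(z^{*}) \in T_{\l_{n+1}}$.

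Since the standard metric on $T_{\e_{n+1}}$ coincides with its path metric, Proposition~\ref{prop:Hbilipschitz} yields
$$d^{p(n)}(z_n, z_m) \le 5\, d_{T_{\e_{n+1}}}(\hat g_n^{-1}(z_n), \hat g_n^{-1}(z_m)),$$
so it suffices to make the right side less than $\e$ for all $m > n$, for some well-chosen $n$. I would decompose $\hat g_n^{-1}(z_n) = (u_n, s_n)$ and $\hat g_n^{-1}(z_m) = (u_m, s_m)$ in the product $T_{\e_{n+1}} = D^2_{\e_{n+1}} \times S^1$. The disc contribution is at most $|u_n| + |u_m| \le \e_{n+1} + \l_{n+1}$. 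For the circle contribution, Proposition~\ref{prop:IsLip} relates $z_n$ to $\hat f_n(s_n)$ (within $5\e_{n+1}$) and $z_m$ to $\hat f_n(s_m)$ (within $5\l_{n+1}$), so $d(\hat f_n(s_n), \hat f_n(s_m)) \le d(z_n, z_m) + 10\e_{n+1}$; inverting via the constant $M_n = \prod_{k=0}^{n}\e_k^2/\e_{k+1}$ coming from the colipschitz bound in Proposition~\ref{prop:SequencesOfEmbeddings}(\ref{sequences(f)}) gives $d_{S^1}(s_n, s_m) \le M_n\bigl(d(z_n, z_m) + 10\e_{n+1}\bigr)$.

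Combining, I obtain an estimate of the form
$$d^{p(n)}(z_n, z_m) \le C \bigl(\e_{n+1} + M_n \e_{n+1} + M_n d(z_n, z_m)\bigr).$$
The key algebraic identity is $M_n \e_{n+1} = \prod_{k=1}^{n} \e_k$ (using $\e_0 = 1$), which by condition (\ref{sequences(g)}) decays super-exponentially in $n$; hence for large $n$ the first two terms are negligible. The main obstacle is the third term: although the Cauchy property guarantees $\sup_{m > n} d(z_n, z_m) \to 0$, the Lipschitz constant $M_n$ may grow, so one cannot simply invoke continuity of $\hat g_n^{-1}$ uniformly. This is handled by a diagonal argument that exploits the explicit super-exponential decay of $\prod_{k=1}^{n}\e_k$ against the Cauchy tails of $(z_n)$ to produce an $n$ with $M_n \sup_{m > n} d(z_n, z_m) < \e/(2C)$, at which point all three terms are controlled and the proof is complete.
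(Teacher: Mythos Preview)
Your setup through the product decomposition is fine, and the identity $M_n\e_{n+1}=\prod_{k=1}^{n}\e_k$ is correct. The gap is in the last step: the ``diagonal argument'' does not go through, because the quantity you actually need to control is $M_n\sup_{m>n}d(z_n,z_m)$, and the super-exponential decay of $\prod_{k=1}^{n}\e_k$ says nothing about this product. Indeed $M_n=\bigl(\prod_{k=1}^{n}\e_k\bigr)/\e_{n+1}$, and in the construction of Proposition~\ref{prop:SequencesOfEmbeddings} the number $\e_{n+1}$ is constrained only from \emph{above} (it is chosen small enough to satisfy several conditions, never large enough), so $M_n$ can grow as fast as one likes relative to $n$. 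On the other side, the Cauchy tails $\sup_{m>n}d(z_n,z_m)$ are determined by the given sequence $(z_n)$, which is arbitrary subject to $z_n\in\Im\hat g_n$; for instance $z_n=f^{\bar x}(s_n)$ with any slowly convergent $(s_n)\subset S^1$ is a legitimate sequence whose tails decay at a rate completely unrelated to~$\e_{n+1}$. There is no mechanism tying these two rates together, so the product $M_n\sup_{m>n}d(z_n,z_m)$ need not be small for any~$n$.

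The fix is to \emph{decouple} the two indices. Rather than applying the global colipschitz constant of $\hat g_n$ at the same $n$ whose Cauchy tail you are estimating, freeze one index $N$ (hence a single, fixed colipschitz constant for $\hat g_N$) and then choose indices $n',m$ much larger so that $d(z_{n'},z_m)$ is small compared to that fixed constant. The paper does exactly this via contradiction: assuming a subsequence with $d^{p(n_k)}(z_{n_k},z_{n_{k+1}})\ge\e$, it fixes $k$ (so that the colipschitz data of $\hat g_{n_k}$ are frozen) and then pushes $k'>k$ far enough that $d(z_{n_{k'}},z_{n_{k'+1}})<\e_{n_k}$, producing the contradiction. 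Your direct approach could be repaired along the same lines, but as written the final sentence is precisely where the argument breaks.
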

\begin{proof}
  If not, pick an $\e$ and a subsequence $(z_{n_k})$ so that for all
  $k$ we have $d^{p(n(k))}(z_{n(k)},z_{n(k+1)})\ge \e$ where $n_k=n(k)$.  
  Fix $k$ so large
  that $5\e_{n(k)-1}<\e$. Now for all $k'$ we have
  $d^{p(n(k'))}(z_{n(k')},z_{n(k'+1)})\ge \e \ge 5\e_{n(k)-1}$.  Now we can
  pick $k'>k$ to be so big that $d(z_{n(k')},z_{n(k'+1)})<\e_{n(k)}$. By
  Proposition~\ref{prop:Hbilipschitz}, we have
  $$d^{p(n(k'))}(z_{n(k')},z_{n(k'+1)})\le 5d(\hat g^{-1}_{n(k)}(z_{n(k')}),\hat g^{-1}_{n(k)}(z_{n(k'+1)})).$$ 
  So we have
  \begin{eqnarray*}
    5\e_{n(k)-1}&\le& d^{p(n(k'))}(z_{n(k')},z_{n(k'+1)})\\
    &\le& 5 d(\hat g^{-1}_{n(k)}(z_{n(k')}),\hat g^{-1}_{n(k)}z_{n(k'+1)})\\
    \iff\e_{n(k)-1}&\le&  d(\hat g^{-1}_{n(k)}z_{n(k')},\hat g^{-1}_{n(k)}z_{n(k'+1)})
  \end{eqnarray*}
  And because $\hat g_{n(k)}$ is $\e_{n(k)-1}/\e_{n(k)}$-colipschitz
  (cf.~\eqref{eq:colpischitz}),
  $$d(z_{n(k)},z_{n(k')})\ge \e_{n(k)}/\e_{n(k)-1}d(\hat g^{-1}_{n(k)}z_{n(k')},\hat g^{-1}_{n(k)}z_{n(k'+1)}) \ge \e_{n(k)}$$
  which is a contradiction with the choice of~$k'$.
\end{proof}

\begin{Prop}\label{prop:EquivalentKnots}
  If $\lim_{n\to\infty}(x'_{n}-x_n)=0$, then the knots $K(\bar x)$ and
  $K(\bar x')$ are equivalent.
\end{Prop}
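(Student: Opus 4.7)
The plan is to extend the partially defined $H_\infty$ to a self-homeomorphism of $T_1$ by taking the pointwise limit of $(H_n)$, and then to extend by the identity on $S^3\setminus T_1$ to obtain a self-homeomorphism of $S^3$ carrying $K(\bar x)$ to $K(\bar x')$. The main step is upgrading the pointwise convergence of $(H_n)$ to uniform convergence.

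First I will show that for every $x\in T_1$ and every $k\ge 0$ the uniform Cauchy bound
\[
d(H_n(x),H_{n+k}(x))\le 15\sup_{m\ge n}|x_m'-x_m|+50\e_n
\]
holds. If $x\in \Im\hat g_{n+k-1}$ this is exactly Proposition~\ref{prop:anotherineq2}. Otherwise, letting $N=N(x)$ be the largest index with $x\in \Im\hat g_N$, the inductive definition \eqref{eq:DefOfH_n} yields $H_m(x)=H_{N+1}(x)$ for all $m\ge N+1$: if $N<n$ then $H_n(x)=H_{n+k}(x)$ and the bound is trivial, while if $n\le N\le n+k-2$ one applies Proposition~\ref{prop:anotherineq2} with $k$ replaced by $N+1-n$ to the pair $(H_n(x),H_{N+1}(x))=(H_n(x),H_{n+k}(x))$ and obtains the same bound. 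Under the hypothesis $x_m'-x_m\to 0$, both terms on the right tend to zero as $n\to\infty$, so $(H_n)$ is uniformly Cauchy and the limit $H_\infty\colon T_1\to T_1$ is continuous.

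Next I will identify the restriction of $H_\infty$ to $K(\bar x)$. Evaluating Proposition~\ref{prop:Hggpr} on the core circle $\{(\bar 0,s):s\in S^1\}\subset T_{\e_{n+1}}$ gives
\[
H_n(\hat f_n^{\bar x}(s))=\hat f_n^{\bar x'}(s+(x_n'-x_n)).
\]
The right-hand side converges to $f^{\bar x'}(s)$ by uniform convergence $\hat f_n^{\bar x'}\to f^{\bar x'}$, continuity of $f^{\bar x'}$, and $x_n'-x_n\to 0$; the left-hand side converges to $H_\infty(f^{\bar x}(s))$ by uniform convergence $H_n\to H_\infty$ combined with $\hat f_n^{\bar x}(s)\to f^{\bar x}(s)$ and continuity of $H_\infty$. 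Hence $H_\infty|_{K(\bar x)}=f^{\bar x'}\circ(f^{\bar x})^{-1}$, which is a homeomorphism onto $K(\bar x')$ because $f^{\bar x}$ is a continuous injection from the compact $S^1$ onto $K(\bar x)$.

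It then remains to verify that $H_\infty$ is a self-homeomorphism of $T_1$ fixing $\partial T_1$. Because each $H_n$ bijects $\Im\hat g_n$ with $\Im\hat g_n'$ and hence the respective complements, this dichotomy passes to the limit: $H_\infty$ sends $T_1\setminus K(\bar x)$ into $T_1\setminus K(\bar x')$ and $K(\bar x)$ onto $K(\bar x')$. Injectivity off $K(\bar x)$ is immediate, since distinct $p,q$ eventually both lie outside some $\Im\hat g_M$ where $H_\infty=H_M$ is injective; surjectivity onto $T_1\setminus K(\bar x')$ is dual, as any $q\notin \Im\hat g_M'$ has $H_M^{-1}(q)\notin \Im\hat g_M$ satisfying $H_\infty(H_M^{-1}(q))=q$. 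Combined with the bijection on $K(\bar x)$ this produces a continuous bijection from the compact Hausdorff space $T_1$ to itself, hence a homeomorphism. Since $H_0=H_{\e_0}^{x_0'-x_0}$ is the identity on $\partial T_1$ and each subsequent modification is supported inside $\Im\hat g_{n-1}\subset T_{\l_{n-1}}\subset\intr T_1$, every $H_n$—and so $H_\infty$—fixes $\partial T_1$ pointwise, and extending by the identity on $S^3\setminus T_1$ gives the required self-homeomorphism of $S^3$. The main subtlety is the uniform convergence claim, specifically handling the ``intermediate'' regime $n\le N(x)\le n+k-2$ in the case analysis, which is precisely where the flexibility of Proposition~\ref{prop:anotherineq2} in the step size $k$ is essential.
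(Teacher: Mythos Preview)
Your argument is correct, and it takes a genuinely different route from the paper's proof. The paper does not show that $(H_n)$ converges uniformly; instead it works only with the pointwise limit $H_\infty$ on $T_1\setminus K(\bar x)$ (where the sequence stabilizes, so no hypothesis is needed), and then shows that $H_\infty$ extends continuously across $K(\bar x)$ by proving that it preserves the Cauchy property of sequences. That last step uses Proposition~\ref{prop:Cauchy} together with the estimate~\eqref{eq:ImportantEq}, which combines Propositions~\ref{prop:Hbilipschitz2} and~\ref{prop:anotherineq2}.

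Your approach bypasses Proposition~\ref{prop:Cauchy} and the path-metric estimate~\eqref{eq:ImportantEq} entirely: the case split on $N(x)$ lets you apply Proposition~\ref{prop:anotherineq2} alone to obtain a uniform Cauchy bound, and then the identification $H_\infty\!\rest\! K(\bar x)=f^{\bar x'}\circ(f^{\bar x})^{-1}$ via Proposition~\ref{prop:Hggpr} plus the compact-Hausdorff bijection argument finishes things off cleanly. This is more direct and arguably more transparent. The paper's organization, on the other hand, makes it visible that $H_\infty$ is already a homeomorphism of the complements \emph{without} the hypothesis $x_n'-x_n\to 0$, which is exactly what is exploited in the closing Remark about non-equivalent knots with homeomorphic complements; in your write-up that fact is present but less explicit, since your convergence argument invokes the hypothesis from the outset.
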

\begin{proof}
  It remains to show that $H_\infty$ which is a homeomorphism from the complement of
  $K(\bar x)$ to the complement of $K(\bar x')$ extends to the knots themselves.
  It will follow if we show that a sequence $(z_n)$ in the complement of $K(\bar x)$
  is Cauchy if and only if $(H_\infty(z_n))$ is Cauchy in the complement of $K(\bar x')$.
  The ``if'' part will follow by symmetry once we prove the ``only if'' part.
  
  So suppose $(z_n)$ is Cauchy in the complement of $K(\bar x)$ and
  fix $\e$. For each $n$ let $k(n)$ be the largest number such that $z_n\in \Im\hat g_{k(n)}$.
  Without loss of generality assume that $k(n)$ is strictly increasing in~$n$.
  Choose $n$ so large that for all $m>n$,
  $100d^{p(n)}(z_{n},z_m)<\e/3$ which exists by
  Proposition~\ref{prop:Cauchy}, so that 
  $100\sup_{m\ge n+1}|x'_m-x_m|<\e/3$ 
  which is possible by the convergence, and so
  that $100\e_{n+1}<\e/3$ which is possible by
  \eqref{sequences(g)}.  
  Since $z_n\in \Im \hat g_{k(n)}\setminus \Im\hat g_{k(n)+1}$ and $z_m\in \hat g_{k(n)+1}$ for $m>n$,
  by inequality \eqref{eq:ImportantEq} we have for all $m>n$
  \begin{eqnarray*}
    d(H_{\infty}(z_{n}),H_\infty(z_m))&\le& 100(d^{p(k(n))}(z_n,z_m) + \sup_{m\ge k(n)+1}|x_m'-x_m| + \e_{k(n)+1})\\
    &\le&100(d^{p(n)}(z_n,z_m) + \sup_{m\ge n+1}|x_m'-x_m| + \e_{n+1})\\
    &\le& \frac{\e}{3} + \frac{\e}{3} +\frac{\e}{3}\\
    &=& \e.
  \end{eqnarray*}
  The second inequality follows simply from the fact that $k(n)\ge n$.
\end{proof}

Recall the definition of $f^{\bar x}$, just prior to Proposition~\ref{prop:DefOfKnot}.

\begin{Prop}\label{prop:hasasac}
  Suppose $(x_{n(k)})_k$ is a convergent subsequence of $(x_n)$
  which converges to some $a$. Let $\e>0$ be arbitrarily small and
  let $\g$ be the connected component of $f^{\bar x}[S^1]\cap B(f^{\bar x}(a),\e)$
  with $f^{\bar x}(a)\in \g$. Then there exists
  $k$ such that for all $k'>k$, 
  $(B(f^{\bar x}(a),\e),\g)$ contains $K_{n(k')}$ strongly as a component.
  (Recall Definition~\ref{def:asacomponent}.)
\end{Prop}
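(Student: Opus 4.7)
The plan is to exploit the fact that each prime-knot pattern $K_{n(k)}$ is built into $\hat f_{n(k)}^{\bar x}$ near the parameter value $x_{n(k)}$, sitting in a ball of size $O(\l_{n(k)})$ centred at $\hat f_{n(k)-1}^{\bar x}(x_{n(k)})$. Since $x_{n(k)} \to a$ and $\hat f_m^{\bar x} \to f^{\bar x}$ uniformly, these centres converge to $f^{\bar x}(a)$, so for large $k$ the whole $K_{n(k)}$ piece lies inside $B(f^{\bar x}(a), \e)$. The wild-to-smooth transfer is then handled by Lemma~\ref{lemma:HerComp} using the smooth approximation $\hat f_{n(k)}^{\bar x}$ as a witness.

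\textbf{Localisation.} Set $I_k = \overline{B_{S^1}(x_{n(k)}, \l_{n(k)})}$. From formula~\eqref{eq:rotationform} together with properties~\eqref{sequences(b)}--\eqref{sequences(c)}, the restriction $\hat f_{n(k)}^{\bar x}|_{I_k}$ is the image, under the homeomorphism $\hat g_{n(k)-1}^{\bar x} \circ \theta^{x_{n(k)}}$, of the proper arc $(B_{T_{\e_{n(k)}}}(\bar 0, \l_{n(k)}), f_{n(k)} \cap B_{T_{\e_{n(k)}}}(\bar 0, \l_{n(k)}))$ of knot type $K_{n(k)}$. Let $\bar B_k$ denote the image of $\bar B_{T_{\e_{n(k)}}}(\bar 0, \l_{n(k)})$ under the same homeomorphism; then $(\bar B_k, \hat f_{n(k)}^{\bar x}|_{I_k})$ is a proper arc equivalent to $K_{n(k)}$, and since by~\eqref{sequences(b)} $f_{n(k)}$ is the identity outside $B_{S^1}(0, \l_{n(k)})$ one also verifies that $\hat f_{n(k)}^{\bar x}[S^1] \cap \bar B_k = \hat f_{n(k)}^{\bar x}[I_k]$. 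Because $\hat g_{n(k)-1}^{\bar x}$ is $5$-Lipschitz (Proposition~\ref{prop:IsLip}), $\bar B_k$ has diameter at most $10\l_{n(k)}$ and contains the centre $\hat f_{n(k)-1}^{\bar x}(x_{n(k)})$. Combining $x_{n(k)} \to a$, continuity of $f^{\bar x}$, the uniform bound $d(\hat f_m^{\bar x}, f^{\bar x}) \le 10\e_{m+1}$ established in the proof of Proposition~\ref{prop:DefOfKnot}, and $\l_{n(k)} \to 0$, we obtain $\bar B_k \subset \bar B := \bar B(f^{\bar x}(a), \e)$ for all sufficiently large $k$.

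\textbf{Smooth witness.} Let $J_k$ be the connected component of $(\hat f_{n(k)}^{\bar x})^{-1}(\bar B)$ containing $a$; for large $k$, $I_k \subset J_k$ and $\alpha_k := \hat f_{n(k)}^{\bar x}|_{J_k}$ is a smooth proper arc in $\bar B$ with $(\bar B_k, \hat f_{n(k)}^{\bar x}|_{I_k})$ as a component, so $(\bar B, \alpha_k)$ has $K_{n(k)}$ as a component in the sense of Definition~\ref{def:PropArc}. Pick $\e' > 0$ tubular for $\alpha_k$ (Theorem~\ref{thm:Tub}); by Lemma~\ref{lemma:asacomponent} every tame proper arc in $\bar B$ within $\e'$ of $\alpha_k$ has $\alpha_k$ as a component, and hence, by transitivity of ``has as a component'', also $K_{n(k)}$. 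Therefore $\alpha_k$ has $K_{n(k)}$ strongly as a component.

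\textbf{Transfer to $\g$.} Parametrise both $\g$ and $\alpha_k$ as proper arcs $I \to \bar B$ on a common interval $I$. For $k$ large the parametrisations can be matched so that $d_{C(I, \bar B)}(\g, \alpha_k) \le d(\hat f_{n(k)}^{\bar x}, f^{\bar x}) + o(1) \le 10\e_{n(k)+1} + o(1) < \e'/2$; Lemma~\ref{lemma:HerComp} then gives that $\g$ has $K_{n(k)}$ strongly as a component in $\bar B$, as required. The main technical nuisance is precisely this matching of parametrisations: the two defining intervals (the component $J_k$ for $\alpha_k$ and the analogous $J_\infty$ for $\g$) need to be reconciled at the endpoints where the arcs meet $\partial \bar B$, but this is routine given the uniform convergence $\hat f_m^{\bar x} \to f^{\bar x}$ and the continuity of $f^{\bar x}$.
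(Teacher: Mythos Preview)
Your overall strategy---localise the $K_{n(k)}$ pattern near $\hat f^{\bar x}_{n(k)-1}(x_{n(k)})$, show it lands in $B(f^{\bar x}(a),\e)$ for large $k$, and then transfer ``strongly as a component'' from the smooth approximation to $\g$ via Lemma~\ref{lemma:HerComp}---is the same as the paper's. The gap is in the \emph{Transfer} step: your tubular radius $\e'$ is not under control.

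You pick $\e'$ tubular for $\alpha_k$ by appealing to Theorem~\ref{thm:Tub}, and then assert $d_{C(I,\bar B)}(\g,\alpha_k)\le 10\e_{n(k)+1}+o(1)<\e'/2$. But $\e'$ depends on $k$, and since the entire $K_{n(k)}$ pattern in $\alpha_k$ lives inside a ball of diameter $\le 10\l_{n(k)}<5\e_{n(k)+1}$, the tubular radius of $\alpha_k$ is forced to be of order at most $\e_{n(k)+1}$---the \emph{same} order as the distance $10\e_{n(k)+1}$ you need to beat. An arbitrary tubular $\e'$ from Theorem~\ref{thm:Tub} gives no reason for the factor-of-two inequality to hold; this is exactly the borderline case. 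The paper handles this by invoking clause~\eqref{sequences(h)} of Proposition~\ref{prop:SequencesOfEmbeddings} explicitly: $20\e_{n+1}$ is tubular for $f_n$, so (via Lemmas~\ref{lemma:asacomponent} and~\ref{lemma:invarianceofcomp}) the strong-component witness for $\hat f^{\bar x}_{n(k')}$ in the small ball $\hat g^{\bar x}_{n-1}[B(x_{n(k')},\l_{n(k')})]$ is $20\e_{n+1}$, and then $d(f^{\bar x},\hat f^{\bar x}_{n(k')})<10\e_{n+1}=(20\e_{n+1})/2$ is precisely what Lemma~\ref{lemma:HerComp} requires. You never use~\eqref{sequences(h)}, and without it the argument does not close.

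A secondary point: working in the fixed big ball $\bar B$ forces you to match the parametrisation intervals $J_k$ and $J_\infty$, which have different endpoints on $\partial\bar B$; you call this ``routine'' but it interacts badly with the already-tight $\e'$ budget. The paper avoids this by working in the small image ball $\hat g^{\bar x}_{n-1}[B(x_{n(k')},\l_{n(k')})]$, where both arcs are naturally parametrised by the same interval $I_{k'}\subset S^1$ and the sup-distance is directly the global $d(f^{\bar x},\hat f^{\bar x}_{n(k')})$.
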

\begin{proof}
  For simplicity of notation denote $n_k=n(k)$.
  Let $k$ be so big that $5(d(x_{n(k)},a)+\l_{n(k)})<\e/2$.
  Let $k'> k$. Now 
  $$\theta^{x_{n(k')}}f_{n(k')}^{\bar x}\theta^{-x_{n(k')}}$$
  has $K_{n(k')}$ (strongly) as a component in $B(x_{n(k')},\l_{n(k')})$ by~\eqref{sequences(c)}
  and so
  $$\hat f^{\bar x}_{n(k')}=\hat g^{\bar x}_{n-1}\theta^{x_{n(k')}}f_{n(k')}^{\bar x}\theta^{-x_{n(k')}}$$
  contains $K_{n(k')}$ strongly as a component inside 
  $$\hat g_{n-1}^{\bar x}[B(x_{n(k')},\l_{n(k')})]$$
  and this is witnessed by $20\e_{n+1}$ by \eqref{sequences(h)} and Lemmas~\ref{lemma:asacomponent} 
  and~\ref{lemma:invarianceofcomp}.
  On the other hand $d(f^{\bar x},\hat f^{\bar x}_{n(k')})<10\e_{n+1}\le (20\e_{n+1})/2$
  which by Lemma~\ref{lemma:HerComp} means that $f^{\bar x}$ has $K_{n(k')}$ as a component in
  $$\hat g_{n-1}^{\bar x}[B(x_{n(k')},\l_{n(k')})]$$ as well.
  On the other hand, by the choice of $k$, 
  \begin{equation*}
    \hat g_{n-1}^{\bar x}[B(x_{n(k')},\l_{n(k')})]\subset B(f^{\bar x}(a),\e).\qedhere
  \end{equation*}
\end{proof}

\begin{Prop}\label{prop:hasnottasac}
  Suppose $a,a'\in S^1$, $n\in\N$ and $B\subset S^3$ are such that $B$ is
  a set homeomorphic to
  a closed ball with $f^{\bar x}(a)$ in the interior
  and $f^{\bar x}(a')$ in the exterior.
  Suppose $(x_{n(k)})$ is a subsequence converging to $a'$.
  Let $\g$ be the connected component
  of $f^{\bar x}\cap B$ which contains $f^{\bar x}(a)$. Then
  there exists $k$ such that for all $k'>k$, 
  $(B,\g)$ does not contain $K_{n(k')}$ strongly as a component.
\end{Prop}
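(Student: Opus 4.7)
The plan is to verify the negation of Definition~\ref{def:asacomponent} directly: given any $\e>0$ I will exhibit a tame proper arc $f_1 \in B_{C(I,B)}(\g,\e)$ that does not have $K_{n(k')}$ as a component. The geometric intuition is that the $K_{n(k')}$-knotted portion of $f^{\bar x}$ is concentrated in a small neighborhood of $f^{\bar x}(x_{n(k')})$, which converges to $f^{\bar x}(a')$ as $k'\to\infty$ and therefore eventually lies outside~$B$; hence any sufficiently fine tame approximation of $\g$ inside $B$ cannot detect the $K_{n(k')}$-knotting.

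First I would pick $\d > 0$ small enough that $\bar B_{S^3}(f^{\bar x}(a'),\d) \cap B = \es$, which exists because $f^{\bar x}(a')$ lies in the open exterior of~$B$. Using $x_{n(k)} \to a'$, continuity of $f^{\bar x}$, the $5$-Lipschitz bound of Proposition~\ref{prop:IsLip}, and the decay $\l_m \to 0$, I would choose $k$ so large that for every $k' > k$ the region $R_{k'}$ containing the $K_{n(k')}$-knotted portion of $\hat f^{\bar x}_N$ for all $N \ge n(k')$ --- essentially the image under $\hat g^{\bar x}_{n(k')-1}\circ \theta^{x_{n(k')}}$ of $B_{T_{\e_{n(k')}}}(\bar 0,\l_{n(k')})$, which by \eqref{sequences(b)} and~\eqref{sequences(c)} is contained in a neighborhood of $\hat f^{\bar x}_{n(k')-1}(x_{n(k')})$ of radius $O(\l_{n(k')})$ --- lies inside $B_{S^3}(f^{\bar x}(a'),\d)$ and is therefore disjoint from~$B$.

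Next, for a given $\e>0$, I would use the Cauchy estimate from the proof of Proposition~\ref{prop:DefOfKnot} to choose $N \ge n(k')$ with $d(\hat f^{\bar x}_N, f^{\bar x}) < \e/3$, and let $\g_N$ denote the component of $\hat f^{\bar x}_N \cap B$ tracking $\g$ (i.e.\ parametrized by the same sub-interval of $S^1$). This $\g_N$ is a tame proper arc in $B$, within $\e$ of $\g$ in $C(I,B)$.

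The core claim is that $\g_N$ does not have $K_{n(k')}$ as a component. The construction introduces the prime knot $K_{n(k')}$ only at level $n(k')$, through the factor $g_{n(k')}$, and its $K_{n(k')}$-knotted arc lives entirely in $R_{k'}$; higher-level factors $g_m$ with $m > n(k')$ insert knots $K_m$ which by injectivity of the enumeration $(K_n)$ are distinct from $K_{n(k')}$. Since $R_{k'}\cap B = \es$, the unique $K_{n(k')}$-summand is severed from $\g_N$, so $\g_N$ is equivalent to a tame arc whose prime decomposition involves only $K_m$ with $m\ne n(k')$ (together with trivial pieces). By Schubert's uniqueness of prime decomposition, $K_{n(k')}$ then cannot occur as a component of $\g_N$, since any such occurrence would force $K_{n(k')}$ into the prime factorization.

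The main obstacle will be the geometric tracking in the second paragraph: verifying that the full $K_{n(k')}$-knotted region of $\hat f^{\bar x}_N$, after being propagated through all intervening compositions $g_0,\dots,g_{n(k')-1}$ and rotations $\theta^{x_m}$, really does remain within a ball of radius $\d$ about $f^{\bar x}(a')$. This reduces to a quantitative Lipschitz estimate combining Proposition~\ref{prop:IsLip}, the convergence $x_{n(k)}\to a'$, and the rapid decay $\l_m\to 0$. A secondary subtlety is confirming that the refinements coming from levels $N > n(k')$, once intersected with $B$, cannot accidentally reassemble a $K_{n(k')}$-summand --- which follows from the fact that each additional level inserts only $K_m$ for $m > n(k')$ and from the primeness of $K_{n(k')}$.
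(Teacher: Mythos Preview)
Your approach is sound and would succeed, but it is a genuinely different route from the paper's. The paper argues by contradiction through Proposition~\ref{prop:hasasac}: it picks a second closed ball $B'$ around $f^{\bar x}(a')$ disjoint from $B$, observes by Proposition~\ref{prop:hasasac} that $(B',\g')$ \emph{does} have $K_{n(k')}$ strongly as a component for large $k'$, and then notes that if $(B,\g)$ did too, the full knot $f^{\bar x}$ would have $K_{n(k')}\# K_{n(k')}$ strongly as a component --- impossible because the tame approximants $\hat f^{\bar x}_m$ converge to $f^{\bar x}$ and (by Schubert, implicitly) never contain a double $K_{n(k')}$ summand. You instead verify the negation of Definition~\ref{def:asacomponent} directly by exhibiting, for each $\e$, a tame arc $\g_N$ close to $\g$ that omits $K_{n(k')}$; the Schubert step is essentially the same, just applied at a different stage.

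The paper's packaging buys two things. First, it recycles Proposition~\ref{prop:hasasac} rather than redoing its geometric content. Second, it sidesteps the endpoint bookkeeping you flag implicitly: your $\g_N$ must be a \emph{proper} arc in $B$ lying in $B_{C(I,B)}(\g,\e)$, but $\hat f^{\bar x}_N\!\rest\! I$ need not have endpoints on $\partial B$ nor stay inside $B$, so you will have to perturb near $\partial I$ and argue that this perturbation does not affect the prime decomposition. That is routine but adds work. What your approach buys is self-containment --- you do not need the ``strongly as a component'' formalism to combine two disjoint balls into a $K_{n(k')}\# K_{n(k')}$ statement, which the paper waves through as ``easy to see''. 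Your identification of the two obstacles (Lipschitz tracking of $R_{k'}$ to a neighborhood of $f^{\bar x}(a')$, and ruling out accidental reassembly of $K_{n(k')}$ at higher levels) is accurate, and both are resolvable exactly as you indicate.
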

\begin{proof}
  Since $B$ is closed, there is $B'$, also homeomorphic to a closed ball
  which contains $f^{\bar x}(x_n)$ in the interior and disjoint from $B$. 
  Let $\g'$ be the connected component of $f^{\bar x}\cap B'$ which 
  contains $f^{\bar x}(a')$. Then
  Let $k$ be as given
  by Proposition~\ref{prop:hasasac}, i.e. for all $k'>k$, $(B',\g')$ contains
  $K_{n(k')}$ strongly as a component. If now also $(B,\g)$ contains
  $K_{n(k')}$ strongly as a component, then it is easy to see from the definition of 
  strong components that $f^{\bar x}$ contains $K_{n(k')}\# K_{n(k')}$ strongly as a
  component. But this is impossible because 
  $d(f^{\bar x},\hat f^{\bar x}_m)\stackrel{m\to \infty}{\longrightarrow}0$
  and $\hat f^{\bar x}_m$ does not contain $K_{n(k')}\# K_{n(k')}$ strongly as a
  component for any~$m$.
\end{proof}

\begin{Prop}\label{prop:NonEquivalentKnots}
  Suppose $(x_n')$ and $(x_n)$ are such that there exists $A\subset \N$ so that
  for all $n\in A$ $x'_n=x_n$ and $\{x'_n\mid n\in A\}=\{x_n\mid n\in A\}$ 
  is dense.
  Then, if $\lim_{n\to\infty}(x_n'-x_n)$ does not exist, then
  the knots $K(\bar x)$ and $K(\bar x')$ are not equivalent.
\end{Prop}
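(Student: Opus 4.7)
The plan is to prove the contrapositive. Assume there is a homeomorphism $h \colon S^3 \to S^3$ with $h[K(\bar x)] = K(\bar x')$. Because $K(\bar x) = f^{\bar x}[S^1]$ and $K(\bar x') = f^{\bar x'}[S^1]$ are injective continuous images of $S^1$ by Proposition~\ref{prop:DefOfKnot}, $h$ induces a self-homeomorphism
\[\phi = (f^{\bar x'})^{-1} \circ h \circ f^{\bar x} \colon S^1 \to S^1.\]
I will prove a local-invariance claim relating convergent subsequences of $(x_n)$ and $(x'_n)$ through $\phi$, use the density hypothesis to deduce $\phi = \mathrm{id}_{S^1}$, and then combine this with compactness to conclude $\lim_n(x'_n - x_n) = 0$, contradicting the hypothesis.

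The central claim is: \emph{whenever $(n(k))_k$ is a subsequence with $x_{n(k)} \to a$ and $x'_{n(k)} \to a'$ in $S^1$, one has $\phi(a) = a'$.} I argue by contradiction. Suppose $\phi(a) \neq a'$. Pick $\epsilon > 0$ so small that $h^{-1}(f^{\bar x'}(a')) = f^{\bar x}(\phi^{-1}(a'))$ lies outside $\bar B(f^{\bar x}(a),\epsilon)$, and set $B' = h[\bar B(f^{\bar x}(a),\epsilon)]$. Then $B'$ is a subset of $S^3$ homeomorphic to a closed ball, with $f^{\bar x'}(\phi(a))$ in its interior and $f^{\bar x'}(a')$ in its exterior. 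By Proposition~\ref{prop:hasasac} applied to $\bar x$, for all sufficiently large $k$ the connected component $\gamma_k$ of $f^{\bar x} \cap \bar B(f^{\bar x}(a),\epsilon)$ containing $f^{\bar x}(a)$ has $K_{n(k)}$ strongly as a component. Lemma~\ref{lemma:invarianceofcomp}, applied to $h$ restricted to $\bar B(f^{\bar x}(a),\epsilon)$, transports this property to the pair $(B', h[\gamma_k])$. Since $h$ is a homeomorphism, $h[\gamma_k]$ is precisely the component of $f^{\bar x'} \cap B'$ containing $f^{\bar x'}(\phi(a))$. But Proposition~\ref{prop:hasnottasac} applied to $\bar x'$, to the ball $B'$, to the interior/exterior pair $\phi(a),a' \in S^1$, and to the subsequence $(x'_{n(k)}) \to a'$, asserts that $K_{n(k)}$ eventually fails to be a strong component of this very pair---a contradiction.

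The density hypothesis now forces $\phi = \mathrm{id}_{S^1}$: for any $p \in S^1$, density of $\{x_n : n \in A\}$ provides a subsequence $(n(k))$ with $n(k) \in A$ and $x_{n(k)} \to p$; since $x'_{n(k)} = x_{n(k)}$ for $n(k) \in A$, one has $x'_{n(k)} \to p$ as well, and the central claim gives $\phi(p) = p$. Finally, if $\lim_n(x'_n - x_n)$ did not exist, compactness of $S^1$ would produce a subsequence along which $x'_n - x_n$ converges to some $\alpha \neq 0$ (otherwise every convergent subsequence has limit $0$, which already forces the full sequence to converge to $0$). Extracting a further subsequence with $x_{n(k)} \to a$ for some $a \in S^1$ yields $x'_{n(k)} \to a + \alpha$, and the central claim forces $\phi(a) = a + \alpha$, contradicting $\phi = \mathrm{id}_{S^1}$. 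The main delicate point is the transfer step inside the central claim: one must verify both that Lemma~\ref{lemma:invarianceofcomp} is applicable to the conjugation by $h$ and that Proposition~\ref{prop:hasnottasac} genuinely accepts a ball $B'$ that is only homeomorphic (rather than isometric) to a round ball.
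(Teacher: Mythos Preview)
Your argument is correct and uses the same ingredients as the paper (Propositions~\ref{prop:hasasac} and~\ref{prop:hasnottasac} together with Lemma~\ref{lemma:invarianceofcomp}), but it is organised differently. The paper first fixes a subsequence with $x'_{n(k)}-x_{n(k)}\to z\ne 0$, extracts $a,a'$ with $a'-a=z$, and then splits into the two cases $h(f^{\bar x}(a))=f^{\bar x'}(a')$ and $h(f^{\bar x}(a))\ne f^{\bar x'}(a')$, handling each by an ad hoc choice of subsequence. Your version abstracts this into a single statement: introducing the induced circle homeomorphism $\phi=(f^{\bar x'})^{-1}\circ h\circ f^{\bar x}$, you prove once that any simultaneous limits $x_{n(k)}\to a$, $x'_{n(k)}\to a'$ force $\phi(a)=a'$, and then apply this twice (with $A$-indexed subsequences to get $\phi=\id$, and with a subsequence witnessing a nonzero cluster point of $x'_n-x_n$ to get $\phi\ne\id$). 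This is a cleaner packaging: the paper's Case~2 is exactly your proof of the central claim, while Case~1 is the special instance that your density step handles uniformly. Your closing remark is also on point: Proposition~\ref{prop:hasnottasac} is stated for sets merely homeomorphic to closed balls, so the transfer via $h$ is legitimate.
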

\begin{proof}
  The set $\{(x'_n-x_n)\mid n\notin A\}$ must have at least one cluster point
  by the compactness of $S^1$ and at least one cluster point that is not equal
  to $0$, because otherwise $(x'_n-x_n)$ would converge to zero.
  Let $(n(k))_k$  be an increasing sequence of natural numbers so that
  $z=\lim_{k\to\infty}(x'_{n(k)}-x_{n(k)})$ exists and $z\ne 0$.
  Further, take a subsequence of this $(n(k(j)))_j$ such that
  $(x_{n(k(j))})_j$ and $(x'_{n(k(j))})_j$ converge to points $a, a'\in S^1$
  respectively. These points must be distinct, in fact $a'-a=z$. 
  We will now arrive at a contradiction from the assumption that
  there is a homeomorphism $h\colon S^3\to S^3$ with
  $h[K(\bar x)]=K(\bar x')$. 
  There are two cases:
  \begin{description}
  \item[Case 1] $h(f^{\bar x}(a))=f^{\bar x'}(a')$.
    Let $\e$ be so small that 
    $$B(f^{\bar x}(a),\e)\cap B(f^{\bar x}(a'),\e)=\es$$
    and
    $$B(f^{\bar x'}(a),\e)\cap h[B(f^{\bar x}(a),\e)]=\es.$$
    The last is possible because 
    $h(f^{\bar x}(a))=f^{\bar x'}(a')\ne f^{\bar x'}(a).$
    Since $\{x_m\mid m\in A\}$ is dense, 
    pick a subsequence $(x_{m(k)})$ 
    converging to $a$.
    Let $\g$ and $\g'$ be the connected components of 
    $f^{\bar x}\cap B(f^{\bar x}(a),\e)$
    and 
    $f^{\bar x'}\cap h[B(f^{\bar x}(a),\e)]$
    containing $f^{\bar x}(a)$ and $f^{\bar x'}(a')$
    respectively. Note that in fact $h[\g]=\g'$.
    By Proposition~\ref{prop:hasasac} there is $k\in A$ such that 
    $(B(f^{\bar x}(a),\e),\g)$
    has $K_{m(k_1)}$ strongly as a component for all $k_1>k$
    and by Proposition~\ref{prop:hasnottasac} there is $k'$
    such that
    $(h[B(f^{\bar x}(a),\e)],\g')$ does not have $K_{m(k_2)}$
    strongly as a component for any $k_2>k'$. By taking $k_3>\max\{k,k'\}$
    this contradicts Lemma~\ref{lemma:invarianceofcomp}.
  \item[Case 2] $h(f^{\bar x}(a))\ne f^{\bar x'}(a')$. 
    This is similar to Case 1, but it is also presented here for the sake of completeness.
    Let $\e$ be so small that 
    $$B(f^{\bar x}(a),\e)\cap B(f^{\bar x}(a'),\e)=\es$$
    and
    $$B(f^{\bar x'}(a'),\e)\cap h[B(f^{\bar x}(a),\e)]=\es.$$
    Let $\g$ and $\g'$ be the connected components of 
    $f^{\bar x}\cap B(f^{\bar x}(a),\e)$
    and 
    $f^{\bar x'}\cap h[B(f^{\bar x}(a),\e)]$
    containing $f^{\bar x}(a)$ and $f^{\bar x'}(a')$
    respectively. 
    By the above, $(x_{n(k(j))})$ converges to $a$, so
    by Proposition~\ref{prop:hasasac} there exists $j_0$ such that for all $j>j_0$,
    $(B(f^{\bar x}(a),\e),\g)$ contains $K_{n(k(j))}$ strongly as a component.
    On the other hand $(x'_{n(k(j))})$ converges to $a'$ which means by
    Proposition~\ref{prop:hasnottasac} that there is $j_1$ such that for all $j>j_1$
    $(h[B(f^{\bar x}(a),\e)],\g')$ does \emph{not} contain  $K_{n(k(j))}$ 
    strongly as a component. This is again a contradiction with
    Lemma~\ref{lemma:invarianceofcomp}. \qedhere
  \end{description}
\end{proof}

Now we are ready to prove the main theorem.

\begin{proof}[Proof of Theorem \ref{thm:Main}]\label{page:proofofmain}
  Fix a dense $(z_n)\subset S^1$. Let $(x_n)\in (S^1)^{\N}$ and define
  $(y_n)$ by $y_{2k}=z_k$ and $y_{2k+1}=x_k$ for all $k$. Then let
  $F(\bar x)=K(\bar y)$ as defined by \eqref{eq:defknot}. 
  Suppose $(x_n)$ and $(x_n')$ are $E^*$-equivalent sequences.
  Let $(y_n)$ and $(y_n')$ be the corresponding sequences as above.
  Now $(x'_n-x_n)\stackrel{n\to \infty}{\longrightarrow}0$ and of course also
  $(y_n'-y_n)\stackrel{n\to \infty}{\longrightarrow}0$.
  So by Proposition~\ref{prop:EquivalentKnots} $K(\bar y_n')$ and 
  $K(\bar y_n)$ are equivalent.
  Suppose that $(x_n'-x_n)$ does not converge to zero. Then
  $(y_n'-y_n)$ does not converge at all. Taking the even numbers as $A$,
  Proposition~\ref{prop:NonEquivalentKnots} implies that $K(\bar y'_n)$ and $K(\bar y_n)$
  are not equivalent.

  Let us show that the reduction is continuous. 
  Let $\bar x$ be a sequence and $\e>0$ and let us find a neighborhood $U$ of $\bar x$
  such that for all $\bar x'\in U$ we have $d(K(\bar x),K(\bar x'))<\e$.
  Let $k$ be so big that $21\e_k<\e$. For $n\le k$ let
  Let
  \begin{equation}
    \d_n=\frac{\e_k}{3\cdot 2^{n}(k+1)}.\label{eq:defofdelta}
  \end{equation}
  Let $U$ be the neighborhood 
  $$U=\prod_{n=0}^k B_{S^1}(x_n,\d_n)\times \prod_{n> k} S^1.$$
  For $m<n$ denote $\hat g^{\bar x}_{m,n}=\comp_{i=m}^{n}\theta^{x_i}g_i \theta^{-x_i}(z)$
  (cf.~\ref{eq:rotationform}).
  Now for all $z\in T_{\e_{k+1}}$ and $\bar x'\in U$ we have
  \begin{eqnarray*}
    d(\hat g^{\bar x}_k(z),\,\hat g^{\bar x'}_k(z))
    &=&d\big((\theta^{x_0}  g_0\theta^{-x_0} \circ \hat g^{\bar x}_{1,k}) (z),\,
             (\theta^{x'_0} g_0\theta^{-x'_0}\circ \hat g^{\bar x'}_{1,k})(z)\big)\\
    &=&d\big((\theta^{x'_0-x_0}  g_0\theta^{-x_0} \circ \hat g^{\bar x}_{1,k})(z),\,
                                (g_0\theta^{-x'_0}\circ \hat g^{\bar x'}_{1,k})(z)\big)\\
    &\le&|x_0'-x_0|+d\big((g_0\theta^{-x_0} \circ \hat g^{\bar x}_{1,k})(z),\,
                          (g_0\theta^{-x'_0}\circ \hat g^{\bar x'}_{1,k})(z)\big).
  \end{eqnarray*}
  The last inequality holds for a similar reason as~\eqref{eq:HomeoLip}. The first equality
  is by the definition \eqref{eq:rotationform} and the second follows from that $\theta^{s}$ is
  an isometry. Continuing the chain of inequalities, using the facts that
  $|x_0'-x_0|<\d_0$ and $g_0$ is $L_0$-Lipschitz and $L_0<2$, (by \eqref{sequences(g)}, 
  \eqref{sequences(e)}, Proposition~\ref{prop:BilipPathMetric} and the discussion
  after it), we have
  \begin{eqnarray*}
    &\le&\d_0+2d\big((\theta^{-x_0} \circ \hat g^{\bar x}_{1,k})(z),\,
                     (\theta^{-x'_0}\circ \hat g^{\bar x'}_{1,k})(z)\big)\\
    &=&\d_0+2d\big((\theta^{x_0'-x_0} \circ \hat g^{\bar x}_{1,k})(z),\,
                                           \hat g^{\bar x'}_{1,k}(z)\big)\\
    &\le&\d_0+2(\d_0+d\big(\hat g^{\bar x}_{1,k}(z)\,
                           \hat g^{\bar x'}_{1,k}(z))\big)\\
    &=&3\d_0+2d\big(\hat g^{\bar x}_{1,k}(z),\,
                    \hat g^{\bar x'}_{1,k}(z))\big).
  \end{eqnarray*}
  Continuing by induction in the exact same way we get
  \begin{eqnarray*}
    &=&3\d_0+2d(\hat g^{\bar x}_{1,k}(z),
                \hat g^{\bar x'}_{1,k}(z)))\\
    &\le&3\d_0+2(3\d_1 + 2d(\hat g^{\bar x}_{2,k}(z),
                         \hat g^{\bar x'}_{2,k}(z)))\\
    &\le&3\d_0+3\cdot 2\d_1 + 2^2d(\hat g^{\bar x}_{2,k}(z),
                               \hat g^{\bar x'}_{2,k}(z)))\\
    &\vdots& \vdots \\
    &\le &3\d_0+3\cdot 2\d_1 + 3\cdot 2^2\d_2 +\cdots +3\cdot 2^{k}\d_k\\
    &=&\sum_{n=0}^k 3\cdot 2^n \d_n.
  \end{eqnarray*}
  By the definition of $\d_n$, \eqref{eq:defofdelta}, this equals to 
  $$\sum_{n=0}^k \frac{\e_k}{k+1}=\e_k.$$
  Thus, we get that
  for all $z\in T_{\e_{k+1}}$ and $\bar x'\in U$ we have
  $$d(\hat g^{\bar x}_k(z),\,\hat g^{\bar x'}_k(z))\le \e_k$$
  and in particular for all $s\in S^1$ and and $\bar x'\in U$ we have
  $$d(\hat f^{\bar x}_k(s),\hat f^{\bar x'}_k(s))\le \e_k.$$  
  From the proof of Proposition~\ref{prop:DefOfKnot}, we know that in the sup-metric
  $$d(\hat f^{\bar x}_k,f^{\bar x})\le 10\e_{k}\ \text{ and }\ d(\hat f^{\bar x'}_k,f^{\bar x'})\le 10\e_{k}$$
  so from the above and the choice of $k$, we have 
  $$d(f^{\bar x},f^{\bar x'})\le 21\e_{k}<\e$$
  in the sup metric. But this of course implies that the Hausdorff distance of
  $K(\bar x)=\Im (f^{\bar x})$ and $K(\bar x')=\Im (f^{\bar x'})$ is at most $\e$
  as well. Since the function $\bar x\mapsto \bar y$ in the definition of $F(\bar x)$
  is continuous on $(S^1)^\N$, the reduction $F$ is continuous.
\end{proof}

\begin{Remark}
  As remarked after \eqref{eq:term}, all
  knots in the above construction have homeomorphic complements. This
  strengthens the side remark of \cite{Nanyes} that there are
  uncountably many non-equivalent knots with homeomorphic complements
  to that there is a \emph{non-classifiable} uncountable set of knots
  all of which have homeomorphic complements.
\end{Remark}

\bibliography{ref1}{}
\bibliographystyle{alpha}

\end{document}